\def\NAT@def@citea{\def\@citea{\NAT@separator}}
\newtheorem{Theorem}{Theorem}[section]
\newtheorem{Proposition}{Proposition}[section]
\newtheorem{Lemma} {Lemma}[section]
\newtheorem{Remark}{Remark}[section]
\newtheorem{Definition} {{Definition}}[section]
\def\R{\mathbb{R} }
\def\to{\rightarrow}
\def\phi{{\varphi} }
\def\WW{W_{0,\omega,\omega_{1}}^{1}L^{\Phi,\Psi}\left(\Omega\right)}
\def\W{W_{\omega,\omega_{1}}^{1}L^{\Phi,\Psi}\left(\Omega\right)}
\begin{document}

\articletype{ARTICLE}

\title{ Multiplicity of Solutions for a problem in double weighted Orlicz-Sobolev and its spectrum}

\author{
	\name{Abderrahmane Lakhdari\textsuperscript{a}\thanks{CONTACT Abderrahmane Lakhdari Email: lakhdari.abderrahmane@fst.utm.tn
		} and Nedra Belhaj Rhouma\textsuperscript{a}}
	\affil{\textsuperscript{a}Laboratory of Mathematical Analysis and Applications (LMAA-LR11-ES11), Faculty of
		Mathematical, Physical and Natural Sciences of Tunis, University of Tunis El-Manar, 2092 Tunis,
		Tunisia}
}

\maketitle

\begin{abstract}
The purpose of this paper is to investigate the existence of three different weak solutions to a nonlinear elliptic problem that is governed by the weighted $\varphi-$Laplacian operator and subjected to Dirichlet boundary conditions. We also examine the presence of sequences of variational eigenvalues in two distinct scenarios, one with and one without assuming the $\Delta_2-$condition. Our main results are obtained through technical proofs that combine a Lagrange multipliers type approach with the Ljusternik–Schnirelmann argument.
\end{abstract}

\begin{keywords}
Double weighted Orlicz-sobolev spaces; eigenvalues problem; $\Delta_2-$condition; Ljusternik–Schnirelmann theory
\end{keywords}

\section{Introduction}

Let $\Omega$ be a bounded Lipschitz domain of $\R^{N}$, $N\geq2$, we consider the following problem:

\begin{equation}\label{mainproblem0}
	\begin{cases}
		-\operatorname{div}\left(\omega(x)\frac{\varphi\left(\left|\nabla u\right| \right)}{\left|\nabla u \right| }\nabla u\right) =\lambda\omega_{1}(x)\psi(\left|u\right|)\frac{u}{\left|u\right|} & \text { in } \Omega,\\
		u=0 & \text { on }  \partial\Omega,
	\end{cases}
\end{equation}

where, $\varphi$ and $\psi$ are the derivatives of two Young functions $\Phi$ and $\Psi$. The weights $\omega(x)\geq1$ and $\omega_{1}(x)\geq1$ are respectively in \textbf{$L^{\tilde{\Psi}}(\Omega)$} and \textbf{$L^{\tilde{\Phi}}(\Omega)$}. A particular case of (\ref{mainproblem0}) can be obtained, when $\omega=\omega_{1}=1$ and $\psi=\varphi=g$. This considiration leads to the $g$-laplacian problem 
\begin{equation}\label{bahrouni}
	\begin{cases}
		-\Delta_{g}u =\lambda g(\left|u\right|)\frac{u}{\left|u\right|} & \text { in } \Omega,\\
		u=0 & \text { on }  \partial\Omega,
	\end{cases}
\end{equation}
where $\Delta_{g}u=\operatorname{div}\left(\frac{g\left(\left|\nabla u\right| \right)}{\left|\nabla u \right| }\nabla u\right)$ (see \cite{salort2022fractional}). The prototepical p(x)-laplacian case can be obtain by setting $\varphi(t)=t^{p(x)-1}$ and $\psi(t)=t^{q(x)-1}$. This lead to following problem 
\begin{equation}\label{p(x)double}
	\begin{cases}
		\Delta_{p(x)}u =\lambda\omega_{1}(x)\left| u \right|^{q(x)-2} u & \text { in } \Omega,\\
		u=0 & \text { on }  \partial\Omega,
	\end{cases}
\end{equation}
where $\Delta_{p(x)}=\operatorname{div}\left(\omega(x)\left|\nabla u \right|^{p(x)-2}\nabla u\right)$ (see \cite{unal2021compact}). More particulary if $p(x)=q(x)=p$ the operator apeared in (\ref{p(x)double}) becomes a $p$-Laplacian operator $\Delta_p u=\operatorname{div}\left(\omega(x)\left|\nabla u \right|^{p-2}\nabla u\right)$. 
\begin{Definition}
	We say that  $u\in W_{0,\omega,\omega_{1}}^{1}L^{\Phi,\Psi}\left(\Omega\right)$ is a weak solution of problem \eqref{mainproblem0} if $\forall\ v\in W_{0,\omega,\omega_{1}}^{1}L^{\Phi,\Psi}\left(\Omega\right)$
	\begin{equation}
		\int_{\Omega}\omega(x)\frac{\varphi\left(|\nabla u|\right)}{\left|\nabla u \right| }\nabla u\nabla v\,dx =\lambda\int_{\Omega}\omega_{1}(x)\psi(\left|u\right|)\frac{u}{\left|u\right|}v\,dx.
	\end{equation}
	The precise definition of $\WW$ is given in Section 3.
\end{Definition} 
The following functionals have an essential role in this study
$$
I(u)=\int_{\Omega}\omega(x)\Phi\left(\left|\nabla u\right|\right)dx,
$$
and
$$
J(u)=\int_{\Omega}\omega_{1}(x)\Psi\left(|u|\right)dx.
$$

Eigenvalues for non-homogeneous eigenproblems strongly depend on the energy level $\alpha>0$ given by the minimization problem
\begin{equation}{\label{minid}}
	\Lambda_{\alpha}:=\min\left\{ I(u):\ u\in M_{\alpha}\right\}\ \text{where}\ M_{\alpha}:=\left\lbrace u\in W_{0,\omega}^{1}L^{\Phi}\left(\Omega\right):\ J(u)=\alpha \right\rbrace.
\end{equation}   
Once we prove the existence of solution for (\ref{minid}), It is possible to prove existence of an eigenvalue for (\ref{mainproblem0}) by a
multiplier argument in the case when $I$ and $J$ are $C^{1}$.\\

We declare the following structural conditions
\begin{equation}\label{Phi1}\tag{$\Phi_{1}$}
	1<l \leq \frac{t \varphi(t)}{\Phi(t)} \leq m< \infty\quad \text { for } t>0.
\end{equation}

\begin{equation}\label{Phi2}\tag{$\Phi_{2}$}
	t\mapsto\Phi\left(\sqrt{t}\right)\ \text{is convex}.
\end{equation}

\begin{equation}\label{Psi1}\tag{$\Psi_{1}$}
	1<l_{1} \leq \frac{t \psi(t)}{\Psi(t)} \leq m_{1}<\infty \quad \text { for } t>0.
\end{equation}

\begin{equation}\label{Psi1}\tag{$\Psi_{2}$}
	\Psi\prec\prec\Phi \ \ \left(\text{see Definition \ref{definition22}} \right).
\end{equation}
We say that a Young function $\Phi$ verifies the $\Delta_{2}$-condition, if for some constant $K > 0$
$$
\Phi(2t) \leq K\Phi(t),\quad \forall t\geq 0.
$$
It is possible to show that the functions below  satisfy the $\Delta_2$-condition:
\begin{itemize}
	\item [(i)] $ \Phi(t) = (1+t^{2})^{\alpha}-1, \alpha \in (1, \frac{N}{N-2})$,
	\item [(ii)] $ \Phi(t) = t^{p}\ln(1+t), 1< \frac{-1+\sqrt{1+4N}}{2}<p<N-1, N\geq 3$,
	\item [(iii)] $\Phi(t) = \frac{1}{p}|t|^{p}$ for $p>1$,
	\item [(iv)] $\Phi(t) = \frac{1}{p}|t|^{p} + \frac{1}{q}|t|^{q}$ where $1<p<q<N$ with $q \in (p, p^{*}),$
\end{itemize}
while $\Phi(t)=(e^{t^{2}}-1)/2$ does not verify it. In \cite{Fukagai} the authors mentioned some typical examples in the fields of physics
\begin{description}
	\item[a] nonlinear elasticity: $\Phi(t)=\left(1+t^2\right)^\gamma-1, \gamma>1 / 2$,
	\item[b] plasticity: $\Phi(t)=t^\alpha(\log (1+t))^\beta, \alpha \geq 1, \beta>0$,
	\item[c] generalized Newtonian fluids: $\Phi(t)=\int_0^t s^{1-\alpha}\left(\sinh ^{-1} s\right)^\beta \mathrm{d} s, 0 \leq \alpha \leq 1$, $\beta>0$.
\end{description}  
For each $x\in\Omega$, let  $$\delta(x):=\sup\{\delta>0 \mid B(x,\delta)\subseteq\Omega\} .$$
It is clear that there exists $ x_{0}\in \Omega$ such that
$B(x_{0},D)\subseteq\Omega$
where $D=\displaystyle\sup_{x\in\Omega}\delta(x)$.
Our initial outcomes are summarized as follows
\begin{Theorem}\label{T0}
	Let $\Phi$ satisfies $(\Phi_{1})$, $(\Phi_{2})$ and let $\Psi$ satisfies $(\Psi_{1})$, $(\Psi_{2})$. Assume that there exist $r>0$ and $d>0$ such that
	\begin{equation*}\label{r4} r<\min\left\lbrace\left\|\frac{2d}{D}\right\|_{L_{\omega}^{\Phi}\left(\Omega\right)}^{l},\left\|\frac{2d}{D}\right\|_{L_{\omega}^{\Phi}\left(\Omega\right)}^{m}  \right\rbrace
	\end{equation*}
	and
	\begin{equation*}
		\begin{aligned}
			\tilde{w}_r &:=\max\left\lbrace C^{l_{1}}_{1},C^{m_{1}}_{1} \right\rbrace\max\left\lbrace \max\left\lbrace\left| r \right|^{\frac{l_{1}}{l}},\left| r\right|^{\frac{l_{1}}{m}}\right\rbrace,\max\left\lbrace\left| r \right|^{\frac{m_{1}}{l}},\left| r\right|^{\frac{m_{1}}{m}}  \right\rbrace\right\rbrace\\
			&<\gamma_d:=\frac{\min\left\lbrace\left|d\right|^{l_{1}},\left|d\right|^{m_{1}}\right\rbrace\Psi\left(1\right)\frac{\pi^{\frac{N}{2}}}{\frac{N}{2}\Gamma(\frac{N}{2})}\left(\frac{D}{2}\right)^{N} }{\left(2N\right)^{m}\max\left\lbrace\left\|\frac{d}{D}\right\|_{L_{\omega}^{\Phi}\left(\Omega\right)}^{l},\left\|\frac{d}{D}\right\|_{L_{\omega}^{\Phi}\left(\Omega\right)}^{m} \right\rbrace}.
		\end{aligned}
	\end{equation*}
	Then, there exist $v_{d}\in\WW$, such that for every $\lambda\in\Lambda_r:=\left( \frac{I(v_{d})}{J(v_{d})}, \frac{r}{\displaystyle\sup_{I(u)\leq r}J(u)}\right)$ the problem \eqref{mainproblem0} admits at least three weak solutions.
\end{Theorem}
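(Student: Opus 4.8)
The plan is to observe that weak solutions of \eqref{mainproblem0} are exactly the critical points of the energy functional
$$
E_{\lambda}(u) = I(u) - \lambda J(u), \qquad u \in \WW,
$$
because $\langle I'(u),v\rangle = \int_{\Omega}\omega(x)\frac{\varphi(|\nabla u|)}{|\nabla u|}\nabla u\nabla v\,dx$ and $\langle J'(u),v\rangle = \int_{\Omega}\omega_{1}(x)\psi(|u|)\frac{u}{|u|}v\,dx$, so that $E_{\lambda}'(u)=0$ is precisely the weak formulation. To obtain three critical points I would invoke a three–critical–points theorem of Bonanno type, whose hypotheses require: (i) $\WW$ reflexive; (ii) $I$ coercive, sequentially weakly lower semicontinuous, of class $C^{1}$, and bounded on bounded sets; (iii) $J$ of class $C^{1}$ with compact derivative; (iv) $I(0)=J(0)=0$; and the two quantitative conditions, namely (a) $r<I(v_{d})$ together with $\frac{\sup_{I(u)\le r}J(u)}{r}<\frac{J(v_{d})}{I(v_{d})}$, and (b) coercivity of $E_{\lambda}$ for every $\lambda$ in the stated interval $\Lambda_{r}$. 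The abstract theorem then yields, for each $\lambda\in\Lambda_{r}$, at least three critical points of $E_{\lambda}$, i.e. three weak solutions.

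First I would dispatch the structural hypotheses (i)--(iv), which should follow from the functional framework set up in the earlier sections. Reflexivity of $\WW$ and the $\Delta_{2}$-regularity needed for differentiability come from the two-sided bounds $(\Phi_{1})$ and $(\Psi_{1})$, which force the $\Delta_{2}$-condition on $\Phi,\Psi$ and their conjugates. Coercivity, weak lower semicontinuity and boundedness on bounded sets of $I$ follow from convexity together with the modular--norm inequalities generated by $(\Phi_{1})$. The crucial compactness of $J'$ is exactly where $(\Psi_{2})$, i.e. $\Psi\prec\prec\Phi$, is used: it provides a compact embedding of $W_{0,\omega}^{1}L^{\Phi}(\Omega)$ into $L_{\omega_{1}}^{\Psi}(\Omega)$, whence $J'$ is compact. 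Finally $I(0)=J(0)=0$ is immediate.

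The heart of the argument is the explicit verification of (a), which is what the constants $\tilde w_{r}$ and $\gamma_{d}$ encode. I would take the test function $v_{d}\in\WW$ equal to $d$ on the inner ball $B(x_{0},D/2)$, interpolated affinely and radially down to $0$ across the annulus $B(x_{0},D)\setminus B(x_{0},D/2)$, and $0$ outside $B(x_{0},D)$; this is admissible since $B(x_{0},D)\subseteq\Omega$. Its gradient has modulus $2d/D$ on the annulus, which produces the $\|2d/D\|_{L_{\omega}^{\Phi}(\Omega)}$ terms and, after converting the modular $I(v_{d})$ to norm via $(\Phi_{1})$ and bounding $\varphi$ by the ratio estimate, the factor $(2N)^{m}$ in the denominator of $\gamma_{d}$; meanwhile $J(v_{d})\ge \min\{|d|^{l_{1}},|d|^{m_{1}}\}\,\Psi(1)\,|B(x_{0},D/2)|$ supplies the numerator, with $|B(x_{0},D/2)|=\frac{\pi^{N/2}}{\frac{N}{2}\Gamma(N/2)}(D/2)^{N}$, giving $\frac{J(v_{d})}{I(v_{d})}\ge\gamma_{d}$. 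For the opposite side I would bound $\sup_{I(u)\le r}J(u)$ from above: the sublevel set $\{I(u)\le r\}$ controls the $\WW$-norm through $(\Phi_{1})$ (whence the nested exponents $l_{1}/l,\ l_{1}/m,\ m_{1}/l,\ m_{1}/m$ arise), and feeding the embedding constant $C_{1}$ into $J$ through $(\Psi_{1})$ produces the factor $\max\{C_{1}^{l_{1}},C_{1}^{m_{1}}\}$, so that $\sup_{I(u)\le r}J(u)\le r\,\tilde w_{r}$; the assumed inequality $\tilde w_{r}<\gamma_{d}$ then delivers (a). The side condition $r<\min\{\|2d/D\|_{L_{\omega}^{\Phi}}^{l},\|2d/D\|_{L_{\omega}^{\Phi}}^{m}\}$ is precisely what guarantees $r<I(v_{d})$.

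It remains to verify (b): since $\Psi\prec\prec\Phi$, the function $\Psi$ grows essentially more slowly than $\Phi$, so via the embedding $J(u)$ is of lower order than $I(u)$ as $\|u\|\to\infty$, and hence $E_{\lambda}=I-\lambda J$ is coercive for every $\lambda$ (in particular on all of $\Lambda_{r}$). Applying the abstract theorem then completes the proof. The main obstacle I anticipate is not any single inequality but the delicate bookkeeping of constants in (a): one must simultaneously extract a sharp enough lower bound for $J(v_{d})/I(v_{d})$ and a sharp enough upper bound for $\sup_{I(u)\le r}J(u)/r$, while consistently translating between the modulars $I,J$ and the Luxemburg norms through the four exponent regimes $l,m,l_{1},m_{1}$ dictated by $(\Phi_{1})$ and $(\Psi_{1})$; it is this translation, rather than the soft variational input, that forces the precise form of $\tilde w_{r}$ and $\gamma_{d}$.
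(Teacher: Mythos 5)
Your proposal follows essentially the same route as the paper: the same Bonanno--Marano three-critical-points theorem, the same radially truncated test function $v_{d}$ with gradient modulus $2d/D$ on the annulus, the same modular-to-norm translations via $(\Phi_{1})$, $(\Psi_{1})$ and the Poincar\'e/embedding constant $C_{1}$ to establish $J(v_{d})/I(v_{d})\geq\gamma_{d}$ and $\sup_{I(u)\leq r}J(u)\leq r\,\tilde{w}_{r}$, and the same coercivity of $I-\lambda J$ from $\Psi\prec\prec\Phi$ (the paper makes this precise through the uniform bound on $J$ in Lemma \ref{algebric1}). The argument is correct and matches the paper's proof in all essentials.
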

\smallskip
For the next result, for any $k\in\mathbb{N}$, we set 
$$
\mathcal{C}_k:=\{ K\subset M_\alpha \text{ compact, symmetric with } I(u)>0 \text{ on } K  \text{ and } \gamma(K)\geq k\},
$$
where $\gamma(K)$ denotes the Krasnoselskii genus of $K$ defined by 
$$
\gamma(K):=\inf\{p\in\mathbb{N}\colon \exists h\colon K\to \R^p\setminus\{0\}\ \text{such that}\ h \text{ is continuous and odd} \}.
$$
Now, we are in position to give our second result in the following Theorem
\begin{Theorem}\label{T05}
	Let $\Phi$ and $\Psi$ two Young functions satisfy $(\Phi_{1})$ and $(\Psi_{1})$, $(\Psi_{2})$ respectively. For any $\alpha>0$
	there is a non-negative sequence $\{\lambda_{k,\alpha}\}_{k\in\mathbb{N}}$ of eigenvalues of (\ref{mainproblem0}) with  eigenfunctions $\{u_{k,\alpha}\}_{k\in \mathbb{N}}\subset W_{0,\omega,\omega_{1}}^{1}L^{\Phi,\Psi}\left(\Omega\right)$ satisfying 
	\begin{equation}\label{E}
		J(u_{k,\alpha})=\alpha,\ \text{and} \ \ I(u_{k,\alpha}):=c_{k,\alpha} = \sup_{K\in \mathcal{C}_k}\inf_{u\in K}I(u).
	\end{equation}
\end{Theorem}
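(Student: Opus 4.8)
The plan is to construct the sequence of eigenvalues via a standard Ljusternik--Schnirelmann minimax scheme on the constraint manifold $M_\alpha$, so I would first verify that the variational framework is sound. Specifically, I would set $c_{k,\alpha}:=\sup_{K\in\mathcal{C}_k}\inf_{u\in K}I(u)$ and show that each $c_{k,\alpha}$ is a critical value of $I$ restricted to $M_\alpha$. The opening step is to establish the basic geometric and topological facts: that $M_\alpha=\{u:J(u)=\alpha\}$ is a nonempty, bounded, closed (hence weakly closed) symmetric $C^1$ submanifold of $\WW$, that the class $\mathcal{C}_k$ is nonempty for every $k$ (here the genus of suitable finite-dimensional symmetric subsets enters, so I would exhibit $k$-dimensional symmetric compact subsets of $M_\alpha$ with genus $\geq k$), and that the functionals $I,J$ are even and $C^1$. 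Under $(\Phi_1)$ and $(\Psi_1)$ the functionals are bounded on bounded sets and coercive/controlled by the modular, which gives $0<c_{k,\alpha}<\infty$.

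Next I would establish the two analytic pillars that make the minimax values attainable as critical points. The first is a compactness condition: I expect to need a Palais--Smale type condition for $I|_{M_\alpha}$, which follows from reflexivity of the space together with the compact embedding $\WW\hookrightarrow L^\Psi_{\omega_1}(\Omega)$ supplied by the hypothesis $\Psi\prec\prec\Phi$ (that is, $(\Psi_2)$). The compact embedding is what converts weak convergence of a Palais--Smale sequence into strong convergence of the constraint functional $J$, keeping the limit on $M_\alpha$. The second pillar is a deformation lemma: using the pseudo-gradient flow of $I|_{M_\alpha}$, together with the equivariance of the flow with respect to the $\ZZ/2$ action $u\mapsto -u$ and the monotonicity and subadditivity properties of the Krasnoselskii genus $\gamma$, I would show that if $c_{k,\alpha}$ is not critical then one can deform a near-optimal set in $\mathcal{C}_k$ below the level $c_{k,\alpha}$ without decreasing its genus, contradicting the definition of $c_{k,\alpha}$ as a supremum. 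This is the heart of the Ljusternik--Schnirelmann machinery.

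Once $c_{k,\alpha}$ is shown to be a critical value, I would produce the corresponding critical point $u_{k,\alpha}\in M_\alpha$, so that $J(u_{k,\alpha})=\alpha$ and $I(u_{k,\alpha})=c_{k,\alpha}$ by construction. The final step is to convert the constrained criticality into the eigenvalue statement for the original problem: by the Lagrange multiplier principle, at a constrained critical point there exists $\mu_{k,\alpha}\in\R$ with $I'(u_{k,\alpha})=\mu_{k,\alpha}J'(u_{k,\alpha})$ in the dual space, and testing this identity against $u_{k,\alpha}$ together with $(\Phi_1)$, $(\Psi_1)$ shows the multiplier is positive; setting $\lambda_{k,\alpha}=1/\mu_{k,\alpha}$ (or the appropriate reciprocal normalization) identifies $u_{k,\alpha}$ as a weak eigenfunction of \eqref{mainproblem0} in the sense of the Definition above. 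Writing $I'(u)v=\int_\Omega\omega(x)\frac{\varphi(|\nabla u|)}{|\nabla u|}\nabla u\cdot\nabla v\,dx$ and $J'(u)v=\int_\Omega\omega_1(x)\psi(|u|)\frac{u}{|u|}v\,dx$ makes the Euler--Lagrange identity coincide exactly with the weak formulation, and non-negativity of $\{\lambda_{k,\alpha}\}$ follows from positivity of both functionals on $M_\alpha$.

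The main obstacle I anticipate is verifying the Palais--Smale condition rigorously without the $\Delta_2$-condition, since $(\Psi_2)$ is assumed but the $\Delta_2$-condition on $\Phi$ is deliberately \emph{not} imposed in this theorem. Absent $\Delta_2$, the space $\WW$ need not be separable or uniformly convex in the usual sense and the modular need not be continuous along sequences, so the compactness argument cannot rely on norm--modular equivalence and must be carried out carefully at the level of weak-$*$ convergence and the Luxemburg norm. Controlling the gradient term $I'(u_n)$ along a Palais--Smale sequence and upgrading weak convergence to the strong convergence needed to pass to the limit in the nonlinear operator (for instance via a monotonicity/$(S_+)$ argument for the weighted $\varphi$-Laplacian) is where the technical weight of the proof will lie.
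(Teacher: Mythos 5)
Your strategy is the standard Ljusternik--Schnirelmann minimax on $M_\alpha$ with the Krasnoselskii genus, and this is essentially what the paper does: it delegates the deformation/pseudo-gradient machinery you describe to the abstract Theorem \ref{B1} of Motreanu--Motreanu--Papageorgiou and concentrates on verifying its hypotheses, namely that $J'$ is strongly continuous (via the compact embedding into $L^{\Psi}_{\omega_1}(\Omega)$, Lemma \ref{important3} and dominated convergence) and that $I'$ is of type $(S_+)$ with a continuous inverse (Proposition \ref{homeomor}), after which the Lagrange-multiplier identification $\lambda_{k,\alpha}=1/\mu_{k,\alpha}$ is exactly as in your last step.

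One correction, though: the ``main obstacle'' you single out is not present in this theorem. Condition $(\Phi_1)$ \emph{is} assumed in Theorem \ref{T05}, and, as the paper recalls from Krasnosel'skii--Rutickii, the upper bound $t\varphi(t)/\Phi(t)\le m$ is equivalent to the $\Delta_2$-condition for $\Phi$, while $l>1$ yields it for $\widetilde{\Phi}$; likewise $(\Psi_1)$ gives $\Delta_2$ for $\Psi$ and $\widetilde{\Psi}$. Consequently the space is reflexive and separable, norm and modular are comparable through Proposition \ref{important}, and $I$, $J$ are genuinely $C^1$, so none of the weak-$*$ precautions you anticipate are needed here; the ``no $\Delta_2$'' analysis is reserved for Theorems \ref{T1}--\ref{T3}. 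The real technical weight lies instead in the two hypothesis verifications above, which your outline mentions only in passing.
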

In this sequel, we give some results without assuming the $\Delta_{2}$-condition.
\begin{Theorem}\label{T1}
	Let $\Psi$ satisfies $(\Psi_{2})$. For each $\alpha>0$, there exists $u_{\alpha}\in \WW$ solution of the minimization Problem (\ref{minid}).
\end{Theorem}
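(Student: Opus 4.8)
The plan is to run the direct method of the calculus of variations, keeping in mind that the $\Delta_{2}$-condition is \emph{not} assumed, so $W_{0,\omega}^{1}L^{\Phi}\left(\Omega\right)$ need not be reflexive and the limit must be taken in the weak-$\ast$ topology. First I would fix a minimizing sequence $\left(u_{n}\right)\subset M_{\alpha}$, that is $J(u_{n})=\alpha$ and $I(u_{n})\to\Lambda_{\alpha}$, and assume without loss of generality that $I(u_{n})\le\Lambda_{\alpha}+1$ for all $n$. Coercivity of $I$ is then automatic from the general modular--norm inequality valid for \emph{any} Young function: since $I(u_{n})=\int_{\Omega}\omega(x)\Phi\left(|\nabla u_{n}|\right)dx$ dominates $\|\nabla u_{n}\|_{L_{\omega}^{\Phi}\left(\Omega\right)}$ whenever the latter exceeds $1$, the gradients are bounded in $L_{\omega}^{\Phi}\left(\Omega\right)$, and a weighted Orlicz--Sobolev Poincar\'e inequality bounds $\left(u_{n}\right)$ in $W_{0,\omega}^{1}L^{\Phi}\left(\Omega\right)$.

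Next I would extract a convergent subsequence. As $L_{\omega}^{\Phi}\left(\Omega\right)$ is a dual space (of the separable space associated with $\tilde{\Phi}$), Banach--Alaoglu furnishes $u_{\alpha}$ with $u_{n}\rightharpoonup^{\ast}u_{\alpha}$ and $\nabla u_{n}\rightharpoonup^{\ast}\nabla u_{\alpha}$ in $L_{\omega}^{\Phi}\left(\Omega\right)$ after relabelling. The hypothesis $(\Psi_{2})$, i.e. $\Psi\prec\prec\Phi$, is exactly what delivers a Rellich--Kondrachov type compact embedding $W_{0,\omega}^{1}L^{\Phi}\left(\Omega\right)\hookrightarrow\hookrightarrow L_{\omega_{1}}^{\Psi}\left(\Omega\right)$ (established in Section 3), so along a further subsequence $u_{n}\to u_{\alpha}$ strongly in $L_{\omega_{1}}^{\Psi}\left(\Omega\right)$ and pointwise a.e. in $\Omega$.

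The delicate point is to recover the constraint \emph{exactly}, $J(u_{\alpha})=\alpha$, and this is precisely where the absence of $\Delta_{2}$ bites: strong norm convergence in $L_{\omega_{1}}^{\Psi}\left(\Omega\right)$ does not on its own force modular convergence of $J$. Here I would use $\Psi\prec\prec\Phi$ a second time: the uniform bound on $\int_{\Omega}\omega_{1}(x)\Psi\left(|u_{n}|\right)dx$ together with the essentially faster growth of $\Phi$ gives, via a de la Vall\'ee-Poussin / uniform-integrability argument, that the family $\{\omega_{1}(x)\Psi\left(|u_{n}|\right)\}_{n}$ is uniformly integrable. Combining this with the a.e. convergence, the Vitali convergence theorem yields $J(u_{n})\to J(u_{\alpha})$, hence $J(u_{\alpha})=\alpha$ and $u_{\alpha}\in M_{\alpha}$.

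Finally, lower semicontinuity closes the argument. The functional $I$ is convex (as $\Phi$ is convex and nondecreasing and $u\mapsto|\nabla u|$ is convex); writing $\Phi$ through its Legendre transform realises $I$ as a supremum of weak-$\ast$ continuous affine functionals, so $I$ is weak-$\ast$ lower semicontinuous and $I(u_{\alpha})\le\liminf_{n}I(u_{n})=\Lambda_{\alpha}$. Since $u_{\alpha}\in M_{\alpha}$ forces $I(u_{\alpha})\ge\Lambda_{\alpha}$, we conclude $I(u_{\alpha})=\Lambda_{\alpha}$, so $u_{\alpha}\in\WW$ solves \eqref{minid}. I expect the genuine obstacle to be the modular continuity of $J$ along the minimizing sequence, namely securing uniform integrability of $\{\omega_{1}\Psi(|u_{n}|)\}_{n}$ from $(\Psi_{2})$, since this step is what replaces the missing $\Delta_{2}$-condition.
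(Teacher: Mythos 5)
Your overall architecture coincides with the paper's: a minimizing sequence, boundedness from coercivity of $I$, extraction of a weak* limit via Banach--Alaoglu (the space being the dual of a separable space), strong convergence of $u_n$ in $L_{\omega_{1}}^{\Psi}(\Omega)$ from the compact embedding, and weak* lower semicontinuity of $I$ obtained exactly as in Proposition \ref{4.2} by writing $\Phi$ through its Legendre transform (Gossez's duality lemma). The one place where you genuinely diverge is the crux, namely recovering $J(u_{\alpha})=\alpha$ without $\Delta_2$. The paper isolates this as Lemma \ref{weakclosed}: it bounds $|J(u_n)-J(u)|$ by $2\|\psi(|u|+|u_n-u|)\|_{L_{\omega_{1}}^{\widetilde{\Psi}}(\Omega)}\|u_n-u\|_{L_{\omega_{1}}^{\Psi}(\Omega)}$ via monotonicity of $\psi$ and H\"older, and then shows the first factor is uniformly bounded using convexity of $\Psi$ and Lemma \ref{algebric1}; strong $L_{\omega_{1}}^{\Psi}$-convergence then kills the product. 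You instead go through a.e.\ convergence plus uniform integrability of $\{\omega_{1}\Psi(|u_n|)\}$ and Vitali. Both routes ultimately lean on $\Psi\prec\prec\Phi$ (or its Sobolev conjugate), and yours is arguably more standard, but note two points. First, as written your de la Vall\'ee-Poussin step is imprecise: a uniform bound on $\int_{\Omega}\omega_{1}\Psi(|u_n|)\,dx$ alone (which is just the constant $\alpha$) never yields uniform integrability; what you actually need is the bound of $u_n$ in the \emph{stronger} class coming from the gradient bound via Poincar\'e/Sobolev, i.e.\ $\int_{\Omega}\omega\,\Phi(|u_n|/C)\,dx\leq 1$, after which $\Psi\prec\prec\Phi$ makes $s\mapsto\Phi(\Psi^{-1}(s)/C)$ superlinear and supplies the de la Vall\'ee-Poussin criterion. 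Second, the weights do not match ($\omega$ on the $\Phi$-modular versus $\omega_{1}$ on the $\Psi$-modular, with $\omega_{1}$ possibly unbounded), so the uniform-integrability estimate needs the kind of splitting over $\{|u_n|\leq T\}$ and its complement that the paper performs in Lemma \ref{algebric1}; this is fixable but not automatic. Finally, you omit the (easy) observation that $M_{\alpha}\neq\emptyset$, which the paper records before starting the direct method.
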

\begin{Theorem}\label{T2}
	Let $\Psi$ satisfies $(\Psi_{2})$. If $u_{\alpha}$ solves the minimization Problem (\ref{minid}) then there exists $\lambda_{\alpha}>0$ such that $u_{\alpha}$ is a weak solution of the Problem (\ref{mainproblem0}).
\end{Theorem}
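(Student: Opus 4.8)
The plan is to read the weak formulation in the Definition as the Lagrange multiplier relation $I'(u_\alpha)=\lambda_\alpha J'(u_\alpha)$ associated to the constrained minimisation \eqref{minid}. The two integrals in the weak formulation are exactly the (one-sided) directional derivatives
$$
I'(u_\alpha)[v]=\int_\Omega \omega(x)\frac{\varphi(|\nabla u_\alpha|)}{|\nabla u_\alpha|}\nabla u_\alpha\cdot\nabla v\,dx,\qquad J'(u_\alpha)[v]=\int_\Omega \omega_1(x)\psi(|u_\alpha|)\frac{u_\alpha}{|u_\alpha|}v\,dx,
$$
so it suffices to (i) justify that $I,J$ are Gâteaux differentiable at $u_\alpha$ with these representations, (ii) produce the multiplier, and (iii) prove $\lambda_\alpha>0$. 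First I would record that $\Lambda_\alpha=I(u_\alpha)>0$: if it vanished then $\nabla u_\alpha=0$ a.e., hence $u_\alpha=0$ by the Dirichlet condition, contradicting $J(u_\alpha)=\alpha>0$.

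For (i) I would use only convexity of the Young functions. For fixed $v\in\WW$ the scalar maps $t\mapsto\Phi(|\nabla u_\alpha+t\nabla v|)$ and $t\mapsto\Psi(|u_\alpha+tv|)$ are convex, so their difference quotients are monotone and converge pointwise to $\varphi(|\nabla u_\alpha|)\,\tfrac{\nabla u_\alpha\cdot\nabla v}{|\nabla u_\alpha|}$ and $\psi(|u_\alpha|)\,\tfrac{u_\alpha}{|u_\alpha|}v$; by monotone convergence these integrate to the directional derivatives, which are linear in $v$ and thus genuine Gâteaux derivatives with the formulas above. The delicate point, and the main obstacle, is that $(\Phi_{1})$ (hence the $\Delta_2$-type bound $t\varphi(t)\le m\Phi(t)$) is \emph{not} available here, so the gradients need not a priori lie in the dual space. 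I would obtain the required integrability from the sharp Young estimate $\tilde\Phi(\varphi(t))\le\Phi(2t)$ (and its analogue $\tilde\Psi(\psi(t))\le\Psi(2t)$): the hypothesis $(\Psi_{2})$, $\Psi\prec\prec\Phi$, dominates the lower-order term and yields $\int_\Omega\omega_1\Psi(2|u_\alpha|)\,dx<\infty$, hence $\psi(|u_\alpha|)\in L^{\tilde\Psi}_{\omega_1}(\Omega)$; establishing $\varphi(|\nabla u_\alpha|)\in L^{\tilde\Phi}_{\omega}(\Omega)$ in the absence of $\Delta_2$ is the genuinely technical step, which I would extract from minimality by comparing $u_\alpha$ with its truncations and with the admissible competitors $s\,u_\alpha$ ($0<s<1$). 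Once both gradients are in the respective dual spaces, the generalized Hölder inequality makes $I'(u_\alpha)$ and $J'(u_\alpha)$ continuous linear functionals on $\WW$.

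With differentiability secured I would run the Lagrange argument. Since $\Psi(t)\le t\psi(t)$ gives $J'(u_\alpha)[u_\alpha]=\int_\Omega\omega_1\psi(|u_\alpha|)|u_\alpha|\,dx\ge\int_\Omega\omega_1\Psi(|u_\alpha|)\,dx=\alpha>0$, the constraint $J=\alpha$ is regular at $u_\alpha$. For arbitrary $v\in\WW$ I set $g_t(s):=J(u_\alpha+tv+s\,u_\alpha)$; because $\partial_s g_t(0)\to J'(u_\alpha)[u_\alpha]>0$ as $t\to0$, the map $s\mapsto g_t(s)$ is strictly increasing near the origin, so the intermediate value theorem furnishes $s(t)=O(t)$ with $g_t(s(t))=\alpha$ and $s'(0)=-J'(u_\alpha)[v]/J'(u_\alpha)[u_\alpha]$, keeping $\gamma(t):=u_\alpha+tv+s(t)u_\alpha$ in $M_\alpha$. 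Minimality forces $\frac{d}{dt}\big|_{t=0}I(\gamma(t))\ge0$; applying this to $v$ and to $-v$ and using linearity of $I'(u_\alpha)$ upgrades the inequality to the equality $I'(u_\alpha)[v]=\lambda_\alpha J'(u_\alpha)[v]$ with $\lambda_\alpha=I'(u_\alpha)[u_\alpha]/J'(u_\alpha)[u_\alpha]$, so $u_\alpha$ solves the weak formulation.

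Finally, for the sign, the numerator obeys $I'(u_\alpha)[u_\alpha]=\int_\Omega\omega\,\varphi(|\nabla u_\alpha|)|\nabla u_\alpha|\,dx\ge\int_\Omega\omega\,\Phi(|\nabla u_\alpha|)\,dx=\Lambda_\alpha>0$ (using $\Phi(t)\le t\varphi(t)$ and $\Lambda_\alpha>0$ from the first paragraph), while the denominator is $\ge\alpha>0$; hence $\lambda_\alpha>0$, which completes the proof. I expect everything outside the non-$\Delta_2$ dual-integrability of $\varphi(|\nabla u_\alpha|)$ to be routine constrained-critical-point bookkeeping, and that integrability to be the one place where the structure of the space and the minimality of $u_\alpha$ must be used in an essential way.
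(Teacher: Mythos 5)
Your overall strategy (constrained minimisation, implicit function theorem to stay on $M_\alpha$, Lagrange multiplier, positivity of $\lambda_\alpha$) matches the paper's, and you correctly single out $\varphi(|\nabla u_\alpha|)\in L_{\omega}^{\tilde{\Phi}}(\Omega)$ as the technical crux (the paper isolates this as Lemma \ref{algebric4}, borrowed from Mustonen--Tienari). But there is a genuine gap in the step where you ``apply this to $v$ and to $-v$'' to upgrade the variational inequality to an equality. Your curve is $\gamma(t)=(1+s(t))u_\alpha+tv$, and for one of the two signs of $t$ (whichever makes $s(t)>0$) the curve \emph{dilates} $u_\alpha$. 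Since Theorem \ref{T2} is stated without the $\Delta_2$-condition, $\int_\Omega\omega\,\Phi(|\nabla u_\alpha|)\,dx<\infty$ does not imply $\int_\Omega\omega\,\Phi((1+\eta)|\nabla u_\alpha|)\,dx<\infty$ for $\eta>0$, and the natural dominating function for the difference quotient, $\bigl(\varphi(|\nabla\gamma(t)|)+\varphi(|\nabla u_\alpha|)\bigr)|\nabla\gamma(t)-\nabla u_\alpha|/|t|$, involves $\varphi$ evaluated at something strictly larger than $|\nabla u_\alpha|$ and is not known to be integrable. So the two-sided derivative of $t\mapsto I(\gamma(t))$ at $t=0$ cannot be justified; only the derivative along contracting perturbations is available. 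The same issue undermines your monotone-convergence proof of G\^ateaux differentiability: for a decreasing family of difference quotients you need the quotient at some fixed $t_0>0$ to be integrable, i.e.\ $\Phi(|\nabla u_\alpha+t_0\nabla v|)\in L^1(\Omega)$, which fails in general because $K^{\Phi}(\Omega)$ is not a vector space without $\Delta_2$.

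The paper circumvents exactly this point: its admissible curve is $w_\epsilon=(1-\epsilon)u_\alpha+\delta(\epsilon)v$ with $\epsilon\to0^{+}$, so $u_\alpha$ is only ever \emph{shrunk}, and the dominating function $\bigl(2\varphi(|\nabla u_\alpha|)+\varphi(2\delta'(0)|\nabla v|)\bigr)\bigl(\tfrac{2}{\delta'(0)}|\nabla u_\alpha|+|\nabla v|\bigr)$ is integrable by Lemma \ref{algebric4} and H\"older. This yields only the one-sided \emph{inequality} $\int_\Omega\omega\frac{\varphi(|\nabla u_\alpha|)}{|\nabla u_\alpha|}\nabla u_\alpha\cdot\nabla v>0$ for every $v$ with $\int_\Omega\omega_1\psi(|u_\alpha|)\frac{u_\alpha}{|u_\alpha|}v>0$, i.e.\ the cone inclusion $V_J\subset V_I$ and hence the kernel inclusion; the multiplier is then produced not by a two-sided first-order condition but by the abstract linear-functional result (Proposition \ref{KER}) applied on $W^{1}_{0,\omega,\omega_{1}}E^{\Phi,\Psi}(\Omega)$ and extended to the whole space by weak* density. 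To repair your argument you would either have to restrict to the $\Delta_2$ case or replace the symmetric $\pm v$ step by this one-sided-inequality-plus-separation mechanism.
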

Next, we define the spectrum $\Sigma_{\left(\ref{mainproblem0}\right)}$ by
\begin{equation*}
	\Sigma_{\left(\ref{mainproblem0}\right)}:=\left\lbrace\lambda\in\R; \ \exists u\in W_{\omega,\omega_{1}}^{1}L^{\Phi,\Psi}\left(\Omega\right)\ \text{solution of Problem (\ref{mainproblem0})} \right\rbrace
\end{equation*}
\begin{Theorem}\label{T3}
	Let $\Psi$ satisfies $(\Psi_{2})$. The spectrum $\Sigma_{\left(\ref{mainproblem0}\right)}$ is a closed subset of $\R$.
\end{Theorem}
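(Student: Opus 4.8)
The plan is to prove that $\Sigma_{(\ref{mainproblem0})}$ is sequentially closed. I start from a sequence $(\lambda_n)\subset\Sigma_{(\ref{mainproblem0})}$ with $\lambda_n\to\lambda$ and, for each $n$, a nontrivial weak solution $u_n\in\WW$ of Problem (\ref{mainproblem0}) at the level $\lambda_n$. The goal is to produce a nontrivial $u\in\WW$ solving (\ref{mainproblem0}) for $\lambda$, which then gives $\lambda\in\Sigma_{(\ref{mainproblem0})}$. Throughout, $\|\cdot\|$ denotes the norm of $\WW$.

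First I derive uniform bounds. Testing the weak formulation against $v=u_n$ and applying $(\Phi_1)$ on the left and $(\Psi_1)$ on the right yields $l\,I(u_n)\le\int_\Omega\omega\,\varphi(|\nabla u_n|)|\nabla u_n|\,dx=\lambda_n\int_\Omega\omega_1\psi(|u_n|)|u_n|\,dx\le\lambda_n m_1 J(u_n)$, together with the companion inequality $\lambda_n l_1 J(u_n)\le m\,I(u_n)$. Combining these with the continuous embedding $\WW\hookrightarrow L_{\omega_1}^{\Psi}(\Omega)$ furnished by $(\Psi_2)$ and with the modular--norm equivalences attached to $(\Phi_1)$ and $(\Psi_1)$, I bound $\|u_n\|$ from above; since $\lambda_n\to\lambda$ all the constants involved stay controlled.

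Next comes compactness. Because the $\Delta_2$-condition is not assumed, $\WW$ need not be reflexive; instead I use that bounded sets are weak-$*$ sequentially compact (Banach--Alaoglu, $L^{\Phi}$ being a dual space), so that along a subsequence $u_n\stackrel{*}{\rightharpoonup}u$. Crucially, $(\Psi_2)$ provides the compact embedding $\WW\hookrightarrow\hookrightarrow L_{\omega_1}^{\Psi}(\Omega)$, whence $u_n\to u$ strongly in $L_{\omega_1}^{\Psi}(\Omega)$; this forces the right-hand side to converge, $\lambda_n\int_\Omega\omega_1\psi(|u_n|)\frac{u_n}{|u_n|}v\,dx\to\lambda\int_\Omega\omega_1\psi(|u|)\frac{u}{|u|}v\,dx$ for every test function $v$. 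For the principal part I invoke the monotonicity of $u\mapsto-\operatorname{div}\bigl(\omega\frac{\varphi(|\nabla u|)}{|\nabla u|}\nabla u\bigr)$: a Minty--Browder argument, based on showing $\int_\Omega\omega\frac{\varphi(|\nabla u_n|)}{|\nabla u_n|}\nabla u_n\cdot\nabla(u_n-u)\,dx\to0$, upgrades the weak-$*$ convergence to modular convergence of the gradients and permits passage to the limit, so that $u$ satisfies the weak formulation at $\lambda$.

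The main obstacle is nontriviality of the limit, i.e. $u\neq0$. Because the operator is non-homogeneous, one cannot rescale $u_n$ to a fixed energy level without altering the eigenvalue, so $u\neq0$ must instead be secured by a genuine a priori lower bound $\|u_n\|\ge\delta>0$; equivalently, one must rule out that the eigenvalues accumulate at $0$ (note that $0\notin\Sigma_{(\ref{mainproblem0})}$, since testing the equation with $u$ forces $\nabla u=0$, hence $u=0$, when $\lambda=0$). I expect to extract this bound from the two displayed inequalities together with the modular--norm equivalences of $(\Phi_1)$, $(\Psi_1)$ and the embedding constant of $(\Psi_2)$, which constrain how small an eigenfunction can be for a bounded eigenvalue. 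Once $\|u_n\|$ is bounded below, the strong $L_{\omega_1}^{\Psi}$-convergence gives $J(u_n)\to J(u)>0$, so $u\neq0$ and $\lambda\in\Sigma_{(\ref{mainproblem0})}$; the sequential closedness of $\Sigma_{(\ref{mainproblem0})}$ follows.
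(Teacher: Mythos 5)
Your overall scheme (a priori bounds, weak-$*$ compactness, strong $L_{\omega_1}^{\Psi}$ convergence for the right-hand side, a Minty--Browder/$(S_+)$ argument for the principal part, then nontriviality of the limit) is a reasonable and in fact more demanding program than what the paper carries out, but it contains a genuine gap precisely at the step you yourself flag as the main obstacle. The a priori lower bound $\|u_n\|\geq\delta>0$ is only ``expected to be extracted'' from the two tested inequalities together with $(\Phi_1)$, $(\Psi_1)$ and the embedding constant, and it cannot be extracted from them. Writing $t=\|\nabla u_n\|_{L_\omega^{\Phi}(\Omega)}$, the inequality $l\,I(u_n)\leq\lambda_n m_1 J(u_n)$ combined with Proposition \ref{important} and the Poincar\'e inequality gives, for small $t$, something of the form $l\,t^{m}\leq \lambda_n m_1 C\,t^{l_1}$, which is a \emph{lower} bound on $t$ only when $l_1>m$; no such relation between the exponents is assumed, and under $(\Psi_2)$ the typical situation is the opposite one. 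Concretely, for $\Phi(t)=t^{p}$, $\Psi(t)=t^{q}$ with $q<p$ (so $\Psi\prec\prec\Phi$ holds), the map $u\mapsto tu$ sends an eigenpair $(\lambda,u)$ to $(t^{\,p-q}\lambda,\,tu)$, so one has sequences of eigenfunctions with bounded (even convergent) eigenvalues and $\|u_n\|\to 0$: the bound you need simply does not exist at this level of generality, and the nontriviality of the limit cannot be secured along the route you propose. (A secondary issue: your Minty step implicitly uses the $(S_+)$ property of $I'$, which the paper only establishes under $(\Phi_2)$ via Lemma \ref{important2}; Theorem \ref{T3} does not assume $(\Phi_2)$.)

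For comparison, the paper's own proof is much shorter and sidesteps the issue entirely: its definition of $\Sigma_{(\ref{mainproblem0})}$ does not exclude $u=0$, and the proof neither normalizes the eigenfunctions nor argues that the limit is nontrivial; it extracts a subsequence converging strongly in $L_{\omega_1}^{\Psi}(\Omega)$ and a.e., asserts a.e. convergence of $\varphi(|\nabla u_k|)\nabla u_k/|\nabla u_k|$ and $\psi(|u_k|)u_k/|u_k|$, and passes to the limit in the weak formulation directly, with no Minty argument and no lower bound. So you have correctly identified a real difficulty that the paper does not engage with, but your proposal does not close it, and the specific mechanism you suggest for closing it fails.
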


\section{Preliminaries}\label{sec2}

In this section we recall some properties of Orlicz and Orlicz-Sobolev spaces, which can be found in \cite{Krasnoselskii, KJF, RR}. If the reader is familiar with the topic, he or she can skip the section and go directly to the next sections.

\subsection{Young function}

\begin{Definition}
	A function $\Phi: \mathbb{R}_+\rightarrow\mathbb{R_{+}}$ is termed a Young function if it admits the integral representation $\Phi(t)=\int_{0}^{t}\varphi(s)ds,$ where the right continuous function $\varphi$ defined on $[0, \infty)$ is assumed to satisfy the following conditions:
	\begin{itemize}
		\item [($\varphi1$)] $\varphi(0) = 0,\ \varphi(s) > 0\ \text{for}\ s > 0,$
		\item [($\varphi2$)] $\varphi$ is non-decreasing,
		\item [($\varphi3$)] $\underset{s\to\infty}{\lim}\varphi(s)=\infty.$
	\end{itemize}
\end{Definition}

This condition states that a Young function $\Phi$ is convex, nonnegative, and strictly growing on the interval $[0,\infty]$. The convexity of $\Phi$ can follows the following estimations:
\begin{equation}\label{es1}
	\Phi(\alpha t)\leq \alpha \Phi(t), \ \text{if} \ \alpha\in\left[0,1\right], t\geq0,
\end{equation}
and
\begin{equation}\label{es2}
	\Phi(\beta t)\geq \beta \Phi(t), \ \text{if} \ \beta\in\left(1,\infty\right), t\geq0.
\end{equation}
In \cite[Theorem 4.1]{Krasnoselskii}, it is shown that the $\Delta_2-$ condition is equivalent to
$$
\frac{t \varphi(t)}{\Phi(t)} \leq m \quad \text { for } t>0,
$$
for some $m>1$.
The complementary function $\widetilde{\Phi}$ associated with $\Phi$ is given by its Legendre's transformation, that is,
$$
\widetilde{\Phi}(s) = \max_{t\geq 0}\{ st - \Phi(t)\}, \quad  \mbox{for} \quad s\geq0.
$$
The functions $\Phi$ and $\widetilde{\Phi}$ are complementary each other. Moreover, we also have a Young type inequality given by
$$
st \leq \Phi(t) + \widetilde{\Phi}(s), \quad \forall s, t\geq0.
$$
Let $\varphi^{-1}$ be the inverse of $\varphi$, then the complementary Young function admits the following formula
\begin{equation}\label{2.2}
	\widetilde{\Phi}(\tau)=\int_{0}^{\tau} \varphi^{-1}(t) dt.
\end{equation}
The accompanying relations are obviously helpful
\begin{equation}\label{messt}
	\Phi(2t)=\int_{0}^{2t}\varphi(\tau)d\tau>\int_{t}^{2t}\varphi(\tau)d\tau>t\varphi(t),
\end{equation}

\begin{equation}\label{messt1}
	\Phi(t)=\int_{0}^{t}\varphi(\tau)d\tau\leq t\varphi(t)
\end{equation}
and
\begin{equation}\label{messt2}
	\widetilde{\Phi}(\varphi(t)) \leq t \varphi(t) \leq \Phi(2 t)
\end{equation}

\begin{Lemma}\label{important2}
	Let $\Phi$ be a Young function satisfying $\left(\Phi_1\right)$ and $\left(\Phi_2\right)$
	Then for every $a, b \in \mathbb{R}$,
	$$
	\frac{\Phi(|a|)+\Phi(|b|)}{2} \geq \Phi\left(\left|\frac{a+b}{2}\right|\right)+\Phi\left(\left|\frac{a-b}{2}\right|\right) .
	$$
\end{Lemma}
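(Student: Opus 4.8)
The plan is to exploit hypothesis $(\Phi_2)$ by passing to the auxiliary function $G\colon[0,\infty)\to[0,\infty)$ defined by $G(t):=\Phi(\sqrt{t})$, which is convex by $(\Phi_2)$ and satisfies $G(0)=\Phi(0)=0$. Since $\Phi(|x|)=\Phi(\sqrt{x^2})=G(x^2)$ for every real $x$, all four terms of the desired inequality can be rewritten through $G$: setting $\xi:=\left(\tfrac{a+b}{2}\right)^2$ and $\eta:=\left(\tfrac{a-b}{2}\right)^2$ we have $\Phi\!\left(\left|\tfrac{a+b}{2}\right|\right)=G(\xi)$, $\Phi\!\left(\left|\tfrac{a-b}{2}\right|\right)=G(\eta)$, and $\tfrac{\Phi(|a|)+\Phi(|b|)}{2}=\tfrac{G(a^2)+G(b^2)}{2}$. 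Observe that no reduction on the signs of $a,b$ is required, since only the squares enter these expressions.

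The first step I would record is the elementary algebraic identity $\xi+\eta=\tfrac{a^2+b^2}{2}$, which follows by expanding the two squares so that the cross terms $\pm\tfrac{ab}{2}$ cancel; this is exactly what makes $G$, rather than $\Phi$ itself, the natural object. The second and conceptually central step is a superadditivity estimate: because $G$ is convex with $G(0)=0$, one has $G(x)+G(y)\le G(x+y)$ for all $x,y\ge 0$. Indeed, writing $x=\tfrac{x}{x+y}(x+y)+\tfrac{y}{x+y}\cdot 0$ and symmetrically for $y$, convexity together with $G(0)=0$ gives $G(x)\le\tfrac{x}{x+y}G(x+y)$ and $G(y)\le\tfrac{y}{x+y}G(x+y)$, whose sum is $G(x+y)$. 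Applying this with $x=\xi$ and $y=\eta$, and using the sum identity, yields $G(\xi)+G(\eta)\le G(\xi+\eta)=G\!\left(\tfrac{a^2+b^2}{2}\right)$.

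The final step is plain midpoint convexity of $G$, namely $G\!\left(\tfrac{a^2+b^2}{2}\right)\le\tfrac{G(a^2)+G(b^2)}{2}$. Chaining the two estimates then gives $\Phi\!\left(\left|\tfrac{a+b}{2}\right|\right)+\Phi\!\left(\left|\tfrac{a-b}{2}\right|\right)=G(\xi)+G(\eta)\le\tfrac{G(a^2)+G(b^2)}{2}=\tfrac{\Phi(|a|)+\Phi(|b|)}{2}$, which is the assertion. There is no genuine obstacle here: the only place that demands an idea is recognizing the superadditivity estimate in tandem with the identity $\xi+\eta=\tfrac{a^2+b^2}{2}$, after which the result drops out from the convexity supplied by $(\Phi_2)$. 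In particular only the convexity of $G$ and $\Phi(0)=0$ are used, so $(\Phi_1)$ plays no role in this particular estimate.
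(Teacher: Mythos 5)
Your proof is correct. The paper actually states this lemma without proof in its preliminaries (it is a known Clarkson-type inequality for Young functions with $\Phi(\sqrt{\cdot})$ convex), and your argument --- rewriting everything through $G(t)=\Phi(\sqrt{t})$, using the identity $\left(\frac{a+b}{2}\right)^2+\left(\frac{a-b}{2}\right)^2=\frac{a^2+b^2}{2}$, then superadditivity of the convex function $G$ with $G(0)=0$ followed by midpoint convexity --- is exactly the standard derivation; your observation that $(\Phi_1)$ is not needed is also accurate.
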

\begin{Remark}
	In the case when $\Phi(t)=t^{p}$, the condition $(\Phi_{2})$ cover just the case when $p\geq2$.
\end{Remark}
\begin{Lemma}\label{important3}
	Let $\Phi$ be an Young function satisfying $\left(\Phi_1\right)$ such that $\varphi=\Phi^{\prime}$ and denote by $\tilde{\Phi}$ its complementary function. Then
	$$
	\widetilde{\Phi}(\varphi(t)) \leq m \Phi(t)
	$$
	holds for any $t \geq 0$.
\end{Lemma}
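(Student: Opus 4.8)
The plan is to chain two estimates that are already on the table. First I would recall the left half of the Young-type relation \eqref{messt2}, which provides $\widetilde{\Phi}(\varphi(t)) \leq t\varphi(t)$ for every $t \geq 0$. This is precisely the equality case of the Young inequality $s\tau \leq \Phi(s) + \widetilde{\Phi}(\tau)$ specialized to $\tau = \varphi(t)$, combined with the nonnegativity of $\Phi$. Concretely, starting from the explicit formula \eqref{2.2} one has $\widetilde{\Phi}(\varphi(t)) = \int_0^{\varphi(t)} \varphi^{-1}(s)\,ds$, and the complementary-area identity $\Phi(t) + \widetilde{\Phi}(\varphi(t)) = t\varphi(t)$ yields $\widetilde{\Phi}(\varphi(t)) = t\varphi(t) - \Phi(t) \leq t\varphi(t)$.

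Second, I would invoke the upper bound in the structural condition \eqref{Phi1}, namely $t\varphi(t) \leq m\,\Phi(t)$ for all $t > 0$. Combining the two displays gives $\widetilde{\Phi}(\varphi(t)) \leq t\varphi(t) \leq m\,\Phi(t)$, which is exactly the claim for $t > 0$. In fact the identity $\widetilde{\Phi}(\varphi(t)) = t\varphi(t) - \Phi(t)$ delivers the sharper bound $\widetilde{\Phi}(\varphi(t)) \leq (m-1)\Phi(t)$, but the stated estimate with constant $m$ is all that is required in the sequel, so I would record only that.

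The degenerate case $t = 0$ is handled separately but trivially: since $\varphi(0) = 0$ by $(\varphi 1)$ and $\widetilde{\Phi}(0) = 0 = \Phi(0)$, both sides vanish and the inequality holds with equality.

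I do not expect a genuine obstacle here; the one point that deserves care is the justification of the intermediate inequality $\widetilde{\Phi}(\varphi(t)) \leq t\varphi(t)$, which must be read off from the equality case of Young's inequality (equivalently, from \eqref{messt2}) rather than from the inequality $s\tau \leq \Phi(s)+\widetilde{\Phi}(\tau)$ directly, since the latter only furnishes a lower bound on $\widetilde{\Phi}$. Once that step is secured, the upper bound in the structural hypothesis $(\Phi_1)$ does all the remaining work.
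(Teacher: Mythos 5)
Your argument is correct and is exactly the intended one: the paper states this lemma without proof, but its own display \eqref{messt2} already records $\widetilde{\Phi}(\varphi(t))\leq t\varphi(t)$, and chaining this with the upper bound $t\varphi(t)\leq m\Phi(t)$ from $(\Phi_1)$ gives the claim, with the $t=0$ case trivial. Your remark that the intermediate bound must come from the equality case of Young's inequality (or the integral formula \eqref{2.2}) rather than from the inequality itself is the right point of care, and your sharper constant $m-1$ is a correct bonus observation.
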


\begin{Lemma} \label{lema.prop}
	Let $\Phi$ be a Young function satisfying $(\Phi_{1})$ and $a,b\geq 0$. Then
	\begin{align*}
		&\min\{ a^{l}, a^{m}\} \Phi(b) \leq \Phi(ab)\leq   \max\{a^{l},a^{m}\} \Phi(b),\tag{$L_1$}\label{L1}\\
		&\Phi(a+b)\leq \mathbf{C} (G(a)+G(b)) \quad \text{with } \mathbf{C}=  \mathbf{C}(m),\tag{$L_2$}\label{L2}\\
		&\tilde {\Phi}(a+b)\leq \mathbf{\tilde C} (\tilde {\Phi}(a)+\tilde {\Phi}(b)) \quad \text{with }  \mathbf{\tilde C}=  \mathbf{\tilde C}(l),\tag{$L_3$}\label{L2'}\\
		&\Phi \text{ is Lipschitz continuous}. \tag{$L_4$}\label{L_3}
	\end{align*}
\end{Lemma}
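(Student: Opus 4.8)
Writing a proof plan for Lemma \ref{lema.prop} (the four estimates $(L_1)$–$(L_4)$).

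The four claims are logically ordered: $(L_1)$ is the engine, $(L_2)$ and $(L_3)$ are consequences, and $(L_4)$ is elementary convexity. I would begin with $(L_1)$. The plan is to rewrite the two-sided bound $(\Phi_{1})$ as control on the logarithmic derivative of the map $s\mapsto \Phi(bs)$. Fix $b>0$ and set $F(s)=\log\Phi(bs)$ for $s>0$; then $F'(s)=b\varphi(bs)/\Phi(bs)=s^{-1}\,(bs)\varphi(bs)/\Phi(bs)$, so $(\Phi_{1})$ gives $l/s\leq F'(s)\leq m/s$. Integrating between $1$ and $a$ and exponentiating yields $a^{l}\leq \Phi(ab)/\Phi(b)\leq a^{m}$ when $a\geq 1$, and the reversed pair $a^{m}\leq \Phi(ab)/\Phi(b)\leq a^{l}$ when $0<a<1$. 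In either case the smaller of $a^{l},a^{m}$ is the lower bound and the larger is the upper bound, which is precisely $(L_1)$; the boundary cases $a=0$ or $b=0$ are trivial since $\Phi(0)=0$.

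Next come $(L_2)$ and $(L_3)$ (reading $G=\Phi$ in the statement of $(L_2)$). For $(L_2)$, I would specialize $(L_1)$ to the scalar $a=2$: since $2>1$ this gives the $\Delta_{2}$-type inequality $\Phi(2t)\leq 2^{m}\Phi(t)$. Then, using $a+b\leq 2\max\{a,b\}$ and the monotonicity of $\Phi$, I get $\Phi(a+b)\leq \Phi(2\max\{a,b\})\leq 2^{m}\Phi(\max\{a,b\})\leq 2^{m}\bigl(\Phi(a)+\Phi(b)\bigr)$, so $\mathbf{C}=2^{m}$ depends only on $m$. For $(L_3)$ the idea is to transport this reasoning to $\tilde{\Phi}$, and the key step is to show that $\tilde{\Phi}$ itself satisfies a condition of type $(\Phi_{1})$. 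Using $\tilde{\varphi}=\varphi^{-1}$, the Young equality $\tilde{\Phi}(\varphi(t))=t\varphi(t)-\Phi(t)$ and the substitution $s=\varphi(t)$, one computes at $s=\varphi(t)$ that $s\tilde{\varphi}(s)/\tilde{\Phi}(s)=t\varphi(t)/\bigl(t\varphi(t)-\Phi(t)\bigr)=r/(r-1)$ with $r:=t\varphi(t)/\Phi(t)\in[l,m]$. Since $r\mapsto r/(r-1)$ is decreasing, this yields $m/(m-1)\leq s\tilde{\varphi}(s)/\tilde{\Phi}(s)\leq l/(l-1)$, so the relevant upper index of $\tilde{\Phi}$ is $l/(l-1)$, depending only on $l$. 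Applying the already-proved $(L_2)$ argument to $\tilde{\Phi}$ with its own upper index gives $\tilde{\Phi}(a+b)\leq 2^{l/(l-1)}\bigl(\tilde{\Phi}(a)+\tilde{\Phi}(b)\bigr)$, hence $\tilde{\mathbf{C}}=2^{l/(l-1)}$ depends only on $l$.

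Finally, $(L_4)$ is the softest: $\Phi$ is convex with non-decreasing, finite right derivative $\varphi$, so on any bounded interval $[0,R]$ it is Lipschitz with constant $\varphi(R)<\infty$ via the bound $|\Phi(x)-\Phi(y)|\leq \sup_{[0,R]}\varphi\,|x-y|$; thus $\Phi$ is locally Lipschitz. I expect the only genuine work to lie in $(L_3)$, namely deriving the complementary index bounds $m/(m-1)\leq s\tilde{\varphi}(s)/\tilde{\Phi}(s)\leq l/(l-1)$ from $(\Phi_{1})$: this is where the formula $\tilde{\Phi}(\tau)=\int_0^\tau \varphi^{-1}$ from \eqref{2.2} and the Young equality are indispensable, and correctly inverting the inequality $l\leq t\varphi(t)/\Phi(t)\leq m$ through $s=\varphi(t)$ is the technical crux. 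Everything else follows directly from $(L_1)$ and elementary convexity.
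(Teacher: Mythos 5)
Your proof is correct. There is nothing in the paper to compare it against: Lemma~\ref{lema.prop} appears in the preliminaries as a recalled fact, with the reader pointed to the standard Orlicz-space references, and no proof is given there; your argument is the standard one from that literature. For $(L_1)$, the logarithmic-derivative bound $l/s \leq \frac{d}{ds}\log\Phi(bs)\leq m/s$ (valid a.e., with $\Phi$ absolutely continuous, so the integration from $1$ to $a$ is legitimate) gives exactly the two-sided estimate, the roles of $a^{l}$ and $a^{m}$ swapping at $a=1$. For $(L_2)$, specializing $(L_1)$ to $a=2$ and using monotonicity gives $\mathbf{C}=2^{m}$. For $(L_3)$, the computation $s\varphi^{-1}(s)/\tilde{\Phi}(s)=r/(r-1)$ at $s=\varphi(t)$, with $r=t\varphi(t)/\Phi(t)\in[l,m]$, together with the fact that $r\mapsto r/(r-1)$ is decreasing, transfers $(\Phi_{1})$ to $\tilde{\Phi}$ with indices $m/(m-1)$ and $l/(l-1)$, and repeating the $(L_2)$ argument yields $\tilde{\mathbf{C}}=2^{l/(l-1)}$; note it is the hypothesis $l>1$ that keeps $r-1$ bounded away from zero here. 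Three reading points you resolved correctly and should state explicitly in a final write-up: $G$ in $(L_2)$ is a typo for $\Phi$; $(L_4)$ can only mean \emph{locally} Lipschitz, since by $(L_1)$ one has $\Phi(t)\geq t^{l}\Phi(1)$ for $t\geq 1$ with $l>1$, so $\Phi$ is superlinear and never globally Lipschitz; and the use of $\varphi^{-1}$ in $(L_3)$ is licensed by the paper's formula \eqref{2.2}, which already presupposes $\varphi$ invertible --- for a merely non-decreasing, right-continuous $\varphi$ one would replace it by the generalized inverse and handle points outside the range of $\varphi$ by a limiting argument.
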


We need the notion of comparison between Young functions.

\begin{Definition}\label{definition22}
	Given two Young functions $\Phi$ and $\Psi$, we say that $\Phi \leq \Psi$ if there exists a constant $c>0$ and $t_0>0$ such that $A(t) \leq B(c t)$, for every $t \geq t_0$.
	
	Whenever $\Phi \leq \Psi$ and $\Psi \leq \Phi$ we say that $\Phi$ and $\Psi$ are equivalent Young functions and this fact will be denoted by $\Phi \sim \Psi$.
	
	Finally, we say that $\Phi$ decreases essentially more rapidly than $\Psi$, denoted $\Phi\prec\prec\Psi$, if for any $c>0$,
	$$
	\lim _{t \rightarrow \infty} \frac{\Phi(ct)}{\Psi(t)}=0 .
	$$
\end{Definition}

\subsection{Orlicz and Orlicz-Sobolev spaces.}
The Orlicz classe $K^{\Phi}(\Omega)$ is defined by
$$
K^{\Phi}(\Omega):=\left\{ \text{u real value measurable function on} \Omega:\ \underset{\Omega}{\int}\Phi(\left| u(x)\right| )dx<\infty \right\}.
$$
It is notable that this Orlicz class is a vector space if and only if $\Phi$ satisfies the $\Delta_2$-condition.	
The orlicz space $L^{\Phi}(\Omega)$ is the linear span of $K^{\Phi}(\Omega)$. 
The definition of $L^{\Phi}(\Omega)$ follows that
$$
K^{\Phi}(\Omega)\subset L^{\Phi}(\Omega).
$$
Furthermore, we have the correspondence if and only if $\Phi$ satisfies the $\Delta_2$condition.

The space $L^{\Phi}(\Omega)$  is a Banach space endowed with the Luxemburg norm
\begin{equation}\label{norm1}
	\parallel u\parallel_{L^{\Phi}(\Omega)}:=\inf\left\{ \xi>0,\;\; \int_{\Omega}\Phi\left(\frac{|u(x)|}{\xi}\right)dx \leq1\right\}.
\end{equation}
$L^{\Phi}(\Omega)$  is separable if and only if $\Phi$ satisfy the $\Delta_2$condition and it is reflexive if and only if $\Phi$ and $\tilde{\Phi}$ satisfy the $\Delta_2$condition.

We consider $E^{\Phi}(\Omega)$ as the closure in $L^{\Phi}(\Omega)$ of all bounded measurable functions. Thus, $E^{\Phi}(\Omega)$ is a separable Banach space and
$$
E^{\Phi}(\Omega)\subset K^{\Phi}(\Omega),
$$
with equality, if and only if $\Phi$ satisfies the $\Delta_2$-condition. We have the following H\"{o}lder's type inequality
$$
\underset{\Omega}{\int}\left|uv\right|dx\leq2\parallel u\parallel_{L^{\Phi}(\Omega)}\parallel u\parallel_{L^{\tilde{\Phi}}(\Omega)}.
$$
The dual spaces of $E^{\Phi}(\Omega)$ and $E^{\tilde{\Phi}}(\Omega)$ are given by
$$
\left(E^{\widetilde{\Phi}}\right)^{\prime}=L^\Phi, \quad \text { and } \quad\left(E^\Phi\right)^{\prime}=L^{\widetilde{\Phi}}.
$$
The Orlicz-Sobolev space is defined by
$$
W^{1}L^{\Phi}\left(\Omega\right):=\left\{u \in L^{\Phi}\left(\Omega\right);\;\; \nabla u\in L^{\Phi}\left(\Omega\right) \right\}.
$$
$W^{1}L^{\Phi}\left(\Omega\right)$ is a Banach space equipped with the following norm
$$
\parallel u\parallel_{W^{1}L^{\Phi}}:=\parallel u\parallel_{L^{\Phi}\left(\Omega\right)}+\parallel \nabla u\parallel_{L^{\Phi}\left(\Omega\right)},
$$
$W^{1}L^{\Phi}\left(\Omega\right)$ is separable if and only if $\Phi$ satisfy the $\Delta_2$-condition and $W^{1}L^{\Phi}\left(\Omega\right)$ is reflexive if and only if $\Phi$ and $\tilde{\Phi}$ satisfy the $\Delta_2$-condition. The space $W^{1}E^{\Phi}\left(\Omega\right)$ is defined in an analogous way and is a closed subspace of $W^{1}L^{\Phi}(\Omega)$. Furthermore, $W^{1}E^{\Phi}\left(\Omega\right)$ is a separable Banach space.

We define $W_0^1 L^\Phi(\Omega)$ as the weak* closure of $C_c^{\infty}(\Omega)$ in $W^1 L^\Phi(\Omega)$, hence $W_0^1 L^\Phi(\Omega)$ is a weak* closed subset of the dual of a separable space.

In order for the Sobolev immersion theorem to hold, one need to impose some grawth conditions on $\Phi$. Following \cite{cianchi2004optimal}, we require $\Phi$ to verify
\begin{equation}\label{emdh1}
	\int_K^{\infty}\left(\frac{t}{\Phi(t)}\right)^{\frac{1}{N-1}} d t=\infty,
\end{equation}
\begin{equation}\label{embh2}
	\int_0^\gamma\left(\frac{t}{\Phi(t)}\right)^{\frac{1}{N-1}} d t<\infty,
\end{equation}
for some constants $K, \gamma>0$.
Given a Young function $\Phi$ that satisfes \eqref{emdh1} and \eqref{embh2} its Orlicz-Sobolev conjugate is defined as
\begin{equation}\label{HN}
	\Phi_N(t)=\Phi \circ H^{-1}(t),
\end{equation}
where
$$
H(t)=\left(\int_0^t\left(\frac{\tau}{\Phi(\tau)}\right)^{\frac{1}{N-1}} d \tau\right)^{\frac{N-1}{N}} .
$$
Notice that the function given in the examples satisfy (\ref{emdh1}) and (\ref{embh2}).\\ 
The following fundamental Orlicz-Sobolev embedding Theorem can be found in \cite{cianchi2004optimal}.

\begin{Theorem}\label{Cianchiembedding}
	Let $\Phi$ be a Young function satisfying \eqref{emdh1} and \eqref{embh2} and let $\Phi_N$ be defined in \eqref{HN}. Then the embedding $W_0^{1}L^{\Phi}(\Omega) \hookrightarrow L^{\Phi_N}(\Omega)$ is continuous. Moreover, the Young function $\Phi_N$ is optimal in the class of Orlicz spaces.
	
	Finally, given $\Psi$ any Young funcion, the embedding $W_0^{1}L^{\Phi}(\Omega) \hookrightarrow L^\Psi(\Omega)$ is compact if and only if $\Psi <<\Phi_N$.
\end{Theorem}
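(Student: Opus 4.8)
The plan is to establish the three assertions in turn, using symmetric rearrangement to reduce the continuity inequality to a sharp one-dimensional Hardy problem whose extremal behaviour produces exactly $\Phi_N$. For the continuity, let $u\in W_0^{1}L^{\Phi}(\Omega)$ and let $u^{\ast}$ be its symmetric decreasing rearrangement, supported on the ball $\Omega^{\ast}$ with $|\Omega^{\ast}|=|\Omega|$. Since $\Phi$ is convex and nondecreasing, the P\'olya--Szeg\H{o} principle in the Orlicz setting gives
\[
\int_{\Omega^{\ast}}\Phi\bigl(|\nabla u^{\ast}|\bigr)\,dx\le\int_{\Omega}\Phi\bigl(|\nabla u|\bigr)\,dx,
\]
while $\|u\|_{L^{\Phi_N}(\Omega)}=\|u^{\ast}\|_{L^{\Phi_N}(\Omega^{\ast})}$ because every Orlicz norm is rearrangement invariant. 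Hence I may assume $u=u^{\ast}$ is radial and decreasing, $u^{\ast}(x)=v(|x|)$ with $v(R)=0$, so that in polar coordinates the gradient energy equals $c_N\int_0^R\Phi(|v'(r)|)\,r^{N-1}\,dr$ for a dimensional constant $c_N$.

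Writing $v(r)=\int_r^{R}|v'(s)|\,ds$ and performing the change of variable dictated by $H$ in \eqref{HN}, the control of $\|u^{\ast}\|_{L^{\Phi_N}}$ by the gradient energy becomes a one-dimensional Hardy-type inequality. The weight $(\tau/\Phi(\tau))^{1/(N-1)}$ in $H$ is precisely the Jacobian converting $\int\Phi(|v'|)\,r^{N-1}\,dr$ into the level-set quantity governing $\Phi_N=\Phi\circ H^{-1}$, and \eqref{emdh1}--\eqref{embh2} guarantee that $H$ is well defined, strictly increasing, with the correct behaviour at $0$ and $\infty$. Carrying out the reduction yields $\|u\|_{L^{\Phi_N}(\Omega)}\le C\|\nabla u\|_{L^{\Phi}(\Omega)}$, the claimed continuous embedding. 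For optimality I would show that any continuous embedding $W_0^{1}L^{\Phi}(\Omega)\hookrightarrow L^{\Theta}(\Omega)$ forces $\Theta\le\Phi_N$ near infinity: inverting the substitution above produces a one-parameter family of radial functions that nearly saturate the inequality, and testing the assumed embedding on them as the parameter tends to infinity shows $\Theta$ cannot outgrow $\Phi_N$, so $L^{\Phi_N}$ is the smallest Orlicz target.

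For the compactness equivalence, the direction $\Psi\prec\prec\Phi_N\Rightarrow$ compact factors the map as $W_0^{1}L^{\Phi}(\Omega)\hookrightarrow L^{\Phi_N}(\Omega)\hookrightarrow L^{\Psi}(\Omega)$. A bounded sequence $\{u_n\}\subset W_0^{1}L^{\Phi}(\Omega)$ is bounded in $L^{\Phi_N}(\Omega)$ by the first part and, since $\Phi$ is superlinear, bounded in $W_0^{1,1}(\Omega)$, hence admits an a.e.\ convergent subsequence with limit $u$ by Rellich--Kondrachov. I then upgrade this to convergence in the Luxemburg norm of $L^{\Psi}$ by a Vitali equi-integrability argument. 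Fix $\lambda>0$: applying $\Psi\prec\prec\Phi_N$ with the constant $c=1/\lambda$ gives $\Psi(s/\lambda)\le\varepsilon\,\Phi_N(s)$ for $s$ large, so on the super-level sets the quantity $\int\Psi(|u_n-u|/\lambda)\,dx$ is dominated by $\varepsilon\sup_n\int_{\Omega}\Phi_N(|u_n-u|)\,dx$, while on the complementary bounded region dominated convergence applies; since $\lambda>0$ is arbitrary this yields $\|u_n-u\|_{L^{\Psi}}\to0$, and the freedom in $c$ is exactly what lets the argument run without assuming $\Delta_2$ for $\Psi$. The reverse direction is contrapositive: if $\Psi\not\prec\prec\Phi_N$, then $\Psi(c\,t_k)\ge\delta\,\Phi_N(t_k)$ along some $t_k\to\infty$, and concentrating a fixed extremal profile at an interior point of $\Omega$ produces a sequence bounded in $W_0^{1}L^{\Phi}(\Omega)$ with no $L^{\Psi}$-convergent subsequence, defeating compactness.

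The decisive step is the one-dimensional core: proving the sharp Hardy inequality that delivers exactly $\Phi_N=\Phi\circ H^{-1}$, rather than a strictly larger Orlicz target, is where the full strength of the construction of \cite{cianchi2004optimal} is required, and it is simultaneously what forces optimality. By comparison the compactness equivalence is a softer consequence of the critical embedding combined with standard equi-integrability and concentration arguments.
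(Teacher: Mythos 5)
The first thing to note is that the paper itself offers no proof of this statement: Theorem \ref{Cianchiembedding} is quoted as a background result from \cite{cianchi2004optimal}, and the only ``proof'' in the paper is that citation. Your proposal must therefore be measured against the literature argument it gestures at, and there it falls short of being a proof. The strategy you describe --- P\'olya--Szeg\H{o} symmetrization plus rearrangement invariance of the Luxemburg norm to reduce to radial decreasing functions, then a one-dimensional Hardy-type inequality whose weight is dictated by $H$ --- is indeed the skeleton of Cianchi's argument. But the decisive step, the sharp one-dimensional inequality that produces exactly $\Phi_N=\Phi\circ H^{-1}$ (and with it the optimality assertion), is never carried out: you write ``carrying out the reduction yields'' the embedding and then concede that this step ``is where the full strength of the construction of \cite{cianchi2004optimal} is required.'' As a standalone argument this is circular --- the entire content of the theorem is deferred to the very reference being proved --- so what you have is an accurate roadmap of the known proof, not a proof.

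There is also a concrete technical flaw in the compactness half. You dominate $\int_\Omega \Psi(|u_n-u|/\lambda)\,dx$ by $\varepsilon\,\sup_n\int_\Omega \Phi_N(|u_n-u|)\,dx$, but boundedness of $\{u_n-u\}$ in the Luxemburg norm of $L^{\Phi_N}$ does \emph{not} bound the modular $\int_\Omega\Phi_N(|u_n-u|)\,dx$ unless $\Phi_N$ satisfies the $\Delta_2$-condition, which is not part of the hypotheses and can genuinely fail for the critical conjugate $\Phi_N$ (this distinction between norm and modular is exactly the point of working without $\Delta_2$ elsewhere in this paper). The repair is the one your own remark about ``freedom in $c$'' points to, but applied differently: setting $M:=\sup_n\|u_n-u\|_{L^{\Phi_N}(\Omega)}$, use $\Psi\prec\prec\Phi_N$ in the form $\Psi(s/\lambda)\le\varepsilon\,\Phi_N(s/M)$ for $s$ large, so that the majorant becomes the normalized modular $\int_\Omega\Phi_N(|u_n-u|/M)\,dx\le 1$. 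As written, the step fails. The factorization through $L^{\Phi_N}$ and the contrapositive concentration argument for the reverse implication are fine in outline, but both rest on the continuity part, which, as explained above, has not actually been established.
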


It is easy to see that $\Phi<< \Phi_N$ and hence $W_0^{1}L^{\Phi}(\Omega) \subset L^\Phi(\Omega)$ is compact.

\subsection{Weighted Orlicz spaces}\cite{bloom1994weighted,el2019nonlinear,krbec1992imbedding,krbec1991imbeddings,osanccliol2014inclusions}\\
The weighted Orlicz space $L_{\omega}^{\Phi}(\Omega)$ is the linear span of the orlicz classe $K_{\omega}^{\Phi}(\Omega)$ which given by
\begin{equation*}
	K_{\omega}^{\Phi}(\Omega):=\left\lbrace \text{u measurable and defined in } \Omega:\ \underset{\Omega}{\int}\omega(x)\Phi(\left| u(x)\right| )dx<\infty \right\rbrace,
\end{equation*}
it is notable that this orlicz classe is a vector space if and only if $\Phi$ satisfies \textbf{$\Delta_{2}$-condition}. The definition of $L^{M}(\Omega)$ follows that
\begin{equation*}
	K_{\omega}^{\Phi}(\Omega)\subset L_{\omega}^{\Phi}(\Omega).
\end{equation*}
Furthermore, we have the correspondence if and only if $\Phi$ satisfies \textbf{$\Delta_{2}$-condition}.\\

The space $L_{\omega}^{\Phi}(\Omega)$  is a Banach space endowed with the Luxemburg norm
\begin{equation}\label{norm1}
	\parallel u\parallel_{L_{\omega}^{\Phi}(\Omega)}:=\inf\left\{ \xi>0,\;\; \int_{\Omega}\omega(x)\Phi\left(\frac{|u(x)|}{\xi}\right)dx \leq1\right\},
\end{equation}
separable if and only if $\Phi$ satisfy the \textbf{$\Delta_{2}$-condition} and it is reflexive if and only if $\Phi$ and $\tilde{\Phi}$ satisfy the \textbf{$\Delta_{2}$-condition}.
\begin{Definition}
	We consider $E_{\omega}^{\Phi}(\Omega)$ as The closure in $L_{\omega}^{\Phi}(\Omega)$ of all bounded measurable functions.
\end{Definition}
\begin{Remark}
	\begin{itemize}
		\item[(i)] $E_{\omega}^{\Phi}(\Omega)$ is a separable Banach space and $E_{\omega}^{\Phi}(\Omega)\subset K_{\omega}^{\Phi}(\Omega)$
		\item[(ii)] $E_{\omega}^{\Phi}(\Omega)=K_{\omega}^{\Phi}(\Omega)$, if and only if $\Phi$ satisfies \textbf{$\Delta_{2}$-condition}.
	\end{itemize}
\end{Remark} 
\begin{Remark}
	We have the following H$\ddot{o}$lder’s type inequality
	\begin{equation}\label{weithedh}
		\underset{\Omega}{\int}\omega\left|uv\right|dx\leq2\parallel u\parallel_{L_{\omega}^{\Phi}(\Omega)}\parallel u\parallel_{L_{\omega}^{\tilde{\Phi}}(\Omega)}.
	\end{equation}
\end{Remark}
The dual spaces of $E_{\omega}^{\Phi}(\Omega)$ and $E_{\omega}^{\widetilde{\Phi}}(\Omega)$ are given by
\begin{equation}
	\left(E_{\omega}^{\widetilde{\Phi}}\right)^{\prime}=L_{\omega}^\Phi, \quad \text { and } \quad\left(E_{\omega}^\Phi\right)^{\prime}=L_{\omega}^{\widetilde{\Phi}}.
\end{equation}
The weighted Orlicz-Sobolev space is defined by
\begin{equation*}
	W_{\omega}^{1}L^{\Phi}\left(\Omega\right):=\left\{u \in L^{\Phi}\left(\Omega\right);\;\; \left\|\nabla u\right\|_{L_{\omega}^{\Phi}\left(\Omega\right)}<\infty \right\}.
\end{equation*}
It is a Banach space equipped with the following norm
\begin{equation*}
	\parallel u\parallel_{W_{\omega}^{1}L^{\Phi}}:=\parallel u\parallel_{L^{\Phi}\left(\Omega\right)}+\parallel \nabla u\parallel_{L_{\omega}^{\Phi}\left(\Omega\right)},
\end{equation*}
separable if and only if $\Phi$ satisfy the \textbf{$\Delta_{2}$-condition} and it is reflexive if and only if $\Phi$ and $\tilde{\Phi}$ satisfy the \textbf{$\Delta_{2}$-condition}. The space $W_{\omega}^{1}E^{\Phi}\left(\Omega\right)$ is defined in an analogous way and is a closed subspace of $W_{\omega}^{1}L^{\Phi}(\Omega)$. Furthermore, $W_{\omega}^{1}E^{\Phi}\left(\Omega\right)$ is a separable Banach space.\\
\begin{Proposition}\label{important}
	Let $\Phi$ be a Young function satisfying $\left(\Phi_1\right)$. Then for all $u\in L_{\omega}^{\Phi}\left(\Omega\right)$
	$$\min \left\{\parallel u\parallel_{L_{\omega}^{\Phi}\left(\Omega\right)}^{l}, \parallel u\parallel_{L_{\omega}^{\Phi}\left(\Omega\right)}^{m}\right\} \leq \int_{\Omega}\omega(x)\Phi(\left| u\right| )dx$$
	and
	
	$$\int_{\Omega}\omega(x)\Phi(\left|u\right|)dx\leq \max \left\{\parallel u\parallel_{L_{\omega}^{\Phi}\left(\Omega\right)}^{l}, \parallel u\parallel_{L_{\omega}^{\Phi}\left(\Omega\right)}^{m}\right\}$$.
\end{Proposition}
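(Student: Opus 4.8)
The plan is to compare the modular $\rho(u):=\int_{\Omega}\omega(x)\Phi(|u|)\,dx$ with the Luxemburg norm $t:=\parallel u\parallel_{L_{\omega}^{\Phi}(\Omega)}$ by exploiting the scaling estimate \eqref{L1} of Lemma \ref{lema.prop}. The statement is trivial when $u=0$, so I would assume $u\neq 0$, whence $t>0$. The whole argument rests on the single normalization identity $\rho(u/t)=1$, from which both inequalities follow by one application of \eqref{L1} each.

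First I would establish $\rho(u/t)=1$. Since $(\Phi_1)$ forces $t\varphi(t)/\Phi(t)\le m$, the function $\Phi$ satisfies the $\Delta_2$-condition (by the equivalence between $\Delta_2$ and the upper bound $t\varphi(t)/\Phi(t)\le m$ recalled above), so $\rho(v)<\infty$ for every $v\in L_{\omega}^{\Phi}(\Omega)$ and the map $\xi\mapsto\rho(u/\xi)$ is finite and continuous on $(0,\infty)$. Taking $\xi_n\downarrow t$ with $\rho(u/\xi_n)\le 1$ and using monotone convergence (the integrands increase to $\omega\,\Phi(|u|/t)$) gives $\rho(u/t)\le 1$; conversely, for $\xi\uparrow t$ the definition of the infimum gives $\rho(u/\xi)>1$, and passing to the limit (again legitimate because all the modulars are finite under $\Delta_2$) yields $\rho(u/t)\ge 1$. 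Hence $\rho(u/t)=1$.

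For the upper bound I would write $|u|=t\cdot(|u|/t)$ and apply the right-hand inequality in \eqref{L1} with $a=t$, $b=|u|/t$, obtaining $\Phi(|u|)\le\max\{t^{l},t^{m}\}\,\Phi(|u|/t)$ pointwise; multiplying by $\omega$ and integrating gives $\rho(u)\le\max\{t^{l},t^{m}\}\,\rho(u/t)=\max\{\parallel u\parallel^{l},\parallel u\parallel^{m}\}$. For the lower bound I would instead apply the right-hand inequality in \eqref{L1} with $a=1/t$, $b=|u|$, giving $\Phi(|u|/t)\le\max\{t^{-l},t^{-m}\}\,\Phi(|u|)$; integrating and using $\max\{t^{-l},t^{-m}\}=1/\min\{t^{l},t^{m}\}$ together with $\rho(u/t)=1$ yields $1\le\rho(u)/\min\{t^{l},t^{m}\}$, that is $\min\{\parallel u\parallel^{l},\parallel u\parallel^{m}\}\le\rho(u)$.

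I expect the genuine obstacle to be the normalization $\rho(u/t)=1$ rather than the two applications of \eqref{L1}, which are purely mechanical. The inequality $\rho(u/t)\le 1$ holds for any Young function, but the reverse inequality, which is exactly what powers the lower bound, requires the finiteness and continuity of the modular; this is precisely where the $\Delta_2$-condition furnished by the upper bound $m$ in $(\Phi_1)$ is indispensable. I would therefore invoke $\Delta_2$ explicitly, justify the two limit passages by the appropriate convergence theorem, and dispose of the case $u=0$ separately.
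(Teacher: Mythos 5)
Your argument is correct. Note, however, that the paper states Proposition \ref{important} without any proof at all (it is treated as a standard fact about the relation between the modular and the Luxemburg norm under $(\Phi_1)$), so there is no in-paper argument to compare against; what you have written is the classical proof and it holds up: the normalization $\rho(u/t)=1$ with $t=\|u\|_{L^{\Phi}_{\omega}(\Omega)}$, obtained from monotone convergence on one side and from finiteness of the modular (guaranteed by the $\Delta_2$-condition, which follows from the upper bound $m$ in $(\Phi_1)$) on the other, combined with two applications of \eqref{L1}. One small remark: the only place where you genuinely need the continuity of $\xi\mapsto\rho(u/\xi)$ is the inequality $\rho(u/t)\geq 1$, and you can bypass it entirely. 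For any $\xi<t$ the definition of the Luxemburg norm gives $\rho(u/\xi)>1$, and applying \eqref{L1} with $a=1/\xi$ yields the scalar inequality $\min\{\xi^{l},\xi^{m}\}<\rho(u)$; letting $\xi\uparrow t$ in this inequality of real numbers gives the lower bound directly, with no limit passage inside the integral and hence no appeal to $\Delta_2$ for that half of the statement. This is worth knowing because the authors use modular--norm comparisons of this type elsewhere in the paper in a setting where they explicitly avoid the $\Delta_2$-condition. As written, though, your proof is complete and correct.
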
	
%
\section{Double Weighted Orlicz-Sobolev spaces}

We define the double weighted Orlicz-Sobolev space as follows
$$
W_{\omega,\omega_{1}}^{1}L^{\Phi,\Psi}\left(\Omega\right):=\left\{u \in L_{\omega_{1}}^{\Psi}\left(\Omega\right),\;\; \left|\nabla u\right|  \in L_{\omega}^{\Phi}\left(\Omega\right)\right\},
$$
equipped with the following norm
\begin{equation}\label{norm2}
	\parallel u\parallel_{W_{\omega,\omega_{1}}^{1}L^{\Phi,\Psi}\left(\Omega\right)}:=\parallel u\parallel_{L_{\omega_{1}}^{\Psi}\left(\Omega\right)}+\parallel \nabla u\parallel_{L_{\omega}^{\Phi}\left(\Omega\right)}.
\end{equation}

\begin{Proposition}
	\begin{itemize}
		\item [(i)] The Space $W_{\omega,\omega_{1}}^{1}L^{\Phi,\Psi}\left(\Omega\right)$ is Banach with respect to the norm $\parallel .\parallel_{W_{\omega,\omega_{1}}^{1}L^{\Phi,\Psi}\left(\Omega\right)}$.
		\item [(ii)] The Space $W_{\omega,\omega_{1}}^{1}L^{\Phi,\Psi}\left(\Omega\right)$ is separable if $\Phi$ and $\Psi$ satisfy the $\Delta_2$-condition, and $W_{\omega,\omega_{1}}^{1}L^{\Phi,\Psi}\left(\Omega\right)$ is reflexive if in addition $\widetilde{\Phi}$ and $\widetilde{\Psi}$ satisfy the $\Delta_2$-condition.
	\end{itemize}
\end{Proposition}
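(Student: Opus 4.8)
The plan is to realize $\W$ as a closed subspace of a finite product of single weighted Orlicz spaces, so that all three assertions reduce to the corresponding facts for the factors already recorded in Section~\ref{sec2}. Concretely, consider the linear map
\[
T\colon \W \longrightarrow L_{\omega_{1}}^{\Psi}\left(\Omega\right)\times\left(L_{\omega}^{\Phi}\left(\Omega\right)\right)^{N},\qquad Tu=\left(u,\nabla u\right),
\]
and equip the target with the norm $\|(f,g)\|:=\|f\|_{L_{\omega_{1}}^{\Psi}(\Omega)}+\|\,|g|\,\|_{L_{\omega}^{\Phi}(\Omega)}$, which is equivalent to the standard product norm. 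By the very definition \eqref{norm2} of the norm on $\W$, the map $T$ is a linear isometry onto its image. Since a finite product of Banach (resp. separable, resp. reflexive) spaces again has the analogous property, and since these properties are inherited by closed subspaces and are invariant under equivalent renorming, it suffices to prove that $T(\W)$ is closed in the product and then to invoke the properties of $L_{\omega}^{\Phi}(\Omega)$ and $L_{\omega_{1}}^{\Psi}(\Omega)$ stated in Section~\ref{sec2}.

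For part (i), let $(u_{n})$ be Cauchy in $\W$. By the isometry, $(u_{n})$ is Cauchy in $L_{\omega_{1}}^{\Psi}(\Omega)$ and $(\nabla u_{n})$ is Cauchy in $\left(L_{\omega}^{\Phi}(\Omega)\right)^{N}$; since both factors are Banach, there exist $u\in L_{\omega_{1}}^{\Psi}(\Omega)$ and $v\in\left(L_{\omega}^{\Phi}(\Omega)\right)^{N}$ with $u_{n}\to u$ and $\nabla u_{n}\to v$ in the respective norms. The decisive step is to identify $v$ with the distributional gradient $\nabla u$. Because $\omega,\omega_{1}\geq 1$ and $\Omega$ is bounded, convergence in the weighted Luxemburg norms forces convergence in $L^{1}_{\mathrm{loc}}(\Omega)$: on a compact set $K$ one estimates $\int_{K}|u_{n}-u|\,dx\leq\int_{\Omega}\omega_{1}|u_{n}-u|\mathbf{1}_{K}\,dx$ and applies the weighted Hölder inequality \eqref{weithedh} together with the local integrability of the weights (and likewise for $\nabla u_{n}$ with $\omega$). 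Passing to the limit in $\int_{\Omega}u_{n}\,\partial_{i}\phi\,dx=-\int_{\Omega}\partial_{i}u_{n}\,\phi\,dx$ for every $\phi\in C_{c}^{\infty}(\Omega)$ then yields $v=\nabla u$ in the sense of distributions; hence $u\in\W$ with $u_{n}\to u$ in $\W$, which is exactly the closedness of $T(\W)$.

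Once closedness is in hand, part (ii) is immediate. Under the $\Delta_{2}$-condition on $\Phi$ and $\Psi$, the factors $L_{\omega}^{\Phi}(\Omega)$ and $L_{\omega_{1}}^{\Psi}(\Omega)$ are separable, so their finite product is separable and the closed subspace $T(\W)\cong\W$ is separable as well. If in addition $\widetilde{\Phi}$ and $\widetilde{\Psi}$ satisfy the $\Delta_{2}$-condition, then each factor is reflexive, hence so is the finite product, and a closed subspace of a reflexive space is reflexive; therefore $\W$ is reflexive.

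The only genuinely analytic point is the identification $v=\nabla u$ in part (i); everything else is a formal transfer of structure through the isometry $T$. I expect the verification that weighted-Orlicz convergence forces $L^{1}_{\mathrm{loc}}$ convergence — and thereby legitimizes passing the derivative through the limit in the distributional pairing — to be the only step requiring care, and it is handled cleanly by \eqref{weithedh} together with the normalizations $\omega,\omega_{1}\geq 1$.
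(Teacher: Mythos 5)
Your proposal is correct and follows essentially the same route as the paper: a Cauchy sequence yields limits $u$ and $v$ in $L_{\omega_{1}}^{\Psi}(\Omega)$ and $(L_{\omega}^{\Phi}(\Omega))^{N}$, the identification $v=\nabla u$ is obtained by testing against $C_{c}^{\infty}(\Omega)$ and controlling $\int|u_{n}-u|\rho$ via $\omega_{1}\geq 1$ and the weighted H\"older inequality \eqref{weithedh}, and (ii) is deduced by viewing $u\mapsto(u,\nabla u)$ as an isometry into the product of the factor spaces. If anything, you are slightly more careful than the paper in stating explicitly that the image is a \emph{closed} subspace of the product and that separability and reflexivity pass to closed subspaces.
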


\begin{proof}
	$(i)$ Let $\left\lbrace u_{n}\right\rbrace_{n}$ be a Cauchy sequence in $W_{\omega,\omega_{1}}^{1}L^{\Phi,\Psi}\left(\Omega\right)$.
	Then, $u_{n}$ and $\frac{\partial u_{n}}{\partial x_{i}}, \ i=1,...,N$ are Cauchy sequences in $ L_{\omega_{1}}^{\Psi}\left(\Omega\right)$ and $L_{\omega}^{\Phi}\left(\Omega\right)$, respectively. Since  the spaces $ L_{\omega_{1}}^{\Psi}\left(\Omega\right)$ and $L_{\omega}^{\Phi}\left(\Omega\right)$ are Banach, then
	$$
	u_{n}\longrightarrow u\ \text{in}\ L_{\omega_{1}}^{\Psi}\left(\Omega\right)
	$$
	$$
	\frac{\partial u_{n}}{\partial x_{i}}\longrightarrow v_{i} \ \text{in}\ L_{\omega}^{\Phi}\left(\Omega\right), \ i=1,...,N.
	$$
	Now, we will show that each $v_{i}$ coincides with $\frac{\partial u_{n}}{\partial x_{i}}$ in the distributional sense. Indeed, For every $\rho\in C^{\infty}_{0}(\Omega),$ $\rho>0$, we have
	$$
	\begin{aligned}
		\left|\int_{\Omega}u_n\rho dx-\int_{\Omega}u\rho dx\right|&\leq\int_{\Omega}\left|u_{n}-u\right|\rho dx\leq\int_{\Omega}\omega_{1}^{2}\left|u_{n}-u\right|\rho dx\\
		&\leq2\left\|u_{n}-u \right\|_{L_{\omega_{1}}^{\Psi}\left(\Omega\right)}\left\|\omega_{1}\rho \right\|_{L_{\omega_{1}}^{\tilde{\Psi}}\left(\Omega\right)}\\
		&\leq2\left\|u_{n}-u \right\|_{L_{\omega_{1}}^{\Psi}\left(\Omega\right)}\left\|\omega_{1} \right\|_{L_{\omega_{1}}^{\tilde{\Psi}}\left(supp\left\lbrace\rho\right\rbrace \right)}\left\|\rho \right\|_{\infty}
	\end{aligned}
	$$
	Since $u_{n}\longrightarrow u\ \text{in}\ L_{\omega_{1}}^{\Psi}\left(\Omega\right)$, then for all $\rho\in C^{\infty}_{0}$
	$$
	\int_{\Omega}u_{n}\rho dx\longrightarrow\int_{\Omega}u\rho dx.
	$$
	Similarly, since $\frac{\partial u_{n}}{\partial x_{i}}\longrightarrow v_{i}\ \text{in}\ L_{\omega}^{\Phi}\left(\Omega\right), \ i=1,...,N.$, we get for all $\rho\in C^{\infty}_{0}$
	\begin{equation*}
		\int_{\Omega}\frac{\partial u_{n}}{\partial x_{i}}\rho dx\longrightarrow\int_{\Omega}v_{i}\rho dx.
	\end{equation*}
	This yields
	\begin{equation*}
		\begin{aligned}
			\int_{\Omega}v_{i}\rho dx&=\underset{n\longrightarrow\infty}{\lim}\int_{\Omega}\frac{\partial u_{n}}{\partial x_{i}}\rho dx\\
			&=-\underset{n\longrightarrow\infty}{\lim}\int_{\Omega}u_{n}\frac{\partial \rho}{\partial x_{i}} dx\\
			&=-\int_{\Omega}u\frac{\partial \rho}{\partial x_{i}} dx
		\end{aligned}
	\end{equation*}
	It follows that $v_{i}=\frac{\partial u}{\partial x_{i}}$. Hence, $u_{n}\longrightarrow u$ in $W_{\omega,\omega_{1}}^{1}L^{\Phi,\Psi}\left(\Omega\right)$. The proof is complete.

	$(ii)$ Consider
	$$
	\begin{aligned}
		B:W_{\omega,\omega_{1}}^{1}L^{\Phi,\Psi}\left(\Omega\right)&\longrightarrow L^{\Psi}_{\omega_{1}}(\Omega)\times \left( L^{\Phi}_{\omega}(\Omega)\right)^{N} \\
		u&\longmapsto\left(u,\nabla u\right)
	\end{aligned}
	$$
	By the definition of the Norm $\left\|u\right\|_{W_{\omega,\omega_{1}}^{1}L^{\Phi,\Psi}\left(\Omega\right)}$ the map $B$ is an isometry.
	Then the space $W_{\omega,\omega_{1}}^{1}L^{\Phi,\Psi}\left(\Omega\right)$ can be isometrically identified by $L^{\Psi}_{\omega_{1}}(\Omega)\times \left( L^{\Phi}_{\omega}(\Omega)\right)^{N}$ which is separable if $\Phi$ with $\Psi$ satisfies the $\Delta_2$-condition and
	is reflexive if in addition $\tilde{\Phi}$ and $\tilde{\Psi}$ satisfy the $\Delta_2$-condition. The proof is complete.
\end{proof}

We define the following space
$$
W_{\omega,\omega_{1}}^{1}E^{\Phi,\Psi}\left(\Omega\right):=\left\{u \in E_{\omega_{1}}^{\Psi}\left(\Omega\right),\;\; \left|\nabla u\right|  \in E_{\omega}^{\Phi}\left(\Omega\right)\right\}.
$$

$W_{\omega,\omega_{1}}^{1}E^{\Phi,\Psi}\left(\Omega\right)$ is a closed and separable subspace of $\W.$

Now, we define the space $W_{0,\omega,\omega_{1}}^{1}L^{\Phi,\Psi}\left(\Omega\right)$ as the weak* closure of $C_{c}^{\infty}\left(\Omega \right).$

\begin{Proposition}\label{embe}
	The embedding $W_{\omega,\omega_{1}}^{1}L^{\Phi,\Psi}\left(\R^{N}\right)\hookrightarrow L^{\Phi_{N}}(\mathbb{R}^{N})$ is continuous for all $\Psi\prec\prec\Phi_{N}$, where $\Phi_{N}$ is given by \eqref{HN}. In particular, the embedding
	$W_{\omega,\omega_{1}}^{1}L^{\Phi,\Psi}\left(\R^{N}\right)\hookrightarrow L_{\omega}^{\Phi}(\mathbb{R}^{N})$
	and
	$W_{\omega,\omega_{1}}^{1}L^{\Phi,\Psi}(\mathbb{R}^{N})\hookrightarrow L_{\omega_{1}}^{\Psi}(\mathbb{R}^{N})$ are continuous. Moreover, under (\ref{emdh1}) and (\ref{embh2}) the embeddings $W_{\omega,\omega_{1}}^{1}L^{\Phi,\Psi}\left(\R^{N}\right)\hookrightarrow E_{\omega}^{\Phi}(\mathbb{R}^{N})$ and $W_{\omega,\omega_{1}}^{1}L^{\Phi,\Psi}\left(\R^{N}\right)\hookrightarrow E_{\omega_{1}}^{\Psi}(\mathbb{R}^{N})$ are compact. 
\end{Proposition}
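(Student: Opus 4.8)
The plan is to reduce every claim to the unweighted Orlicz--Sobolev setting, where Theorem~\ref{Cianchiembedding} is available, and then to reinstate the weights using $\omega,\omega_{1}\ge 1$ and their integrability. First I would record the elementary comparison of Luxemburg norms: if $\omega\ge 1$ then $\int_{\R^{N}}\Phi(|v|/\xi)\,dx\le\int_{\R^{N}}\omega(x)\Phi(|v|/\xi)\,dx$ for every $\xi>0$, so the set defining the unweighted norm contains the one defining the weighted norm, whence $\|v\|_{L^{\Phi}}\le\|v\|_{L^{\Phi}_{\omega}}$; symmetrically $\|v\|_{L^{\Psi}}\le\|v\|_{L^{\Psi}_{\omega_{1}}}$. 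In particular, for $u\in W_{\omega,\omega_{1}}^{1}L^{\Phi,\Psi}(\R^{N})$ this gives $\|\nabla u\|_{L^{\Phi}}\le\|\nabla u\|_{L^{\Phi}_{\omega}}\le\|u\|_{W_{\omega,\omega_{1}}^{1}L^{\Phi,\Psi}(\R^{N})}$, so the gradient lands in the ordinary space $L^{\Phi}(\R^{N})$.

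For the principal embedding into $L^{\Phi_{N}}(\R^{N})$ I would invoke the Orlicz--Sobolev inequality of Theorem~\ref{Cianchiembedding} in its homogeneous (gradient) form $\|u\|_{L^{\Phi_{N}}}\le C\|\nabla u\|_{L^{\Phi}}$ and chain it with the previous bound to obtain $\|u\|_{L^{\Phi_{N}}}\le C\|u\|_{W_{\omega,\omega_{1}}^{1}L^{\Phi,\Psi}(\R^{N})}$; the hypothesis $\Psi\prec\prec\Phi_{N}$ (implied by $(\Psi_{2})$ together with the remark $\Phi\prec\prec\Phi_{N}$) makes the target compatible with the definition of the domain space. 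The continuous embedding into $L^{\Psi}_{\omega_{1}}(\R^{N})$ is then immediate, being exactly the first summand of the norm \eqref{norm2}, namely $\|u\|_{L^{\Psi}_{\omega_{1}}}\le\|u\|_{W_{\omega,\omega_{1}}^{1}L^{\Phi,\Psi}(\R^{N})}$.

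The continuous embedding into the weighted target $L^{\Phi}_{\omega}(\R^{N})$ is the genuinely delicate point, and this is where I would exploit the integrability $\omega\in L^{\tilde{\Psi}}$. Starting from $u\in L^{\Phi_{N}}$ obtained above, I would estimate the modular $\int_{\R^{N}}\omega(x)\Phi(|u|/\xi)\,dx$ by the Hölder inequality in Orlicz spaces applied to the pair consisting of $\Phi(|u|/\xi)$ and $\omega$, bounding it by a multiple of $\|\Phi(|u|/\xi)\|_{L^{\Theta}}\,\|\omega\|_{L^{\tilde{\Theta}}}$ for a suitably chosen Young function $\Theta$, and then controlling the first factor by the membership $u\in L^{\Phi_{N}}$. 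The correct choice of $\Theta$ is dictated by the three-way comparison between $\Psi$, $\Phi$ and $\Phi_{N}$: one must arrange simultaneously that $\omega\in L^{\tilde{\Theta}}$ (forced by $\omega\in L^{\tilde{\Psi}}$) and that $\Phi(|u|/\xi)\in L^{\Theta}$ (forced by $u\in L^{\Phi_{N}}$), which is exactly the role of the growth conditions $(\Psi_{2})$ and $\Phi\prec\prec\Phi_{N}$. Choosing $\xi$ of the order of $\|u\|_{W_{\omega,\omega_{1}}^{1}L^{\Phi,\Psi}(\R^{N})}$ makes the modular at most $1$ and yields $\|u\|_{L^{\Phi}_{\omega}}\le C\|u\|_{W_{\omega,\omega_{1}}^{1}L^{\Phi,\Psi}(\R^{N})}$; the dual hypothesis $\omega_{1}\in L^{\tilde{\Phi}}$ plays the analogous part whenever the symmetric estimate is called for.

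Finally, for the compactness under \eqref{emdh1}--\eqref{embh2} I would take a sequence $(u_{n})$ bounded in $W_{\omega,\omega_{1}}^{1}L^{\Phi,\Psi}(\R^{N})$, apply the compact part of Theorem~\ref{Cianchiembedding} (legitimate because $\Phi\prec\prec\Phi_{N}$ and, by $(\Psi_{2})$, $\Psi\prec\prec\Phi_{N}$) to extract a subsequence converging in the unweighted $L^{\Phi}$ and $L^{\Psi}$, and then upgrade this to convergence in the weighted $E$-spaces $E^{\Phi}_{\omega}(\R^{N})$ and $E^{\Psi}_{\omega_{1}}(\R^{N})$ by a uniform-integrability argument in which the weights, belonging to the complementary Orlicz classes, control $\int\omega\,\Phi(|u_{n}-u|)$ through the same Hölder device; passing through the $E$-spaces is precisely what permits me to avoid the $\Delta_{2}$-condition, since there modular and norm convergence are reconciled. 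The step I expect to be the main obstacle is exactly this transfer from the unweighted Cianchi statements to the weighted ones: it rests on the generalized Hölder inequality coupled with the weight-integrability hypotheses $\omega\in L^{\tilde{\Psi}}$ and $\omega_{1}\in L^{\tilde{\Phi}}$, and it must be executed on the unbounded domain $\R^{N}$ and without $\Delta_{2}$, a regime in which the usual identifications of Orlicz classes with their norm closures are unavailable.
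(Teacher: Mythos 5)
The paper's own ``proof'' of this proposition is a single sentence deferring to Theorem~\ref{Cianchiembedding}, so there is no detailed argument to match yours against; judged on its own terms, the easy parts of your plan are fine. The norm comparison $\|v\|_{L^{\Phi}}\le\|v\|_{L^{\Phi}_{\omega}}$ from $\omega\ge1$, the chain $\|u\|_{L^{\Phi_{N}}}\le C\|\nabla u\|_{L^{\Phi}}\le C\|\nabla u\|_{L^{\Phi}_{\omega}}$, and the observation that the embedding into $L^{\Psi}_{\omega_{1}}$ is just the first summand of the norm \eqref{norm2} are all correct and are presumably what the authors have in mind.

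The step you yourself single out as ``the main obstacle'' is, however, a genuine gap: it is announced but never executed, and as sketched it does not go through. For the embedding into $L^{\Phi}_{\omega}$, H\"older against the hypothesis $\omega\in L^{\tilde{\Psi}}$ forces the choice $\Theta=\Psi$ (it is not a free parameter), so you must show $\Phi(|u|/\xi)\in L^{\Psi}$, which amounts to a composition bound of the form $\Psi\bigl(\Phi(t)/\eta\bigr)\lesssim\Phi_{N}(t/\xi')$ for large $t$. This does not follow from $\Psi\prec\prec\Phi$ together with $\Phi\prec\prec\Phi_{N}$: already for $\Phi(t)=t^{p}$, $\Psi(t)=t^{q}$ with $1<q<p<N$, it reads $pq\le p^{*}=Np/(N-p)$, i.e. $q\le N/(N-p)$, which fails whenever $N$ is large relative to $p$ even though all stated hypotheses hold. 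The same unproved device is what your compactness ``upgrade'' rests on: since $\omega\ge1$, the weighted modular dominates the unweighted one, so convergence of the subsequence in the unweighted $L^{\Phi}$ produced by Cianchi's theorem gives no control of $\int_{\Omega}\omega\,\Phi(|u_{n}-u|/\xi)\,dx$ without precisely this H\"older step. So the transfer from the unweighted Cianchi statements to the weighted targets --- the only nontrivial content of the proposition --- is missing, and you would need either an additional structural hypothesis linking $\Psi\circ\Phi$ to $\Phi_{N}$ or an entirely different mechanism (e.g.\ a genuinely weighted Poincar\'e/Sobolev inequality) to close it.
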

\begin{proof}
	The proof is similar to the proof of Theorem \ref{Cianchiembedding}.
\end{proof}
%
%
\begin{Proposition}[Poincar\'{e} inequality]\label{poincare}
	Let $u\in W_{0,\omega,\omega_{1}}^{1}L^{\Phi,\Psi}\left(\R^{N}\right)$. If $\Psi\prec\prec\Phi$, then there exist a constant $C>0$ such that
	$$
	\left\|u\right\|_{L^{\Psi}_{\omega_{1}}(\Omega)}\leq C\left\|\nabla u \right\|_{L^{\Phi}_{\omega}(\Omega)}.
	$$
\end{Proposition}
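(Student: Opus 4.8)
The plan is to argue by contradiction and to exploit the compact embedding supplied by Proposition \ref{embe}. Suppose the asserted inequality fails for every constant $C$. Then, normalising a sequence that violates it with the constants $C=n$, one obtains $\{u_n\}\subset\WW$ with $\left\|u_n\right\|_{L^{\Psi}_{\omega_1}(\Omega)}=1$ and $\left\|\nabla u_n\right\|_{L^{\Phi}_{\omega}(\Omega)}\to 0$. In particular $\{u_n\}$ is bounded in $\WW$, since by \eqref{norm2} its norm is at most $1+\left\|\nabla u_n\right\|_{L^{\Phi}_{\omega}(\Omega)}$, which stays bounded.

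The next step is to invoke the compact embedding $\WW\hookrightarrow E^{\Psi}_{\omega_1}(\Omega)$ of Proposition \ref{embe}: boundedness of $\{u_n\}$ yields a subsequence (not relabelled) converging to a limit $u$ in the norm of $L^{\Psi}_{\omega_1}(\Omega)$. Passing to the limit in $\left\|u_n\right\|_{L^{\Psi}_{\omega_1}(\Omega)}=1$ gives $\left\|u\right\|_{L^{\Psi}_{\omega_1}(\Omega)}=1$, so $u\neq 0$. The whole argument then reduces to showing that, nevertheless, $u\equiv 0$, which is the sought contradiction.

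To this end the key point is that $u$ has vanishing distributional gradient. Fix $\rho\in C_c^{\infty}(\Omega)$ and $i\in\{1,\dots,N\}$; rewriting $\int_{\Omega}\partial_i u_n\,\rho\,dx=\int_{\Omega}\omega\,(\partial_i u_n)(\rho/\omega)\,dx$ and applying the weighted Hölder inequality \eqref{weithedh} gives $\left|\int_{\Omega}\partial_i u_n\,\rho\,dx\right|\le 2\left\|\partial_i u_n\right\|_{L^{\Phi}_{\omega}(\Omega)}\left\|\rho/\omega\right\|_{L^{\tilde\Phi}_{\omega}(\Omega)}$. Since $\omega\ge 1$ and $\rho$ is bounded with compact support, and $\omega\in L^{\tilde\Psi}(\Omega)\subset L^1_{loc}(\Omega)$, the factor $\left\|\rho/\omega\right\|_{L^{\tilde\Phi}_{\omega}(\Omega)}$ is finite while $\left\|\partial_i u_n\right\|_{L^{\Phi}_{\omega}(\Omega)}\to 0$, whence $\int_{\Omega}\partial_i u_n\,\rho\,dx\to 0$. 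On the other hand $\omega_1\ge 1$ forces $u_n\to u$ in $L^{\Psi}(\Omega)\subset L^1(\Omega)$, so integrating by parts $\int_{\Omega}u\,\partial_i\rho\,dx=\lim_n\int_{\Omega}u_n\,\partial_i\rho\,dx=-\lim_n\int_{\Omega}\partial_i u_n\,\rho\,dx=0$. Thus $\nabla u=0$ distributionally, so $u$ is constant on each connected component of $\Omega$; and since $u$ lies in the weak\textsuperscript{*} closure of $C_c^{\infty}(\Omega)$, its extension by zero to $\R^N$ has zero gradient and bounded support, which forces the constant to be $0$. Hence $u=0$, contradicting $\left\|u\right\|_{L^{\Psi}_{\omega_1}(\Omega)}=1$.

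The main obstacle is the weight mismatch between the two sides of the inequality: the test-function pairing must be recast against the weight $\omega$ so that \eqref{weithedh} applies and the integrability $\omega\in L^{\tilde\Psi}(\Omega)$ can be used, and one must keep in mind that the compactness of Proposition \ref{embe} is available only when the growth conditions \eqref{emdh1}--\eqref{embh2} are in force. A more constructive alternative would bound $|u|$ pointwise by the line integral of $|\nabla u|$ across a slab containing $\Omega$ and apply Jensen's inequality to the modular $\int_{\Omega}\omega_1\,\Psi(|u|/\xi)\,dx$, but the differing weights $\omega_1$ and $\omega$ and Young functions $\Psi$ and $\Phi$ make the resulting constant awkward to control, so the compactness route is preferable.
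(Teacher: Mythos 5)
Your argument is correct in outline, but it follows a genuinely different route from the paper. The paper's proof is a two-line composition of continuous embeddings: from $\Psi\prec\prec\Phi$ it asserts $\left\|u\right\|_{L^{\Psi}_{\omega_1}(\Omega)}\leq C\left\|u\right\|_{L^{\Phi}_{\omega}(\Omega)}$, and then invokes Gossez's Lemma 5.7 to get $\left\|u\right\|_{L^{\Phi}_{\omega}(\Omega)}\leq C_1\left\|\nabla u\right\|_{L^{\Phi}_{\omega}(\Omega)}$; no compactness, no contradiction, and a constant that is the product of the two embedding constants. Your Rellich-type contradiction argument is heavier but has the virtue of not needing the intermediate inequality between the differently weighted spaces $L^{\Phi}_{\omega}$ and $L^{\Psi}_{\omega_1}$ (which the paper asserts without proof), nor the transfer of Gossez's unweighted lemma to the weighted setting; the price is that you import the compact embedding of Proposition \ref{embe}, which holds only under the growth conditions \eqref{emdh1}--\eqref{embh2} — hypotheses not present in the statement of Proposition \ref{poincare}, though the paper does impose them globally. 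You flag this yourself, which is good.

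Two steps should be tightened before the proof is complete. First, the strong $L^{\Psi}_{\omega_1}$-limit $u$ must be shown to lie in $\WW$ before you may speak of its membership in the weak\textsuperscript{*} closure of $C_c^{\infty}(\Omega)$: this requires an extra extraction, using that $\{u_n\}$ is bounded in $\WW$, which is weak\textsuperscript{*} closed in the dual of a separable space, to get $u_n\rightharpoonup^{*}v$ in $\WW$ and then identifying $v=u$ via the common distributional limit. Second, the conclusion that a $\WW$-function with vanishing distributional gradient is zero is asserted rather than proved; it needs the fact that the zero-extension of such a $u$ to $\R^{N}$ has vanishing gradient in $\mathcal{D}'(\R^{N})$ (which passes from the approximating $C_c^{\infty}$ functions through the weak\textsuperscript{*} limit), together with connectedness of $\Omega$. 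Neither point is fatal — both are standard — but as written they are gaps, and the second is precisely where a careless argument would circularly reinvoke the Poincar\'e inequality.
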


\begin{proof}
	Since $\Psi\prec\prec\Phi$, then there exist $C>0$ such that
	$$
	\left\|u\right\|_{L_{\omega_{1}}^{\Psi}(\Omega)}\leq C\left\| u\right\|_{L_{\omega}^{\Phi}(\Omega)}.
	$$
	From \cite[Lemma 5.7]{gossez1974nonlinear}, there exist $C_{1}>0$ such that
	$$
	\left\|u\right\|_{L_{\omega_{1}}^{\Psi}(\Omega)}\leq C\left\| u\right\|_{L_{\omega}^{\Phi}(\Omega)}\leq C_{1}\left\| \nabla u\right\|_{L_{\omega}^{\Phi}(\Omega)}.
	$$
	The proof is complete.
\end{proof}
As a usual application of the Proposition \ref{poincare}, the norm $\left\|\nabla . \right\|_{L^{\Phi}_{\omega}(\Omega)}$ is equivalant to the norm $\left\| .\right\|_{W_{\omega,\omega_{1}}^{1}L^{\Phi,\Psi}\left(\Omega\right)}$ in $W_{0,\omega,\omega_{1}}^{1}L^{\Phi,\Psi}\left(\Omega\right)$.\\

Now, we give the following technical Lemmas.

\begin{Lemma}\label{algebric1}
	Given $v\in W_{0,\omega,\omega_{1}}^{1}L^{\Phi,\Psi}\left(\Omega\right)$, then $\delta v\in K^{\Psi}_{\omega_{1}}(\Omega)$, for all $\delta>0$. Moreover there exists $T>0$ such that 
	\begin{equation}\label{*}
		\int_{\Omega}\omega_{1}(x)\Psi(\delta v(x))dx
		\leq2\Psi(T)\left\|\omega_{1}\right\|_{L_{\omega_{1}}^{\tilde{\Psi}}\left(\Omega_{k}\right)}\left\|1\right\|_{L_{\omega_{1}}^{\Psi}\left(\Omega_{k}\right)}
		+1.	
	\end{equation}
\end{Lemma}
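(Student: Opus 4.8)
The plan is to split the statement into its qualitative part, $\delta v\in K^{\Psi}_{\omega_{1}}(\Omega)$, and the quantitative bound \eqref{*}, and to dispose of the membership first, since it is the only delicate point.

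For the membership I would \emph{not} argue from $v\in L^{\Psi}_{\omega_{1}}(\Omega)$ directly: without the $\Delta_{2}$-condition, finiteness of the Luxemburg norm of $v$ does not force $\int_{\Omega}\omega_{1}\Psi(\delta|v|)\,dx<\infty$ for large $\delta$. Instead I would use Proposition \ref{embe}: since $\Psi\prec\prec\Phi$ and $\Phi\prec\prec\Phi_{N}$, one has $\Psi\prec\prec\Phi_{N}$, so the embedding places $v$ in $E^{\Psi}_{\omega_{1}}(\Omega)$, the closure of the bounded measurable functions. As $E^{\Psi}_{\omega_{1}}(\Omega)$ is a linear subspace of $K^{\Psi}_{\omega_{1}}(\Omega)$, it is invariant under the dilation $v\mapsto\delta v$; hence $\delta v\in E^{\Psi}_{\omega_{1}}(\Omega)\subseteq K^{\Psi}_{\omega_{1}}(\Omega)$, i.e. $\int_{\Omega}\omega_{1}\Psi(\delta|v|)\,dx<\infty$ for every $\delta>0$. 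I expect this to be the main obstacle, since the entire estimate rests on this finiteness.

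Granting finiteness, \eqref{*} follows from a truncation plus weighted Hölder argument. Put $\Omega_{k}:=\{x\in\Omega:\ |v(x)|\le k\}$; since $v$ is finite a.e.\ and $|\Omega|<\infty$, one has $|\Omega\setminus\Omega_{k}|\to0$, and by dominated convergence applied to the (now integrable) function $\omega_{1}\Psi(\delta v)$ I may fix $k$ so large that $\int_{\Omega\setminus\Omega_{k}}\omega_{1}\Psi(\delta v)\,dx\le 1$. On $\Omega_{k}$ we have $\delta|v|\le\delta k=:T$, so monotonicity of $\Psi$ gives $\int_{\Omega_{k}}\omega_{1}\Psi(\delta v)\,dx\le\Psi(T)\int_{\Omega_{k}}\omega_{1}\,dx$.

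To control $\int_{\Omega_{k}}\omega_{1}\,dx$ I would write $\omega_{1}=\omega_{1}\cdot\frac{1}{\omega_{1}}\cdot\omega_{1}$ and apply the weighted Hölder inequality \eqref{weithedh} to the pair $\frac{1}{\omega_{1}}\in L^{\Psi}_{\omega_{1}}(\Omega_{k})$ and $\omega_{1}\in L^{\tilde{\Psi}}_{\omega_{1}}(\Omega_{k})$; using $\omega_{1}\ge1$, hence $\|\frac{1}{\omega_{1}}\|_{L^{\Psi}_{\omega_{1}}(\Omega_{k})}\le\|1\|_{L^{\Psi}_{\omega_{1}}(\Omega_{k})}$ by monotonicity of the Luxemburg norm, this yields $\int_{\Omega_{k}}\omega_{1}\,dx\le 2\|\omega_{1}\|_{L^{\tilde{\Psi}}_{\omega_{1}}(\Omega_{k})}\|1\|_{L^{\Psi}_{\omega_{1}}(\Omega_{k})}$. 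Combining with the tail estimate gives precisely \eqref{*}. The finiteness of the two norms on the bounded set $\Omega_{k}$ (in particular $\|\omega_{1}\|_{L^{\tilde{\Psi}}_{\omega_{1}}(\Omega_{k})}<\infty$) follows from the standing integrability hypotheses on the weight $\omega_{1}$, exactly as in the proof of the Banach property of $\W$; these are routine points once the membership has been secured.
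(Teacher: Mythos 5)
Your argument is correct, but it takes a noticeably different route through the one genuinely delicate step, namely why the contribution of the set where $\delta|v|$ is large can be made $\leq 1$. The paper never establishes $\delta v\in K^{\Psi}_{\omega_{1}}(\Omega)$ as a separate preliminary: it uses the embedding of $v$ into the Sobolev-conjugate space $L^{\Psi_{*}}_{\omega_{1}}(\Omega)$ together with $\Psi\prec\prec\Psi_{*}$ to pick constants $K,T$ with $\|v\|_{L^{\Psi_{*}}_{\omega_{1}}(\Omega)}\leq K$ and $\Psi(t)\leq\Psi_{*}\bigl(t/(\delta K)\bigr)$ for $t>T$, so that on $\Omega\setminus\Omega_{k}$ the integrand is dominated pointwise by $\omega_{1}\Psi_{*}(v/K)$, whose integral is at most $1$ by the very definition of the Luxemburg norm; membership and the bound then come out in a single pass, with an explicit $T$. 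You instead secure membership first, via the embedding into $E^{\Psi}_{\omega_{1}}(\Omega)$ and the fact that this is a dilation-invariant linear subspace of $K^{\Psi}_{\omega_{1}}(\Omega)$, and only then extract the tail bound from absolute continuity of the (now finite) integral, at the price of a non-constructive choice of the truncation level $k$. Both routes ultimately rest on the same Orlicz--Sobolev embedding (Proposition \ref{embe} / Theorem \ref{Cianchiembedding}) and both tacitly need \eqref{emdh1}--\eqref{embh2}, which the lemma does not state but the paper assumes throughout. Your handling of the near part, $\int_{\Omega_{k}}\omega_{1}\,dx\leq 2\left\|\omega_{1}\right\|_{L^{\tilde{\Psi}}_{\omega_{1}}(\Omega_{k})}\left\|1\right\|_{L^{\Psi}_{\omega_{1}}(\Omega_{k})}$ via the weighted H\"{o}lder inequality and $\omega_{1}\geq 1$, coincides with the paper's and is in fact justified more carefully than in the original.
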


\begin{proof}
	Since $v\in W_{0,\omega,\omega_{1}}^{1}L^{\Phi,\Psi}\left(\Omega\right)$, then $v\in L_{\omega_{1}}^{\Psi}\left(\Omega\right)$ and by Proposition (embedding), we get $v\in L_{\omega_{1}}^{\Psi_{*}}\left(\Omega\right)$. In addition, $\Psi$ grows essentially more slowly than
	$\Psi_{*}$ as we proved in Proposition (grow). Therefore, we can choose positive constants $K$ and $T$ such that $\left\|u \right\|_{L_{\omega_{1}}^{\Psi_{*}}\left(\Omega\right)}\leq K$ and $\Psi(t)\leq\Psi_{*}\left(\frac{t}{\delta K} \right)$ for $t>T$. Now, we consider the following subset
	$$
	\Omega_{k}:=\left\lbrace x;\ v(x)\leq \frac{T}{\delta} \right\rbrace.
	$$
	Then,
	$$
	\begin{aligned}
		\int_{\Omega}\omega_{1}(x)\Psi(\delta v(x))dx&\leq\int_{\Omega_{k}}\omega_{1}(x)\Psi(\delta v(x))dx+\int_{\Omega/\Omega_{k}}\omega_{1}(x)\Psi_*\left(\frac{v(x)}{K}\right) dx\\
		&\leq2\Psi(T)\left\|\omega_{1}\right\|_{L_{\omega_{1}}^{\tilde{\Psi}}\left(\Omega_{k}\right)}\left\|1\right\|_{L_{\omega_{1}}^{\Psi}\left(\Omega_{k}\right)}
		+\int_{\Omega}\omega_{1}(x)\Psi_*\left(\frac{v(x)}{K}\right)dx\\
		&\leq2\Psi(T)\left\|\omega_{1}\right\|_{L_{\omega_{1}}^{\tilde{\Psi}}\left(\Omega_{k}\right)}\left\|1\right\|_{L_{\omega_{1}}^{\Psi}\left(\Omega_{k}\right)}
		+1.
	\end{aligned}
	$$
	Hence, $\delta v\in K^{\Psi}_{\omega_{1}}(\Omega)$.
	
\end{proof}
\begin{Remark}
	The proof for $u\in W_{0,\omega,\omega_{1}}^{1}E^{\Phi,\Psi}\left(\Omega\right)$ run analogously.
\end{Remark}	
\begin{Lemma}
	Given $v\in W_{0,\omega,\omega_{1}}^{1}L^{\Phi,\Psi}\left(\Omega\right)$, then $\psi(\left|v\right|)\in K^{\widetilde{\Psi}}_{\omega_{1}}(\Omega)$.
\end{Lemma}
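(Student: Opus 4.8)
The plan is to reduce the claim to the modular integrability statement already established in Lemma \ref{algebric1}, using the pointwise comparison between a Young function and its complementary function supplied by Lemma \ref{important3}. By definition, the assertion $\psi(|v|)\in K^{\widetilde{\Psi}}_{\omega_{1}}(\Omega)$ is nothing but the finiteness of the modular
$$
\int_{\Omega}\omega_{1}(x)\widetilde{\Psi}\bigl(\psi(|v(x)|)\bigr)\,dx,
$$
so the entire proof amounts to bounding this integral.

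First I would apply Lemma \ref{important3} to the Young function $\Psi$, which satisfies $(\Psi_{1})$ with $\psi=\Psi'$ and complementary function $\widetilde{\Psi}$. This yields the pointwise inequality $\widetilde{\Psi}(\psi(t))\leq m_{1}\Psi(t)$ for every $t\geq 0$. Substituting $t=|v(x)|$, multiplying by the nonnegative weight $\omega_{1}(x)$, and integrating over $\Omega$ gives
$$
\int_{\Omega}\omega_{1}(x)\widetilde{\Psi}\bigl(\psi(|v(x)|)\bigr)\,dx\leq m_{1}\int_{\Omega}\omega_{1}(x)\Psi(|v(x)|)\,dx.
$$
Second, I would control the right-hand side by taking $\delta=1$ in Lemma \ref{algebric1}, which shows $v\in K^{\Psi}_{\omega_{1}}(\Omega)$, i.e. the modular $\int_{\Omega}\omega_{1}(x)\Psi(|v(x)|)\,dx$ is finite (indeed majorized by $2\Psi(T)\|\omega_{1}\|_{L_{\omega_{1}}^{\widetilde{\Psi}}(\Omega_{k})}\|1\|_{L_{\omega_{1}}^{\Psi}(\Omega_{k})}+1$). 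Chaining the two displays produces the desired finiteness and hence $\psi(|v|)\in K^{\widetilde{\Psi}}_{\omega_{1}}(\Omega)$.

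Both steps are routine once Lemmas \ref{important3} and \ref{algebric1} are invoked; the only point that genuinely requires care is that one must control the \emph{modular} $\int_{\Omega}\omega_{1}\Psi(|v|)\,dx$ rather than merely the membership $v\in L^{\Psi}_{\omega_{1}}(\Omega)$, and it is precisely for this reason that the preceding Lemma \ref{algebric1} — which exploits the compact embedding into $L^{\Psi_{*}}_{\omega_{1}}(\Omega)$ together with the fact that $\Psi$ grows essentially more slowly than $\Psi_{*}$ — is needed in place of the bare definition of the space. Note in particular that no $\Delta_{2}$-condition on $\Psi$ enters anywhere, which is consistent with the working hypotheses.
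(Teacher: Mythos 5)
Your reduction is structurally the same as the paper's --- both arguments bound the modular $\int_\Omega \omega_1 \widetilde{\Psi}(\psi(|v|))\,dx$ by a $\Psi$-modular of $v$ and then invoke Lemma \ref{algebric1} --- but the pointwise inequality you chose imports a hypothesis the paper deliberately avoids, and your closing claim that ``no $\Delta_2$-condition on $\Psi$ enters anywhere'' is not accurate. Lemma \ref{important3} is stated under $(\Phi_1)$, and the upper bound $t\varphi(t)/\Phi(t)\le m$ in that condition is, as the paper itself recalls from Krasnosel'skii, \emph{equivalent} to the $\Delta_2$-condition. Applying that lemma to $\Psi$ therefore means assuming $(\Psi_1)$, i.e.\ in particular $\Delta_2$ for $\Psi$. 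This lemma, however, sits in the general Section 3 and must serve Section 6, where Theorems \ref{T1}--\ref{T3} assume only $(\Psi_2)$ and the entire point is to dispense with $\Delta_2$; under those hypotheses your appeal to Lemma \ref{important3} is simply unavailable, so the step fails in the generality required.

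The paper's proof instead uses the representation \eqref{2.2} of $\widetilde{\Psi}$ together with the elementary chain $\widetilde{\Psi}(\psi(t))\le t\psi(t)\le \Psi(2t)$ (inequality \eqref{messt2}), valid for \emph{every} Young function with no growth restriction, and then absorbs the factor $2$ by applying Lemma \ref{algebric1} with $\delta=2$ to conclude $2|v|\in K^{\Psi}_{\omega_1}(\Omega)$. If you replace your first display by this unconditional estimate, the remainder of your argument (with the call to Lemma \ref{algebric1} made at $\delta=2$ rather than $\delta=1$) goes through verbatim and in the needed generality. Under the additional hypothesis $(\Psi_1)$ your version is correct as written, but it then proves a strictly weaker statement than the one the paper relies on.
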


\begin{proof}
	Using the fact that $\psi$ is increasing, (\ref{2.2}) and (\ref{messt}) we have
	\begin{equation}\label{algebric2}
		\begin{aligned}
			\int_{\Omega}\omega_{1}(x)\widetilde{\Psi}\left(\psi(\left|v\right|)\right)dx&=\int_{\Omega}\omega_{1}\left(\int_{0}^{\psi\left( \left|v\right|\right)}\psi^{-1}(s)ds\right)dx\\
			&\leq\int_{\Omega}\omega_{1}(x)\left|v\right|\psi\left( \left|v\right|\right)dx\\
			&\leq\int_{\Omega}\omega_{1}(x)\Psi\left(2\left|v\right|\right)dx.
		\end{aligned}
	\end{equation}
	By Lemma \ref{algebric1}, we know that $2\left|v\right|\in K^{\Psi}_{\omega_{1}}(\Omega)$, this together with the estimation (\ref{algebric2}) the proof is achieved.
\end{proof}

\begin{Lemma}\label{algebric3}
	\begin{itemize}Let $0<\epsilon<1$.
		\item[(a)]\ \ If $\left(1-\epsilon\right)u\in K^{\Psi}_{\omega_{1}}(\Omega)$, then $\psi\left(\left(1-\epsilon\right)u\right)\in L^{\widetilde{\Psi}}_{\omega_{1}}(\Omega)$.
		\item[(b)]\ \ If $\left(1-\epsilon\right)u\in K^{\Psi}_{\omega_{1}}(\Omega)$ and $v\in K^{\Psi}_{\omega_{1}}(\Omega)$, then $\psi\left(\left(1-\epsilon\right)u+v\right)\in L^{\widetilde{\Psi}}_{\omega_{1}}(\Omega)$.
	\end{itemize}
\end{Lemma}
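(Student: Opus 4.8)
The plan is to prove both items by establishing the stronger conclusion that the functions in question already belong to the Orlicz class $K^{\tilde\Psi}_{\omega_{1}}(\Omega)$, which sits inside $L^{\tilde\Psi}_{\omega_{1}}(\Omega)$. The single engine of the argument is the pointwise estimate
\[
\widetilde{\Psi}(\psi(t))\le t\,\psi(t)\le m_{1}\,\Psi(t),\qquad t\ge 0,
\]
whose first inequality is the $\Psi$-version of \eqref{messt2} and whose second inequality is exactly the upper bound in $(\Psi_{1})$. This is the one place where the structural hypothesis on $\Psi$ is genuinely used, and it is also the main obstacle: if one only has the chain \eqref{messt}, then \eqref{messt2} yields nothing better than $\widetilde{\Psi}(\psi(t))\le\Psi(2t)$, and the uncontrolled dilation by $2$ cannot be removed for a non-$\Delta_2$ function (for $\Psi(t)=e^{t}-t-1$ one can build $u$ with $(1-\epsilon)u\in K^{\Psi}$ yet $\psi((1-\epsilon)u)\notin L^{\tilde\Psi}$, so the conclusion really does need the finite ratio $m_{1}$). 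Throughout, since $\psi$ is defined on $[0,\infty)$, I read $\psi((1-\epsilon)u)$ as $\psi((1-\epsilon)|u|)$, etc.

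For part (a) I would simply substitute $t=(1-\epsilon)|u|$ into the estimate above and integrate against the weight; as $\psi$ and $\widetilde{\Psi}$ are non-negative this gives
\[
\int_{\Omega}\omega_{1}(x)\,\widetilde{\Psi}\!\left(\psi\big((1-\epsilon)|u|\big)\right)dx\le m_{1}\int_{\Omega}\omega_{1}(x)\,\Psi\big((1-\epsilon)|u|\big)\,dx,
\]
and the right-hand side is finite precisely because the hypothesis $(1-\epsilon)u\in K^{\Psi}_{\omega_{1}}(\Omega)$ means $\int_{\Omega}\omega_{1}\Psi((1-\epsilon)|u|)\,dx<\infty$. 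Hence $\psi((1-\epsilon)u)\in K^{\tilde\Psi}_{\omega_{1}}(\Omega)\subset L^{\tilde\Psi}_{\omega_{1}}(\Omega)$. Note that $\epsilon$ plays no distinguished role here beyond naming the admissible scaling factor: the argument treats $(1-\epsilon)u$ as a single element of the Orlicz class.

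For part (b) I would first reduce the signed argument to a non-negative one: by the triangle inequality $|(1-\epsilon)u+v|\le(1-\epsilon)|u|+|v|$, and since both $\psi$ and $\widetilde{\Psi}$ are non-decreasing,
\[
\widetilde{\Psi}\!\left(\psi\big(|(1-\epsilon)u+v|\big)\right)\le\widetilde{\Psi}\!\left(\psi\big((1-\epsilon)|u|+|v|\big)\right)\le m_{1}\,\Psi\big((1-\epsilon)|u|+|v|\big),
\]
using the master estimate once more with $t=(1-\epsilon)|u|+|v|$. It then remains to split the sum inside $\Psi$, for which I would apply the $\Psi$-analogue of \eqref{L2}, namely $\Psi(a+b)\le\mathbf{C}(m_{1})\big(\Psi(a)+\Psi(b)\big)$, with $a=(1-\epsilon)|u|$ and $b=|v|$. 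Integrating against $\omega_{1}$ produces
\[
\int_{\Omega}\omega_{1}\,\widetilde{\Psi}\!\left(\psi\big(|(1-\epsilon)u+v|\big)\right)dx\le m_{1}\mathbf{C}(m_{1})\left(\int_{\Omega}\omega_{1}\Psi\big((1-\epsilon)|u|\big)dx+\int_{\Omega}\omega_{1}\Psi(|v|)\,dx\right),
\]
and both integrals on the right are finite by the two hypotheses $(1-\epsilon)u\in K^{\Psi}_{\omega_{1}}(\Omega)$ and $v\in K^{\Psi}_{\omega_{1}}(\Omega)$. This yields $\psi((1-\epsilon)u+v)\in K^{\tilde\Psi}_{\omega_{1}}(\Omega)\subset L^{\tilde\Psi}_{\omega_{1}}(\Omega)$, completing the proof. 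Everything beyond the master estimate is monotonicity, the triangle inequality, and the sub-additivity bound \eqref{L2}; the only subtle point, worth stating explicitly, is that the upper bound in $(\Psi_{1})$ (equivalently the $\Delta_{2}$-condition on $\Psi$) is what both the master estimate and the $\Psi$-version of \eqref{L2} rely upon.
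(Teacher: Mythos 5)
Your argument is internally consistent, but it hinges on the upper bound $t\psi(t)\le m_1\Psi(t)$ from $(\Psi_1)$ — that is, on the $\Delta_2$-condition for $\Psi$ — and $(\Psi_1)$ is not among the hypotheses of this lemma. That is not a cosmetic point: the lemma is a workhorse for the section on eigenvalue problems \emph{without} $\Delta_2$, where Theorems \ref{T1}--\ref{T3} assume only $(\Psi_2)$, so a proof that imports $(\Psi_1)$ does not deliver the statement where it is actually used. Under $(\Psi_1)$ the factor $1-\epsilon$ is decorative, as your own computation shows; the whole reason the statement carries a $(1-\epsilon)$ is to survive without $\Delta_2$.

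The paper's proof avoids $\Delta_2$ entirely by exploiting the gap between $(1-\epsilon)|u|$ and $|u|$: since $\psi$ is non-decreasing,
$$
\Psi(|u|)\ \ge\ \int_{(1-\epsilon)|u|}^{|u|}\psi(s)\,ds\ \ge\ \epsilon|u|\,\psi\big((1-\epsilon)|u|\big)\ \ge\ \frac{\epsilon}{1-\epsilon}\,\widetilde{\Psi}\Big(\psi\big((1-\epsilon)|u|\big)\Big),
$$
the last step being \eqref{messt2} applied at $t=(1-\epsilon)|u|$; integrating against $\omega_1$ bounds $\int_\Omega\omega_1\widetilde{\Psi}(\psi((1-\epsilon)|u|))\,dx$ by $\frac{1-\epsilon}{\epsilon}\int_\Omega\omega_1\Psi(|u|)\,dx$ with no constant depending on $m_1$. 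Note that this uses $u\in K^{\Psi}_{\omega_1}(\Omega)$ (the paper's proof literally opens with that assumption), not merely $(1-\epsilon)u\in K^{\Psi}_{\omega_1}(\Omega)$; your heuristic counterexample with $\Psi(t)=e^t-t-1$ is in fact evidence that the literal hypothesis ``$(1-\epsilon)u\in K^{\Psi}_{\omega_1}$'' is too weak without $\Delta_2$, so the intended reading is $u\in K^{\Psi}_{\omega_1}$. For (b), instead of your subadditivity bound $\Psi(a+b)\le\mathbf{C}(m_1)(\Psi(a)+\Psi(b))$ (again a $\Delta_2$ fact), the paper writes $(1-\epsilon)u+v=\left(1-\tfrac{\epsilon}{2}\right)w$ with $w=\frac{1}{1-\epsilon/2}\left((1-\epsilon)u+v\right)$, shows $w\in K^{\Psi}_{\omega_1}(\Omega)$ by convexity of $\Psi$ together with Lemma \ref{algebric1} (to handle $\frac{2v}{\epsilon}$), and then applies part (a) to $w$ with the smaller parameter $\epsilon/2$. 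You would need to rework both parts along these lines for the lemma to serve the non-$\Delta_2$ theorems.
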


\begin{proof}
	Let $u\in K^{\Psi}_{\omega_{1}}(\Omega)$. We may assume $u,v\geq0$ a.e in $\Omega$. Then, using the monotonie of $\psi$ and inequality (\ref{messt2}), we obtain
	$$
	\omega_{1}\Psi\left(\left|u\right| \right)\geq\omega_{1}\int_{\left(1-\epsilon\right)\left|u\right|}^{\left|u\right|}\psi(s)ds\geq\omega_{1}\epsilon\left|u\right|\psi\left(\left(1-\epsilon\right)\left|u\right|\right)\geq\omega_{1}\frac{\epsilon}{1-\epsilon}\widetilde{\Psi}\left(\psi\left(\left(1-\epsilon\right) \left|u\right|\right) \right).
	$$
	It follows
	\begin{equation}\label{a}
		\int_{\Omega}\omega_{1}\Psi\left(\left|u\right| \right)\geq\frac{\epsilon}{1-\epsilon}\int_{\Omega}\omega_{1}\widetilde{\Psi}\left(\psi\left(\left(1-\epsilon\right) \left|u\right|\right) \right).
	\end{equation}
	Since $u\in K^{\Psi}_{\omega_{1}}(\Omega)$, then by (\ref{a}), we get $\psi\left(\left(1-\epsilon\right) \left|u\right|\right)\in L^{\widetilde{\Psi}}_{\omega_{1}}(\Omega)$ which proves (a). Due to the convexity of $\Psi$, we have for $0<\epsilon<1$
	
	$$
	\int_{\Omega}\omega_{1}\Psi\left(\frac{1}{1-\frac{\epsilon}{2}}\left(\left(1-\epsilon\right)u+ v\right)\right) dx\leq\frac{1-\epsilon}{1-\frac{\epsilon}{2}}\int_{\Omega}\omega_{1}\Psi\left(\left|u\right| \right)dx+\left(1-\frac{1-\epsilon}{1-\frac{\epsilon}{2}}\right)\int_{\Omega}\omega_{1}\Psi\left(\frac{2v}{\epsilon}\right)dx.
	$$
	
	Using Lemma \ref{algebric1}, we get $\frac{2v}{\epsilon}\in K^{\Psi}_{\omega_{1}}(\Omega)$, therefore $\left(1-\epsilon\right)u+v\in K^{\Psi}_{\omega_{1}}(\Omega)$. By (a), we get $\psi\left(\left(1-\epsilon\right)u+v\right)\in L^{\widetilde{\Psi}}_{\omega_{1}}(\Omega)$ which proves (b). The proof is complete.
\end{proof}

\section{Proof of Theorem \ref{T0}}

In this Section we will prove Theorem \ref{T0}. To this end we shall prove that $J$ and $I$ satisfy the conditions of Theorem \ref{bonano}.
\begin{Proposition}\label{cont}
	The functionals $I$ and $J$ are $C^{1}$.
\end{Proposition}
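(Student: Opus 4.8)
The plan is to show that both $I$ and $J$ are continuously (Fr\'echet) differentiable by first computing their G\^ateaux derivatives explicitly and then verifying that these derivatives depend continuously on $u$; since a G\^ateaux differentiable functional whose derivative is continuous into the dual space is automatically of class $C^1$, this suffices. For $I$ I expect the G\^ateaux derivative to be
\[
\langle I'(u),v\rangle=\int_{\Omega}\omega(x)\frac{\varphi(|\nabla u|)}{|\nabla u|}\nabla u\cdot\nabla v\,dx,
\]
and for $J$, $\langle J'(u),v\rangle=\int_{\Omega}\omega_{1}(x)\psi(|u|)\frac{u}{|u|}v\,dx$, which are precisely the bilinear forms appearing in the weak formulation.

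First I would prove G\^ateaux differentiability of $I$. Fix $u,v\in\WW$ and, for $|t|\le1$, apply the mean value theorem together with the monotonicity of $\varphi$ to dominate the difference quotient $t^{-1}\big(\Phi(|\nabla(u+tv)|)-\Phi(|\nabla u|)\big)$ by $\varphi(|\nabla u|+|\nabla v|)\,|\nabla v|$. The crucial point is that the weight-$\omega$ integral of this dominating function is finite: by Lemma \ref{important3} and the growth estimate \eqref{L2} one has $\int_{\Omega}\omega\,\widetilde{\Phi}\big(\varphi(|\nabla u|+|\nabla v|)\big)\,dx\le m\int_{\Omega}\omega\,\Phi(|\nabla u|+|\nabla v|)\,dx<\infty$, so $\varphi(|\nabla u|+|\nabla v|)\in L_{\omega}^{\widetilde{\Phi}}(\Omega)$, while $|\nabla v|\in L_{\omega}^{\Phi}(\Omega)$; the weighted H\"older inequality \eqref{weithedh} then yields integrability of the dominating function. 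Lebesgue's dominated convergence theorem now lets me differentiate under the integral sign and identify $\langle I'(u),v\rangle$ as above.

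Next I would prove that $u\mapsto I'(u)$ is continuous from $\WW$ into its dual. Writing $a(\xi):=\varphi(|\xi|)\xi/|\xi|$ and applying \eqref{weithedh} to $\langle I'(u_n)-I'(u),v\rangle$ gives
\[
\|I'(u_n)-I'(u)\|_{*}\le 2\,\big\|a(\nabla u_n)-a(\nabla u)\big\|_{L_{\omega}^{\widetilde{\Phi}}(\Omega)},
\]
so everything reduces to showing that the Nemytskii operator $a$ is continuous from $L_{\omega}^{\Phi}(\Omega)$ into $L_{\omega}^{\widetilde{\Phi}}(\Omega)$. Here I would use that $(\Phi_1)$ forces both $\Phi$ and $\widetilde{\Phi}$ to satisfy the $\Delta_2$-condition (the bound $t\varphi(t)/\Phi(t)\le m$ gives $\Phi\in\Delta_2$, and $l>1$ gives $\widetilde{\Phi}\in\Delta_2$), so that norm convergence in these spaces is equivalent to modular convergence. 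Given $\nabla u_n\to\nabla u$ in $L_{\omega}^{\Phi}(\Omega)$, I pass to a subsequence converging a.e.\ and dominated in modulus; continuity of $\xi\mapsto a(\xi)$ gives $a(\nabla u_n)\to a(\nabla u)$ a.e., and a generalized dominated convergence argument (again controlling $\widetilde{\Phi}(a(\cdot))$ by $m\Phi(\cdot)$ via Lemma \ref{important3}) shows $\int_{\Omega}\omega\,\widetilde{\Phi}(a(\nabla u_n)-a(\nabla u))\,dx\to0$. A standard subsequence-of-every-subsequence argument then upgrades this to convergence of the full sequence. I expect this step---the continuity of the Nemytskii operator in the Orlicz norm---to be the main obstacle, as it requires combining the two $\Delta_2$-conditions, a.e.\ convergence, and a Vitali-type generalized dominated convergence theorem.

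Finally, the argument for $J$ is entirely parallel, with $(\Phi,\varphi,\omega,\nabla u)$ replaced by $(\Psi,\psi,\omega_1,u)$: G\^ateaux differentiability follows from the same mean value and H\"older argument, using the already established fact that $\psi(|v|)\in K_{\omega_1}^{\widetilde{\Psi}}(\Omega)$, and continuity of $J'$ follows from the $\Delta_2$-conditions on $\Psi$ and $\widetilde{\Psi}$ implied by $(\Psi_1)$, together with the continuous---and, by Proposition \ref{embe}, even compact---embedding $\WW\hookrightarrow L_{\omega_1}^{\Psi}(\Omega)$, which makes the a.e.\ convergence and domination immediate. Combining the two parts, both $I$ and $J$ are of class $C^1$.
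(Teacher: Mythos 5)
Your proof is correct and follows exactly the standard route that the paper itself relies on: the paper omits the argument and simply refers to [Lemma 4.1, sabri], whose proof is precisely your combination of a dominated-convergence computation of the G\^ateaux derivative with continuity of the Nemytskii operators $\xi\mapsto\varphi(|\xi|)\xi/|\xi|$ and $s\mapsto\psi(|s|)s/|s|$ between the weighted Orlicz space and its complementary space, using the $\Delta_2$-conditions forced by $(\Phi_1)$ and $(\Psi_1)$. In effect you have supplied the details the paper leaves to the reference, with no gap.
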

\begin{proof}
	The proof is similar to the proof of \cite[Lemma 4.1]{sabri}, so we omit the proof.
\end{proof}

\begin{Proposition}
	The functional $I$ is coercive (we don't need $\Delta_{2}$-condition).
\end{Proposition}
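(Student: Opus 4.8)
The plan is to exploit only the lower growth bound contained in $(\Phi_1)$, namely $l\leq t\varphi(t)/\Phi(t)$, together with the Poincar\'e inequality, and to observe that the upper bound $m$ -- which, by the Krasnoselskii characterization recalled in Section \ref{sec2}, is equivalent to the $\Delta_2$-condition -- never enters the argument. First I would reduce coercivity to a statement about $\|\nabla u\|_{L_\omega^\Phi(\Omega)}$: by Proposition \ref{poincare} and the remark following it, on $\WW$ the seminorm $\|\nabla\cdot\|_{L_\omega^\Phi(\Omega)}$ is equivalent to the full norm $\|\cdot\|_{W_{\omega,\omega_{1}}^{1}L^{\Phi,\Psi}\left(\Omega\right)}$, so it suffices to show that $I(u)\to\infty$ as $\|\nabla u\|_{L_\omega^\Phi(\Omega)}\to\infty$.

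The key analytic input is a one-sided scaling property. Integrating the pointwise inequality $l/t\leq\varphi(t)/\Phi(t)$ (equivalently $\tfrac{d}{dt}\log\Phi(t)\geq l/t$) from $t$ to $\lambda t$ yields
$$
\Phi(\lambda t)\geq\lambda^{l}\,\Phi(t)\qquad\text{for all }\lambda\geq1,\ t\geq0.
$$
This derivation uses exclusively the lower index $l>1$, so the $\Delta_2$-condition is never needed. With this in hand I claim that
$$
I(u)=\int_{\Omega}\omega(x)\Phi\!\left(|\nabla u|\right)dx\;\geq\;\|\nabla u\|_{L_\omega^\Phi(\Omega)}^{\,l}
\qquad\text{whenever }\|\nabla u\|_{L_\omega^\Phi(\Omega)}\geq1 .
$$
Indeed, for any $\xi$ with $1<\xi<\|\nabla u\|_{L_\omega^\Phi(\Omega)}$ the definition of the Luxemburg norm \eqref{norm1} forces $\int_{\Omega}\omega(x)\Phi(|\nabla u|/\xi)\,dx>1$, while the scaling property gives $\Phi(|\nabla u|)\geq\xi^{l}\,\Phi(|\nabla u|/\xi)$; multiplying by $\omega$, integrating and then letting $\xi\uparrow\|\nabla u\|_{L_\omega^\Phi(\Omega)}$ delivers the claim. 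Since $l>1$, the right-hand side tends to $\infty$ as $\|\nabla u\|_{L_\omega^\Phi(\Omega)}\to\infty$, and coercivity follows.

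The step I expect to be the main obstacle is precisely this last lower bound on the modular $\int_{\Omega}\omega(x)\Phi(|\nabla u|)\,dx$ in terms of the Luxemburg norm \emph{without} the $\Delta_2$-condition. Under $\Delta_2$ one would simply invoke that the modular equals $1$ at the norm scale and apply the lower estimate of Proposition \ref{important}; here that identity is unavailable, so the argument must instead rest on the strict inequality $\int_{\Omega}\omega(x)\Phi(|\nabla u|/\xi)\,dx>1$ valid for every $\xi$ strictly below the norm, followed by a passage to the limit. The point worth emphasizing is that this is exactly the lower half of Proposition \ref{important}, whose proof invokes only the index $l$; hence the coercivity of $I$ is obtained with no recourse to the $\Delta_2$-condition, as asserted.
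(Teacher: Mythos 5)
Your proof is correct and rests on the same mechanism as the paper's: the definition of the Luxemburg norm forces the modular $\int_{\Omega}\omega\,\Phi(|\nabla u|/\xi)\,dx$ to exceed $1$ for every $\xi$ strictly below $\|\nabla u\|_{L_{\omega}^{\Phi}(\Omega)}$, and a one-sided scaling inequality for $\Phi$ then pulls the factor $\xi$ out of the integral; neither step touches the upper index $m$, hence no $\Delta_2$. The one substantive difference is the scaling inequality you invoke. You integrate $\varphi(t)/\Phi(t)\geq l/t$ to get $\Phi(\lambda t)\geq\lambda^{l}\Phi(t)$ for $\lambda\geq1$, which uses the lower half of $(\Phi_1)$ and yields the sharper bound $I(u)\geq\|\nabla u\|_{L_{\omega}^{\Phi}(\Omega)}^{l}$. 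The paper instead uses only the convexity estimate \eqref{es1} (equivalently $\Phi(\lambda t)\geq\lambda\Phi(t)$ for $\lambda\geq1$), valid for \emph{every} Young function, and settles for the linear bound $I(u)\geq\|u\|_{\WW}$, which already suffices for coercivity. This is not a cosmetic point: the coercivity of $I$ is reused in the proof of Theorem \ref{T1}, in the part of the paper where neither $\Delta_2$ nor $(\Phi_1)$ is assumed, so the paper's convexity-only argument covers that setting while yours formally requires the standing hypothesis $l>1$ from $(\Phi_1)$. If you intend your proof to serve in the $\Delta_2$-free section as well, you should either add the lower index bound as an explicit hypothesis or replace $\lambda^{l}$ by the convexity bound $\lambda$, after which the two arguments coincide.
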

\begin{proof}
	Assume that $\left\|u\right\|_{W_{0,\omega,\omega_{1}}^{1}L^{\Phi,\Psi}\left(\Omega\right)}\geq1+\epsilon$, for some $\epsilon>0$. Then, by \eqref{es1}, we get
	$$
	\frac{\left(1+\epsilon\right)\omega(x) }{\left\|u\right\|_{W_{0,\omega,\omega_{1}}^{1}L^{\Phi,\Psi}\left(\Omega\right)}}\Phi(|\nabla u|)\geq\omega(x)\Phi\left(\frac{\left( 1+\epsilon\right) |\nabla u|}{\left\|u\right\|_{W_{0,\omega,\omega_{1}}^{1}L^{\Phi,\Psi}\left(\Omega\right)}}\right).
	$$
	It follows that
	$$
	\frac{\left(1+\epsilon\right) }{\left\|u\right\|_{W_{0,\omega,\omega_{1}}^{1}L^{\Phi,\Psi}\left(\Omega\right)}}\int_{\Omega}\omega(x)\Phi(|\nabla u|)dx\geq\int_{\Omega}\omega(x)\Phi\left(\frac{\left( 1+\epsilon\right) |\nabla u|}{\left\|u\right\|_{W_{0,\omega,\omega_{1}}^{1}L^{\Phi,\Psi}\left(\Omega\right)}}\right)dx>1.
	$$
	Letting $\epsilon$ to $0$, we obtain
	$$
	I(u)\geq\left\|u\right\|_{W_{0,\omega,\omega_{1}}^{1}L^{\Phi,\Psi}\left(\Omega\right)}.
	$$
	Hence,
	$I(u)\longrightarrow\infty$, as $\left\|u\right\|_{W_{0,\omega,\omega_{1}}^{1}L^{\Phi,\Psi}\left(\Omega\right)}\longrightarrow\infty$. The proof is complete.
\end{proof}

\begin{Proposition}\label{4.2}
	The functional $I$ is weak* lower semicontinuous (we don't need $\Delta_{2}$-condition).
\end{Proposition}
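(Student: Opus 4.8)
The plan is to exploit the convexity of $\Phi$ together with the Legendre duality $\Phi(t)+\widetilde{\Phi}(s)\ge st$, and to represent $I$ as a supremum of weak* continuous affine functionals. First I would reduce the claim to a statement about the gradient alone: since the map $B\colon u\mapsto(u,\nabla u)$ is an isometry of $\WW$ into $L^{\Psi}_{\omega_{1}}(\Omega)\times\bigl(L^{\Phi}_{\omega}(\Omega)\bigr)^{N}$, it transfers the weak* topology, so a weak* convergent net $u_{n}\rightharpoonup^{*}u$ forces $\nabla u_{n}\rightharpoonup^{*}\nabla u$ in $\bigl(L^{\Phi}_{\omega}(\Omega)\bigr)^{N}$. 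Hence it suffices to prove that the modular
$$
I_{0}(v):=\int_{\Omega}\omega(x)\,\Phi(|v|)\,dx
$$
is weak* lower semicontinuous on $\bigl(L^{\Phi}_{\omega}(\Omega)\bigr)^{N}$, because $I(u)=I_{0}(\nabla u)$. Note that $I_{0}$ is convex, as $\Phi$ is convex.

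Next I would build the dual representation. For any bounded measurable vector field $g$ on $\Omega$, Young's inequality $g\cdot v\le\Phi(|v|)+\widetilde{\Phi}(|g|)$ gives, after multiplying by $\omega$ and integrating,
$$
I_{0}(v)\ \ge\ \ell_{g}(v):=\int_{\Omega}\omega(x)\,g\cdot v\,dx-\int_{\Omega}\omega(x)\,\widetilde{\Phi}(|g|)\,dx,
$$
so that $I_{0}(v)\ge\sup_{g}\ell_{g}(v)$, the supremum being taken over bounded $g$. For the reverse inequality I would use the truncations $g_{k}:=\min\{k,\varphi(|v|)\}\,\tfrac{v}{|v|}$, which are bounded and realize equality in Young's inequality in the limit; by Lemma \ref{important3} the quantity $\widetilde{\Phi}(\varphi(|v|))\le m\,\Phi(|v|)$ is integrable, so the monotone convergence theorem yields $\ell_{g_{k}}(v)\to I_{0}(v)$. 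Consequently $I_{0}(v)=\sup_{g}\ell_{g}(v)$, with $g$ ranging only over bounded fields.

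Finally I would conclude from the structure of the weak* topology. For each fixed bounded $g$ one has $\omega g\in\bigl(E^{\widetilde{\Phi}}_{\omega}(\Omega)\bigr)^{N}$, and since $\bigl(E^{\widetilde{\Phi}}_{\omega}\bigr)'=L^{\Phi}_{\omega}$, the linear part $v\mapsto\int_{\Omega}\omega\,g\cdot v\,dx$ is precisely a duality pairing against an element of the predual, hence weak* continuous (the weighted Hölder inequality \eqref{weithedh} guarantees it is well defined); the remaining term is a constant. Thus each $\ell_{g}$ is weak* continuous and affine, in particular weak* lower semicontinuous, and $I_{0}=\sup_{g}\ell_{g}$ is weak* lower semicontinuous as a pointwise supremum of such functionals. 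Transferring back through the isometry $B$ shows that $I$ is weak* lower semicontinuous.

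The main obstacle is exactly that we do \emph{not} assume the $\Delta_{2}$-condition: the space is then not reflexive, so one cannot invoke the usual shortcut that a convex, strongly closed set is weakly closed and identify the weak and weak* topologies. The representation must use the genuine predual $E^{\widetilde{\Phi}}_{\omega}$, which forces the dual variables $g$ to be bounded (equivalently, to lie in $E^{\widetilde{\Phi}}_{\omega}$). The delicate step is therefore the truncation-and-monotone-convergence argument showing that restricting to such $g$ still recovers the full modular $I_{0}(v)$, and it is Lemma \ref{important3} that makes the dominating term integrable.
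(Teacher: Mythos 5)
Your proof is correct and rests on the same mechanism as the paper's: both represent the convex modular as a supremum of affine functionals that are weak* continuous because the dual variables are taken in the predual $E^{\tilde\Phi}_{\omega}(\Omega)$. The difference lies in how the duality representation is obtained. The paper imports it wholesale from \cite[Lemma 1]{gossez1979orlicz}, selects a single $\epsilon$-optimal $\rho\in E^{\tilde\Phi}_{\omega}(\Omega)$ adapted to $u$, and compares $I(u_n)$ with $I(u)$ directly; you instead derive the representation from scratch by truncating the optimal dual field $\varphi(|v|)\,v/|v|$ at level $k$ and passing to the limit, then conclude by the general principle that a pointwise supremum of weak* continuous affine maps is weak* lower semicontinuous. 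Your version has two modest advantages: it is self-contained, and by pairing a vector-valued $g$ against $v$ via $g\cdot v$ the linear part stays genuinely linear in $v$, whereas the paper pairs $\rho$ against $|\nabla u_n|$ and must then invoke ``weak lower semicontinuity of norms'' to handle the modulus, a step it leaves rather implicit. One caveat: your appeal to Lemma \ref{important3} to dominate $\tilde\Phi(\varphi(|v|))$ uses $(\Phi_1)$, which the paper itself observes is equivalent to the $\Delta_2$-condition --- precisely what this proposition advertises as unnecessary. The appeal is also superfluous: the integrands $\min\{k,\varphi(|v|)\}\,|v|-\tilde\Phi\left(\min\{k,\varphi(|v|)\}\right)$ are nonnegative and increase pointwise to $\Phi(|v|)$, so the monotone convergence theorem applies with no domination whatsoever. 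With that one sentence adjusted, your argument is complete and, if anything, tidier than the paper's.
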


\begin{proof}
	Let $\left\lbrace u_{n} \right\rbrace_{n}\subset W_{0,\omega,\omega_{1}}^{1}L^{\Phi,\Psi}\left(\Omega\right)$ be a sequence  such that $u_{n}\rightharpoonup^{*}u$ in $W_{0,\omega,\omega_{1}}^{1}L^{\Phi,\Psi}\left(\Omega\right)$. By \cite[Lemma 1]{gossez1979orlicz}, for any given $v\in L_{\omega}^{\Phi}(\Omega)$ we have
	\begin{equation*}
		\begin{aligned}
			\sup \left\{\int_{\Omega} v \rho-\int_{\Omega} \omega\tilde{\Phi}(\rho) \mid \rho \in L_{\omega}^{\tilde{\Phi}}(\Omega)\right\} & \leq\int_{\Omega}\omega \Phi(v)
		\end{aligned}
	\end{equation*}
	and
	\begin{equation*}
		\int_{\Omega} \omega\Phi(v)\leq\sup \left\{\int_{\Omega} v \rho-\int_{\Omega} \omega\tilde{\Phi}(\rho) \mid \rho \in E_{\omega}^{\widetilde{\Phi}}(\Omega)\right\}.
	\end{equation*}
	Since $\left|\nabla u_{n} \right|\in L_{\omega}^{\Phi}(\Omega)$ for all $n\in\mathbb{N}$, we get by the above inequalities for an arbitrary positive $\epsilon$, $\exists \rho\in E_{\omega}^{\widetilde{\Phi}}(\Omega)$ such that
	\begin{equation}\label{1ST}
		\begin{aligned}
			\int_{\Omega}\omega \Phi(\left|\nabla u_{n} \right|)\geq\int_{\Omega} \left|\nabla u_{n} \right|\rho-\int_{\Omega} \omega\tilde{\Phi}(\rho)
		\end{aligned}
	\end{equation}
	and
	\begin{equation}\label{2RT}
		\int_{\Omega} \omega\Phi(\left|\nabla u \right|)\leq\int_{\Omega} \left|\nabla u \right| \rho-\int_{\Omega} \omega\tilde{\Phi}(\rho)+\epsilon.
	\end{equation}
	Clearly, we can assume that $\rho\geq0$. Combining (\ref{1ST}) with (\ref{2RT}), we obtain
	\begin{equation}\label{3RT}
		I(u_{n})-I(u)\geq \int_{\Omega} \left|\nabla u_{n}\right| \rho dx-\int_{\Omega} \left|\nabla u \right|\rho dx-\epsilon.
	\end{equation}
	On the other hand we have, $\nabla u_{n} \rho\rightharpoonup^{*}\nabla u\rho$ in $L_{\omega}^{1}(\Omega)$ and by the weak lower semicontinuity of norms we get
	\begin{equation}\label{4RT}
		\int_{\Omega}\omega|\nabla u| \rho=\|\nabla u \rho\|_{L_{\omega}^{1}(\Omega)} \leq \liminf \left\|\nabla u_n \rho\right\|_{L_{\omega}^{1}(\Omega)}=\liminf \int_{\Omega}\omega\left|\nabla u_n\right| \rho.
	\end{equation}
	Finaly, conbining (\ref{3RT}) with (\ref{4RT}) we get
	\begin{equation*}
		\liminf I\left(u_n\right) \geq I(u)-\epsilon.
	\end{equation*}
	Since $\epsilon$ is arbitrary, the proof is complete.
\end{proof}
\begin{Proposition}\label{homeomor}
	Under $(\Phi_{1})$, $I$ is convex, sequentially weakly lower semi-continuous and $I':W_{0,\omega,\omega_{1}}^{1}L^{\Phi,\Psi}\left(\Omega\right)\rightarrow \left(W_{0,\omega,\omega_{1}}^{1}L^{\Phi,\Psi}\left(\Omega\right) \right)'$ is an homeomorphism.
\end{Proposition}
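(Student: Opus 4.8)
The plan is to treat the three assertions in turn and then close the argument with the theory of monotone operators on a reflexive space.

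First, convexity and weak lower semicontinuity. For $u,v\in\WW$ and $t\in[0,1]$, the estimate $|\nabla(tu+(1-t)v)|\le t|\nabla u|+(1-t)|\nabla v|$ together with the monotonicity and convexity of $\Phi$ gives $\Phi(|\nabla(tu+(1-t)v)|)\le t\,\Phi(|\nabla u|)+(1-t)\,\Phi(|\nabla v|)$; multiplying by $\omega\ge 1$ and integrating proves that $I$ is convex. Next I note that $(\Phi_{1})$ makes both $\Phi$ and $\tilde\Phi$ satisfy the $\Delta_2$-condition, since the upper bound $t\varphi(t)/\Phi(t)\le m$ is equivalent to $\Delta_2$ for $\Phi$ while the lower bound $1<l\le t\varphi(t)/\Phi(t)$ gives it for $\tilde\Phi$; combined with the Poincaré inequality (Proposition \ref{poincare}) this makes $\WW$ reflexive and renders $I$ strongly continuous. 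A convex strongly continuous functional is sequentially weakly lower semicontinuous (Mazur), which also follows by upgrading the weak* lower semicontinuity of Proposition \ref{4.2} through reflexivity.

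Second, I would study the operator $I'$, which by Proposition \ref{cont} is well defined and given by
$$
\langle I'(u),v\rangle=\int_\Omega \omega(x)\,\frac{\varphi(|\nabla u|)}{|\nabla u|}\,\nabla u\cdot\nabla v\,dx .
$$
The vector field $a(\xi)=\varphi(|\xi|)|\xi|^{-1}\xi$ is the gradient of the convex function $\xi\mapsto\Phi(|\xi|)$ on $\mathbb{R}^{N}$, so that $(a(\xi)-a(\eta))\cdot(\xi-\eta)\ge 0$, with strict inequality for $\xi\ne\eta$ as guaranteed by $(\Phi_{1})$; integrating against $\omega$ shows that $I'$ is strictly monotone, hence injective. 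Boundedness and continuity of $I'$ follow from the weighted Hölder inequality \eqref{weithedh} and the control $\int_\Omega\omega\,\tilde\Phi(\varphi(|\nabla u|))\le m\int_\Omega\omega\,\Phi(|\nabla u|)$ of Lemma \ref{important3}. Coercivity is a consequence of $(\Phi_{1})$: since $t\varphi(t)\ge l\,\Phi(t)$,
$$
\langle I'(u),u\rangle=\int_\Omega\omega\,\varphi(|\nabla u|)\,|\nabla u|\,dx\ge l\,I(u)\ge l\,\min\{\|u\|^{l},\|u\|^{m}\}
$$
by Proposition \ref{important}, so that $\langle I'(u),u\rangle/\|u\|\to\infty$ as $\|u\|\to\infty$.

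Third, with $\WW$ reflexive and $I'$ strictly monotone, continuous and coercive, the Minty--Browder theorem yields that $I'$ is a bijection of $\WW$ onto its dual. The remaining and principal difficulty is to show that $(I')^{-1}$ is continuous, for which I would prove that $I'$ is of type $(S_+)$: if $u_n\rightharpoonup u$ and $\limsup_n\langle I'(u_n),u_n-u\rangle\le 0$ then $u_n\to u$ strongly. From monotonicity one first forces $\langle I'(u_n)-I'(u),u_n-u\rangle\to 0$, i.e. the nonnegative integrand $(a(\nabla u_n)-a(\nabla u))\cdot\nabla(u_n-u)$ tends to $0$ in $L^1_\omega$; passing to an a.e.-convergent subsequence and invoking strict monotonicity gives $\nabla u_n\to\nabla u$ a.e., and this is converted into modular and then norm convergence through the $\Delta_2$-type inequality \eqref{L2}, a Vitali equi-integrability argument, and Proposition \ref{important}. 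Here lies the main obstacle, because $(\Phi_{1})$ alone does not supply the uniform convexity of Lemma \ref{important2} (which requires $(\Phi_{2})$), so the passage from the vanishing of the monotonicity integral to strong convergence must be carried out by the equi-integrability route rather than by a quantitative convexity bound. Granting $(S_+)$, continuity of the inverse is standard: if $f_n\to f$ and $u_n=(I')^{-1}(f_n)$, coercivity bounds $\{u_n\}$, reflexivity extracts $u_{n_k}\rightharpoonup\bar u$, and $\langle I'(u_{n_k}),u_{n_k}-\bar u\rangle=\langle f_{n_k},u_{n_k}-\bar u\rangle\to 0$ forces $u_{n_k}\to\bar u$ by $(S_+)$; continuity and injectivity of $I'$ then identify $\bar u=(I')^{-1}(f)$, and the usual subsequence principle gives convergence of the whole sequence. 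Hence $I'$ is a homeomorphism.
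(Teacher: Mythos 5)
Your argument is correct and follows the same overall architecture as the paper's proof (convexity of $\Phi$ gives convexity of $I$; weak lower semicontinuity; strict monotonicity plus coercivity gives bijectivity of $I'$; the $(S_+)$ property gives continuity of $(I')^{-1}$ by the standard subsequence argument, which is verbatim the paper's closing step). The one place where you genuinely diverge is the proof of the $(S_+)$ property, and your instinct about the ``main obstacle'' is exactly on target. The paper derives $(S_+)$ from the quantitative convexity estimate of Lemma \ref{important2}, namely
$$
\langle I'(u)-I'(u_n),u-u_n\rangle \geq 4\int_{\Omega}\omega(x)\Phi\left(\left|\tfrac{\nabla u-\nabla u_n}{2}\right|\right)dx,
$$
which combined with Proposition \ref{important} bounds $\min\{\|u-u_n\|^{l},\|u-u_n\|^{m}\}$ directly by the monotonicity pairing. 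But Lemma \ref{important2} requires $(\Phi_2)$ (convexity of $t\mapsto\Phi(\sqrt t)$), which is \emph{not} among the hypotheses of the Proposition as stated; the paper's proof therefore silently uses an extra assumption. Your alternative --- forcing $\langle I'(u_n)-I'(u),u_n-u\rangle\to 0$ by monotonicity, extracting a.e.\ convergence of the gradients from strict monotonicity of the vector field $a(\xi)=\varphi(|\xi|)|\xi|^{-1}\xi$, establishing $I(u_n)\to I(u)$ from the subgradient inequality and weak lower semicontinuity, and then upgrading to modular and norm convergence via \eqref{L2} and Vitali --- is the classical Leray--Lions route; it is longer and needs the intermediate lemma that $(a(\xi_n)-a(\xi))\cdot(\xi_n-\xi)\to 0$ implies $\xi_n\to\xi$, but it buys a proof valid under $(\Phi_1)$ alone, which is what the statement actually claims. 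Two small cautions: strict monotonicity of $a$ (hence injectivity of $I'$, which the paper also asserts without proof) needs $\varphi$ strictly increasing, which is slightly more than the Young-function axioms guarantee; and your surjectivity step via Minty--Browder is the correct justification for what the paper compresses into ``Since $I$ is coercive, thus $I'$ is a surjection.''
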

\begin{proof}
	Due to the convexity of $\Phi$ one has the convexity of $I$ and
	from Lemma \ref{4.2}, if $\Phi$ satisfies $\Delta_{2}$ condition, then $I$ sequentially weakly lower semi-continuous. Now, we claim that $I':X\rightarrow X^*$ is an homeomorphism. Indeed, taking into account that $I^{\prime}$ is bounded, It remains to be showed that if $\{u_n\}_{n\in\mathbb{N}}$ is a sequence in $W_{0,\omega,\omega_{1}}^{1}L^{\Phi,\Psi}\left(\Omega\right)$ such that
	\begin{equation} \label{asump}
		u_n\rightharpoonup u, \quad I'(u_n)\rightharpoonup v, \quad \langle I'(u_n),u_n\rangle\to\langle v,u\rangle\
	\end{equation}
	then $u_n \to u$ in $W_{0,\omega,\omega_{1}}^{1}L^{\Phi,\Psi}\left(\Omega\right)$.

	Since $\Phi$ is convex, we have
	$$
	\Phi(|\nabla u|)\leq \Phi\bigg{(}\bigg{|}\frac{\nabla u+\nabla u_n}{2}\bigg{|}\bigg{)}+\varphi(|\nabla u|)\frac{\nabla u}{|\nabla u|}.\frac{\nabla u-\nabla u_n}{2}
	$$
	and
	$$
	\Phi(|\nabla u_{n}|)\leq \Phi\bigg{(}\bigg{|}\frac{\nabla u+\nabla u_n}{2}\bigg{|}\bigg{)}+\varphi(|\nabla u_n|)\frac{\nabla u_n}{|\nabla u_n|}.\frac{\nabla u_n-\nabla u}{2}
	$$
	Adding the above two relations and integrating over $\Omega$ we find that
	\begin{align*}\label{umo1}
		\frac{1}{2}&\int_{\Omega}\omega(x) \Big(\varphi(|\nabla u|)\frac{\nabla u}{|\nabla u|}-\varphi(|\nabla u_n|)\frac{\nabla u_n}{|\nabla u_n|}  \Big)(\nabla u-\nabla u_n)\ dx\nonumber\\
		&\geq \int_{\Omega}\omega(x)\Phi\left(\left|\nabla u\right| \right)dx  + \int_{\Omega}\omega(x)\Phi\left(\left|\nabla u_{n}\right| \right)dx - 2 \int_{\Omega}\omega(x)\Phi\left(\frac{\nabla u+\nabla u_n}{2} \right)dx.
	\end{align*}
	By applying Lemma \ref{important2}, we get
	$$
	\langle I'(u)-I'(u_j), u-u_n\rangle \geq 4 \int_{\Omega}\omega(x)\Phi\left(\nabla u-\nabla u_{n}\right)dx.
	$$
	This, together  with Proposition \ref{important} yields
	\begin{equation} \label{eqxx1}
		\langle I'(u)-I'(u_n), u-u_j\rangle \geq 4\min\left\lbrace \left\|u-u_{n}\right\|^{l}_{W_{0,\omega,\omega_{1}}^{1}L^{\Phi,\Psi}\left(\Omega\right)},\left\|u-u_{n}\right\|^{m}_{W_{0,\omega,\omega_{1}}^{1}L^{\Phi,\Psi}\left(\Omega\right)}\right\rbrace .
	\end{equation}
	
	On the other hand, the compact embedding $W_{0,\omega,\omega_{1}}^{1}L^{\Phi,\Psi}\left(\Omega\right)\hookrightarrow\hookrightarrow L_{\omega_{1}}^{\Psi}\left(\Omega\right)$ gives that $u_n\to u$ in $L_{\omega_{1}}^{\Psi}\left(\Omega\right)$ and a.e. in $\Omega$, which, mixed up with the assumptions \eqref{asump} allows us to deduce that
	$$
	\displaystyle\lim_{n\to\infty}\langle I'(u_n)-I'(u),u_n-u \rangle=\displaystyle\lim_{n\to\infty} \left(\langle I'(u_n),u_n\rangle-\langle I'(u_n),u\rangle-\langle I'(u),u_n-u\rangle \right)=0.
	$$
	Hence, from \eqref{eqxx1}, $\left\|u-u_{n}\right\|_{W_{0,\omega,\omega_{1}}^{1}L^{\Phi,\Psi}\left(\Omega\right)}\to0$ as $n\to\infty$.
	This allow us to claim that $I$ is of type (S+). In addition since $I$ is strictly monotone, $I$ is an injection. Since $I$ is coercive, thus $I'$ is a surjection. Hence, $I'$ has an inverse mapping $\left(I'\right)^{-1}:\left( W_{0,\omega,\omega_{1}}^{1}L^{\Phi,\Psi}\left(\Omega\right)\right)'\mapsto W_{0,\omega,\omega_{1}}^{1}L^{\Phi,\Psi}\left(\Omega\right)$.
	If $f_n, f \in \left( W_{0,\omega,\omega_{1}}^{1}L^{\Phi,\Psi}\left(\Omega\right)\right)', f_n \rightarrow f$, let $u_n=\left(I^{\prime}\right)^{-1}\left(f_n\right), u=\left(I^{\prime}\right)^{-1}(f)$, then $I^{\prime}\left(u_n\right)=f_n, I^{\prime}(u)=f$. So, $\left\{u_n\right\}$ is bounded in $W_{0,\omega,\omega_{1}}^{1}L^{\Phi,\Psi}\left(\Omega\right)$. Since $f_n \rightarrow f$, we get
	$$
	\lim _{n \rightarrow \infty}\langle I^{\prime}\left(u_n\right)-I^{\prime}\left(u_0\right), u_n-u\rangle=\lim _{n \rightarrow \infty}\langle f_n, u_n-u\rangle=0 .
	$$
	Since $I$ is of type $\left(S_{+}\right)$, we conclude that $u_{n} \rightarrow u$, hence, $I^{\prime}$ is continuous. The proof is complete.
\end{proof}
\begin{Lemma}
	$J'$ is compact
\end{Lemma}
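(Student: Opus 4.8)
The plan is to show that $J'$ is \emph{completely continuous}, i.e.\ that whenever $u_n\rightharpoonup u$ in $\WW$ one has $J'(u_n)\to J'(u)$ strongly in the dual $\left(\WW\right)'$. Recall that for $w\in\WW$,
$$
\langle J'(w),v\rangle=\int_{\Omega}\omega_{1}(x)\,\psi(|w|)\frac{w}{|w|}\,v\,dx,\qquad v\in\WW.
$$
By the compact embedding $\WW\hookrightarrow\hookrightarrow L_{\omega_1}^{\Psi}(\Omega)$ (Proposition \ref{embe}, using $E_{\omega_1}^{\Psi}=L_{\omega_1}^{\Psi}$, valid since $(\Psi_1)$ forces $\Psi$ to satisfy the $\Delta_2$-condition), from $u_n\rightharpoonup u$ I deduce $u_n\to u$ strongly in $L_{\omega_1}^{\Psi}(\Omega)$, and, after a further extraction along any subsequence, a.e.\ in $\Omega$.

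First I would reduce the estimate to a Nemytskii continuity statement. Using the weighted H\"older inequality \eqref{weithedh} for $\Psi$ and then the continuous embedding $\WW\hookrightarrow L_{\omega_1}^{\Psi}(\Omega)$ from Proposition \ref{embe}, for every $v$ with $\|v\|_{\WW}\le 1$,
$$
\left|\langle J'(u_n)-J'(u),v\rangle\right|
\le 2\left\|\psi(|u_n|)\tfrac{u_n}{|u_n|}-\psi(|u|)\tfrac{u}{|u|}\right\|_{L_{\omega_1}^{\widetilde{\Psi}}(\Omega)}\|v\|_{L_{\omega_1}^{\Psi}(\Omega)}
\le 2C\left\|\psi(|u_n|)\tfrac{u_n}{|u_n|}-\psi(|u|)\tfrac{u}{|u|}\right\|_{L_{\omega_1}^{\widetilde{\Psi}}(\Omega)}.
$$
Taking the supremum over such $v$, it therefore suffices to prove that the Nemytskii operator $w\mapsto \psi(|w|)\frac{w}{|w|}$ is continuous from $L_{\omega_1}^{\Psi}(\Omega)$ into $L_{\omega_1}^{\widetilde{\Psi}}(\Omega)$, applied to the convergent sequence $u_n\to u$.

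To prove this continuity I would argue by the subsequence principle: it is enough to show that every subsequence of $(u_n)$ admits a further subsequence along which $\psi(|u_n|)\frac{u_n}{|u_n|}\to\psi(|u|)\frac{u}{|u|}$ in $L_{\omega_1}^{\widetilde{\Psi}}(\Omega)$. Fix such a subsequence and extract one converging a.e.\ in $\Omega$; since $t\mapsto\psi(|t|)\operatorname{sgn}(t)$ is continuous (here $\psi(0)=0$ by $(\varphi 1)$), the integrands converge to $0$ a.e. Because $l_1>1$ in $(\Psi_1)$ forces $\widetilde{\Psi}$ to satisfy the $\Delta_2$-condition, norm convergence in $L_{\omega_1}^{\widetilde{\Psi}}$ is equivalent to modular convergence, so it remains to show
$$
\int_{\Omega}\omega_{1}(x)\,\widetilde{\Psi}\!\left(\left|\psi(|u_n|)\tfrac{u_n}{|u_n|}-\psi(|u|)\tfrac{u}{|u|}\right|\right)dx\longrightarrow 0.
$$
Combining the analogue of \eqref{L2'} for $\widetilde{\Psi}$ with the analogue of Lemma \ref{important3} for $\Psi$ (namely $\widetilde{\Psi}(\psi(t))\le m_1\Psi(t)$), and using that $\widetilde{\Psi}$ is nondecreasing, yields the pointwise domination
$$
\widetilde{\Psi}\!\left(\left|\psi(|u_n|)\tfrac{u_n}{|u_n|}-\psi(|u|)\tfrac{u}{|u|}\right|\right)\le \mathbf{\tilde C}\,m_{1}\bigl(\Psi(|u_n|)+\Psi(|u|)\bigr).
$$
Since $u_n\to u$ in $L_{\omega_1}^{\Psi}(\Omega)$ and $\Psi$ satisfies $\Delta_2$, one has $\int_{\Omega}\omega_1\Psi(|u_n|)\to\int_{\Omega}\omega_1\Psi(|u|)$, so the dominating sequence converges in $L^1(\omega_1\,dx)$; a generalized dominated convergence theorem (equivalently Vitali's theorem, the right-hand side being uniformly integrable) then gives the desired modular convergence.

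The main obstacle is this last step: making the passage to the limit rigorous in the Orlicz modular, i.e.\ justifying the generalized dominated convergence with a dominating family that is only $L^1$-convergent rather than fixed. This is precisely where the $\Delta_2$-condition on both $\Psi$ and $\widetilde{\Psi}$ (guaranteed by $1<l_1\le m_1<\infty$ in $(\Psi_1)$) and the growth bound of the type in Lemma \ref{important3} are essential, while the subsequence principle is what upgrades the a.e.\ convergence, available only along subsequences from the compact embedding, to convergence of the full sequence.
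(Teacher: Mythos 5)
Your argument is correct and follows the same overall strategy as the paper, but the comparison is worth spelling out because the paper's own proof of this lemma is only a two\-line pointer: it asserts complete continuity of $J'$ ``as in \cite[Lemma 4.1]{sabri}'' and then invokes Proposition \ref{compact} to pass from complete continuity to compactness. The detailed argument you supply is essentially the one the paper itself writes out when verifying hypothesis $(h_3)$ in the proof of Theorem \ref{T05}: compact embedding of $\WW$ into $L^{\Psi}_{\omega_1}(\Omega)$, H\"older in the dual pair $(\Psi,\widetilde{\Psi})$ via \eqref{weithedh}, the pointwise bound $\widetilde{\Psi}\bigl(\psi(|u_n|)+\psi(|u|)\bigr)\le \mathbf{\tilde C}\,m_{1}\bigl(\Psi(|u_n|)+\Psi(|u|)\bigr)$ from Lemma \ref{lema.prop} and Lemma \ref{important3}, and a dominated convergence step. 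Two remarks. First, you stop at ``$J'$ is completely continuous''; to literally obtain the stated compactness you should add the paper's final step, Proposition \ref{compact}, or equivalently note that bounded sets of the reflexive space $\WW$ are weakly sequentially precompact, so complete continuity implies compactness. Second, your handling of the last step is in fact more careful than the paper's: where the parallel argument in the proof of Theorem \ref{T05} invokes a uniform pointwise bound $|\Psi(|u_n|)|\le K$ (which does not follow from norm convergence in $L^{\Psi}_{\omega_1}(\Omega)$), you use a generalized dominated convergence (Vitali) argument with majorants converging in $L^1$, together with the subsequence principle and the $\Delta_2$-conditions for $\Psi$ and $\widetilde{\Psi}$ supplied by $(\Psi_1)$. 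This buys a rigorous passage to the limit in the conjugate modular and repairs a genuine, if minor, gap in the paper's version of the estimate.
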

\begin{proof}
	Using similar proof as in \cite[Lemma 4.1]{sabri}, we get that $J'$ is completly continous and by Proposition \ref{compact} the proof is complete.
\end{proof}
Now, we are in position to prove Theorem \ref{T0}
\begin{proof}[\textbf{Proof of Theorem \ref{T0}}]
	\begin{equation*}
		v_{d}:=\left\{
		\begin{array}{ccc}
			&0,   & \; \; \; \; \; \mbox{if}\ \  x\in\Omega\setminus B(x_0,D),\\
			&  \displaystyle \frac{2d}{D}(D-|x-x_0|),  &\qquad\; \; \; \; \; \; \; \; \mbox{if}\ \ x\in B(x_0,D)\setminus B(x_0,\frac{D}{2}),\\
			&d,  & \mbox{if}\ \ x\in B(x_0,\frac{D}{2}),\\
		\end{array}
		\right.
	\end{equation*}	
	it is easy to see that
	\begin{equation*}
		\frac{\partial v_{d}(x)}{\partial x_{i}}:=\left\{
		\begin{array}{ccc}
			&0,   & \; \; \; \; \; \mbox{if}\ \  x\in \Omega \backslash B\left(x_{0}, D\right) \cup B\left(x_{0}, \frac{D}{2}\right),\\
			&  \displaystyle \frac{-2d}{D}\frac{\left(x_i-x_{i,0} \right) }{\left|x_i-x_0 \right| },  &\qquad\; \; \; \; \; \; \; \; \mbox{if}\ \ x\in B(x_0,D)\setminus B(x_0,\frac{D}{2}),\\
			
		\end{array}
		\right.
	\end{equation*}
	we have
	\begin{equation*}
		\begin{aligned}
			I(v_{d})&=\int_{\Omega}\omega(x)\Phi\left(\left|\nabla v_{d}\right|\right)dx\\
			&=\int_{B(x_0,D)\setminus B(x_0,\frac{D}{2})}\omega(x)\Phi\left(\frac{2d}{D}\right)dx
		\end{aligned}
	\end{equation*}
	By Proposition \ref{important}, we get
	\begin{equation}\label{ine}
		\begin{aligned}
			\min\left\lbrace\left\|\frac{2d}{D}\right\|_{L_{\omega}^{\Phi}\left(\Omega\right)}^{l},\left\|\frac{2d}{D}\right\|_{L_{\omega}^{\Phi}\left(\Omega\right)}^{m}  \right\rbrace\leq I(v_{d})\leq\max\left\lbrace\left\|\frac{2d}{D}\right\|_{L_{\omega}^{\Phi}\left(\Omega\right)}^{l},\left\|\frac{2d}{D}\right\|_{L_{\omega}^{\Phi}\left(\Omega\right)}^{m} \right\rbrace \\
		\end{aligned}
	\end{equation}
	On the other hand,
	\begin{equation*}
		\begin{aligned}
			J(v_{d})&=\int_{\Omega}\omega_{1}(x)\Psi\left(\left| v_{d}\right|\right)dx\\
			&\geq\int_{B(x_0,\frac{D}{2})}\omega_{1}(x)\Psi\left(\left|d\right| \right)dx\\
			&\geq\min\left\lbrace\left|d\right|^{l_{1}},\left|d\right|^{m_{1}}\right\rbrace\Psi\left(1\right)\frac{\pi^{\frac{N}{2}}}{\frac{N}{2}\Gamma(\frac{N}{2})}\left(\frac{D}{2}\right)^{N}
		\end{aligned}
	\end{equation*}
	Hence,
	\begin{equation*}
		\begin{aligned}
			\frac{J(v_{d})}{I(v_{d})}\geq\frac{\min\left\lbrace\left|d\right|^{l_{1}},\left|d\right|^{m_{1}}\right\rbrace\Psi\left(1\right)\frac{\pi^{\frac{N}{2}}}{\frac{N}{2}\Gamma(\frac{N}{2})}\left(\frac{D}{2}\right)^{N} }{\left(2N\right)^{m}\max\left\lbrace\left\|\frac{d}{D}\right\|_{L_{\omega}^{\Phi}\left(\Omega\right)}^{l},\left\|\frac{d}{D}\right\|_{L_{\omega}^{\Phi}\left(\Omega\right)}^{m} \right\rbrace}:=\gamma_{d}.
		\end{aligned}
	\end{equation*}
	By (\ref{ine}) and since $r<\min\left\lbrace\left\|\frac{2Nd}{D}\right\|_{L_{\omega}^{\Phi}\left(\Omega\right)}^{l},\left\|\frac{2Nd}{D}\right\|_{L_{\omega}^{\Phi}\left(\Omega\right)}^{m}  \right\rbrace$, we obtain that $I(v_{d})>r$. In the sequel, let $u\in$ such that $I(u)<r$. Notice that by Proposition \ref{important}, we have
	\begin{equation*}
		r>I(u):=\int_{\Omega}\omega(x)\Phi(\left|\nabla u\right|)dx\geq\min\left\lbrace\left\|\nabla u\right\|_{L_{\omega}^{\Phi}\left(\Omega\right)}^{l},\left\|\nabla u\right\|_{L_{\omega}^{\Phi}\left(\Omega\right)}^{m}  \right\rbrace.
	\end{equation*}
	Using the above inequality with Proposition \ref{poincare}, we get
	\begin{equation*}
		\begin{aligned}
			J(u)&=\int_{\Omega}\omega_{1}(x)\Psi\left(\left| u\right|\right)dx\\
			&\leq\max\left\lbrace\left\| u\right\|_{L_{\omega_{1}}^{\Psi}\left(\Omega\right)}^{l_{1}},\left\| u\right\|_{L_{\omega_{1}}^{\Psi}\left(\Omega\right)}^{m_{1}}  \right\rbrace\\
			&\leq\max\left\lbrace C^{l_{1}}_{1},C^{m_{1}}_{1} \right\rbrace\max\left\lbrace\left\| \nabla u\right\|_{L_{\omega}^{\Phi}\left(\Omega\right)}^{l_{1}},\left\| \nabla u\right\|_{L_{\omega}^{\Phi}\left(\Omega\right)}^{m_{1}}  \right\rbrace\\
			&\leq\max\left\lbrace C^{l_{1}}_{1},C^{m_{1}}_{1} \right\rbrace\max\left\lbrace \max\left\lbrace\left| r \right|^{\frac{l_{1}}{l}},\left| r\right|^{\frac{l_{1}}{m}}\right\rbrace,\max\left\lbrace\left| r \right|^{\frac{m_{1}}{l}},\left| r\right|^{\frac{m_{1}}{m}}  \right\rbrace\right\rbrace .
		\end{aligned}
	\end{equation*}
	Therefore
	\begin{equation*}
		\frac{1}{r}\underset{I(u)<r}{\sup}J(u)\leq\max\left\lbrace C^{l_{1}}_{1},C^{m_{1}}_{1} \right\rbrace\max\left\lbrace \max\left\lbrace\left| r \right|^{\frac{l_{1}}{l}},\left| r\right|^{\frac{l_{1}}{m}}\right\rbrace,\max\left\lbrace\left| r \right|^{\frac{m_{1}}{l}},\left| r\right|^{\frac{m_{1}}{m}}  \right\rbrace\right\rbrace:=\tilde{\omega_{r}}
	\end{equation*}
	In the next step, we shall prove that for each $\alpha>0$, the energy functional $I-\lambda J$ is coercive. Indeed, take $u\in W_{0,\omega,\omega_{1}}^{1}L^{\Phi,\Psi}\left(\Omega\right)$ such that 
	$\left\|u \right\|_{W_{0,\omega,\omega_{1}}^{1}L^{\Phi,\Psi}\left(\Omega\right)}=\left\|\nabla u \right\|_{L_{\omega}^{\Phi}(\Omega)}\geq1$.
	By (\ref{*}) in Lemma \ref{algebric1}, we have
	$$
	\begin{aligned}
		\int_{\Omega}\omega_{1}(x)\Psi( u(x))dx
		\leq2\Psi(T)\left\|\omega_{1}\right\|_{L_{\omega_{1}}^{\widetilde{\Psi}}\left(\Omega_{k}\right)}\left\|1\right\|_{L_{\omega_{1}}^{\Psi}\left(\Omega_{k}\right)}
		+1.
	\end{aligned}
	$$
	It follows from Proposition \ref{important} and using the fact that $\left\|\nabla u \right\|_{L_{\omega}^{\Phi}(\Omega)}\geq1$ yields
	\begin{equation*}
		\begin{aligned}
			I(u)-\lambda J(u)&\geq\int_{\Omega}\omega(x)\Phi\left(\left|\nabla u\right|\right)dx-\lambda\left[ 2\Psi(T)\left\|\omega_{1}\right\|_{L_{\omega_{1}}^{\widetilde{\Psi}}\left(\Omega_{k}\right)}\left\|1\right\|_{L_{\omega_{1}}^{\Psi}\left(\Omega_{k}\right)}
			+1\right]\\
			&\geq\min\left\lbrace\left\| u\right\|_{L_{\omega_{1}}^{\Psi}\left(\Omega\right)}^{l_{1}},\left\| u\right\|_{L_{\omega_{1}}^{\Psi}\left(\Omega\right)}^{m_{1}}  \right\rbrace-\lambda\left[ 2\Psi(T)\left\|\omega_{1}\right\|_{L_{\omega_{1}}^{\widetilde{\Psi}}\left(\Omega_{k}\right)}\left\|1\right\|_{L_{\omega_{1}}^{\Psi}\left(\Omega_{k}\right)}
			+1\right]\\
			&\geq\left\| \nabla u\right\|_{L_{\omega}^{\Phi}\left(\Omega\right)}^{l}-\lambda\left[ 2\Psi(T)\left\|\omega_{1}\right\|_{L_{\omega_{1}}^{\widetilde{\Psi}}\left(\Omega_{k}\right)}\left\|1\right\|_{L_{\omega_{1}}^{\Psi}\left(\Omega_{k}\right)}
			+1\right]
		\end{aligned}
	\end{equation*}
	Thus, $I(u)-\lambda J(u)$ coercive for all $\lambda>0$, particulary for $\lambda\in\Lambda_r:=\left( \frac{I(v_{d})}{J(v_{d})}, \frac{r}{\displaystyle\sup_{I(u)\leq r}J(u)}\right)$.
	It is clear that $I$ and $J$ satisfy Theorem \ref{bonano} conditions. Therefore, for each $\lambda\in\Lambda_r$, the functional $I-\lambda J$ has at least three distinct critical points in $W_{0,\omega,\omega_{1}}^{1}L^{\Phi,\Psi}\left(\Omega\right)$. The proof is complete.
\end{proof}

\section{Proof of Theorem \ref{T05}} \label{LS}
We consider the space $X=W_{0,\omega,\omega_{1}}^{1}L^{\Phi,\Psi}\left(\Omega\right)$ and the functionals $B(u)=I(u)$, $A(u)=J(u)$, where $X$, $A$ and $B$ are mentioned in Theorem \ref{B1}.
\begin{proof}[Proof of Theorem \ref{T05}]
	By Proposition \ref{cont} the functionals $I$ and $J$ are  $C^{1}\left( W_{0,\omega,\omega_{1}}^{1}L^{\Phi,\Psi}\left(\Omega\right),\R\right)$ and it is clear that $I(0)=J(0)=0$. Thus, the condition $(h_{1})$ of Theorem \ref{B1} hold true.  We notice that from assumption $(\Psi_{1})$ and Proposition \ref{important}
	$$
	l \xi^-(\|u\|_{L_{\omega_{1}}^\Psi(\Omega)})\leq l J(u)\leq \langle J'(u),u\rangle \leq m J(u)\leq m\xi^+(\|u\|_{L_{\omega_{1}}^\Psi(\Omega)}),
	$$
	where, $\xi^-$ and $\xi^+$ denote $\min\left\lbrace t^{l},t^{m} \right\rbrace$ and $\max\left\lbrace t^{l},t^{m} \right\rbrace$ respectively.
	Then, immediately we obtain
	$$
	\langle J'(u),u\rangle=0 \ \Leftrightarrow \ J(u)=0 \ \Leftrightarrow \ u=0.
	$$
	So, it is still necessary to verify that $J'$ is strongly continuous. Let $u_n\rightharpoonup u$ in $W_{0,\omega,\omega_{1}}^{1}L^{\Phi,\Psi}\left(\Omega\right)$, then $\{u_n\}_{n\in\mathbb{N}}$ is bounded in $W_{0,\omega,\omega_{1}}^{1}L^{\Phi,\Psi}\left(\Omega\right)$. We must demonstrate that $J'(u_n)\to J'(u)$ in $(W_{0,\omega,\omega_{1}}^{1}L^{\Phi,\Psi}\left(\Omega\right))^{'}$. Notice that
	\begin{align*}
		|\langle J'(u_{n})-J'(u),v\rangle|&=\left|\int_{\Omega}\left(\omega_{1}\psi(|u_n|)\frac{u_n}{|u_{n}|}-\omega_{1}\psi(|u|)\frac{u}{|u|}\right) v\,dx\right|\\
		&\leq\left|\int_{\Omega} \omega_{1}\psi(|u_n|)\left(\frac{u_{n}}{|u_{n}|}-\frac{u}{|u|}\right) v\,dx\right| +\left|\int_{\Omega}\omega_{1}(\psi(|u_n|)-\psi(|u|))\frac{u}{|u|}v\,dx\right| \\
		&:=J_{1,n}+J_{2,n}.
	\end{align*}
	
	We indicate $\frac{u_{n}}{|u_{n}|}-\frac{u}{|u|}$ by $X_n$. Next, we show that 
	\begin{equation}\label{J1N}
		J_{1,n}\rightarrow0 \ \text{as}\  n\rightarrow\infty.
	\end{equation}
	Indeed, Lemma \ref{important3} and H\"{o}lder's inequality may be used to get
	$$
	J_{1,n}\leq \|\psi(|u_n|)\|_{L_{\omega_{1}}^{\widetilde \Psi}(\Omega)}\|X_n v\|_{L_{\omega_{1}}^\Psi(\Omega)}\rightarrow 0,\ n\rightarrow+\infty.
	$$
	On the other hand since $u_n\to u$ a.e. in $\Omega$ and $\Psi(|X_n v|)\leq 2\Psi(|v|)\in\ L^1(\Omega)$, we deduce that
	$$
	\Psi(|X_n v|)\to 0\ \text{a.e. in}\ \Omega.
	$$
	By applying dominated convergence theorem, we infer that
	$$
	\|X_n v\|_{L_{\omega_{1}}^\Psi(\Omega)}\rightarrow 0,\ n\rightarrow+\infty.
	$$
	Moreover, $(\psi(|u_n|))$ is bounded in $L_{\omega_{1}}^{\widetilde{\Psi}}(\Omega)$. So
	$I_{1,n}\rightarrow 0,\ n\rightarrow+\infty$ as expected.
	
	Similarly, by using the $\Delta_2$ condition, Lemma \ref{important3} and H\"{o}lder's inequality we get
	$$
	J_{2,n}\leq \|\psi(|u_n|)-\psi(|u|)\|_{L_{\omega_{1}}^{\widetilde \Psi}(\Omega)}\|v\|_{L_{\omega_{1}}^\Psi(\Omega)}.
	$$
	We need to prove that 
	\begin{equation}\label{J2N}
		J_{2,n}\rightarrow0 \ \text{as}\  n\rightarrow\infty.
	\end{equation}. Indeed, since $u_n \rightharpoonup u$ in $W_{0,\omega,\omega_{1}}^{1}L^{\Phi,\Psi}\left(\Omega\right)$, in light of the compact embedding $W_{0,\omega,\omega_{1}}^{1}L^{\Phi,\Psi}\left(\Omega\right)\hookrightarrow\hookrightarrow L_{\omega_{1}}^{\Psi}\left(\Omega\right)$, $u_n\to u$ strongly in $L_{\omega_{1}}^\Psi(\Omega)$ and a.e. in $\R^n$. Moreover, $\widetilde{\Psi}(|\psi(|u_n|)-\psi(|u|)|)\to 0$  a.e. in $\Omega$. On the other hand, by Lemma \ref{lema.prop} and Lemma \ref{important3}, we have
	\begin{align*}
		\tilde{\Psi}(|\psi(|u_n|)-\psi(|u|)|)& \leq \widetilde{\Psi}(|\psi(|u_n|)+\psi(|u|)|)\\
		&\leq \mathbf{\widetilde C} \left[\widetilde{\Psi}(|\psi(|u_n|)|)+\widetilde{\Psi}(|\psi(|u|)|)\right]\\
		&\leq \mathbf{\widetilde C}m \left[ \Psi(|u_n|)+ \Psi(|u|)\right]\\
		&\leq \mathbf{\widetilde C}m \left[K+ \Psi(|u|)\right]
	\end{align*}where $K>0$ is such that $|\Psi(|u_n|)|\leq K$.\\
	Thus, by dominated convergence theorem, $\int_\Omega\omega_{1} \widetilde{\Psi}(|\psi(|u_n|)-\psi(|u|)|)\,dx \to 0$ and hence $\|\psi(|u_n|)-\psi(|u|)\|_{L_{\omega_{1}}^{\widetilde{\Psi}}(\Omega)} \to 0$ as $n\to\infty$.
	Therefore, From (\ref{J1N}) and (\ref{J2N}) one can deduce that $\|J'(u_{n})-J'(u)\|_{(W_{0,\omega,\omega_{1}}^{1}L^{\Phi,\Psi}\left(\Omega\right))^{'}}\rightarrow0$ as required. Hence, the condition ($h_{3}$) of Theorem \ref{B1} hold true. Next, by the proof of the Proposition \ref{homeomor} the condition ($h_{2}$) verified. Moreover, for any $u\in W_{0,\omega,\omega_{1}}^{1}L^{\Phi,\Psi}\left(\Omega\right)\setminus\{0\}$,
	$$
	\langle I'(u),u\rangle>0,\qquad \displaystyle\lim_{t\to+\infty}I(tu)=+\infty,\qquad \displaystyle\inf_{u\in M_\alpha}\langle I'(u),u\rangle>0.
	$$
	Hence, there exist a sequence of positive numbers $\{\mu_{k,\alpha}\}_{k\in\mathbb{N}}$ tending to 0 and a corresponding sequence of functions $\{u_{k,\alpha}\}_{k\in\mathbb{N}}\in W_{0,\omega,\omega_{1}}^{1}L^{\Phi,\Psi}\left(\Omega\right)$ such that
	$$
	\left\langle-\operatorname{div}\left(\omega(x)\frac{\varphi\left(\nabla u_{k,\alpha}\right)}{\left|\nabla u_{k,\alpha} \right| }\nabla u_{k,\alpha}\right), v\right\rangle=\frac{1}{\mu_{k,\alpha}}\int_{\Omega} \omega_{1}(x)\psi(|u_{k,\alpha}|)\frac{u_{k,\alpha}}{|u_{k,\alpha}|}v\, dx,
	$$
	$\forall v\in W_{0,\omega,\omega_{1}}^{1}L^{\Phi,\Psi}\left(\Omega\right)$.
	Moreover, $J(u_{k,\alpha})=\alpha$ and
	$$
	I(u_{k,\alpha}):=c_{k,\alpha}=\sup_{K\in \mathcal{C}_k}\inf_{u\in K}I(u)>0.
	$$
	Consequently, $\lambda_{k,\alpha}=1/\mu_{k,\alpha}$ is an eigenvalue of (\ref{mainproblem0}) with eigenfunction $u_{k,\alpha}$. The proof is complete.
\end{proof}
\begin{Remark}\label{remark}
	Since $J(u_{k,\alpha})=\alpha$, we obtain
	\begin{equation*}
		\int_{\Omega}\omega_{1}\Psi(\left|u_{k,\alpha}\right|)dx<\infty.
	\end{equation*}
	Thus, $u_{k,\alpha}\in L_{\omega_{1}}^{\Psi}(\Omega)$. Hence, The Problem (\ref{mainproblem0}) can be studied with an analogue way in $W^{1}_{0,\omega}L^{\Phi}(\Omega)$.
\end{Remark}
\section{Eigenvalues Problem without $\Delta_{2}$}
In this part of paper, neither $\Phi$ nor its conjugate function $\tilde{\Phi}$ satisfy the $\Delta_{2}$-condition and we assum that $\Psi\prec\prec\Phi$. Then, $I$ and $J$ are not $C^{1}$ in general.
\begin{Lemma}\label{weakclosed}
	The set $M_{\alpha}$ is sequentially weak* closed.
\end{Lemma}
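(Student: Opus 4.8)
The plan is to take an arbitrary sequence $\{u_n\}_n\subset M_\alpha$ with $u_n\rightharpoonup^* u$ in $W_{0,\omega}^{1}L^{\Phi}\left(\Omega\right)$ and to show that the weak* limit still satisfies $J(u)=\alpha$. Since $W_{0,\omega}^{1}L^{\Phi}\left(\Omega\right)$ is a weak* closed subset of the dual of a separable space, the limit $u$ automatically belongs to the space, and by Banach--Steinhaus the sequence $\{u_n\}_n$ is bounded. The whole problem therefore reduces to passing to the limit in the nonlinear modular
$$
J(u_n)=\int_{\Omega}\omega_{1}(x)\Psi(|u_n|)\,dx,
$$
the delicate point being that in this section neither $\Phi$ nor $\Psi$ is assumed to satisfy the $\Delta_2$-condition, so $J$ is not continuous for the norm topology and one cannot simply invoke continuity of $J$.

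The first step I would carry out is to produce strong, and then pointwise, convergence. Because $\Psi\prec\prec\Phi$ (hence $\Psi\prec\prec\Phi_N$, as $\Phi\prec\prec\Phi_N$), the compact embedding of Proposition \ref{embe} gives $W_{0,\omega}^{1}L^{\Phi}\left(\Omega\right)\hookrightarrow\hookrightarrow L_{\omega_{1}}^{\Psi}\left(\Omega\right)$, so up to a subsequence $u_n\to u$ strongly in $L_{\omega_{1}}^{\Psi}\left(\Omega\right)$ and, extracting once more, $u_n\to u$ a.e. in $\Omega$. By continuity of $\Psi$ this yields $\omega_{1}\Psi(|u_n|)\to\omega_{1}\Psi(|u|)$ a.e. in $\Omega$, which already gives $J(u)\le\liminf_n J(u_n)=\alpha$ by Fatou; the nontrivial matter is the reverse inequality.

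The second and main step is to upgrade a.e. convergence to convergence of the integrals, i.e. to establish the equi-integrability of the family $\{\omega_{1}\Psi(|u_n|)\}_n$. Here I would use Lemma \ref{algebric1}: applied to each $u_n$ with $\delta=2$ and using the uniform bound $\|u_n\|_{L_{\omega_{1}}^{\Psi_*}\left(\Omega\right)}\le K$ coming from boundedness of $\{u_n\}_n$, estimate \eqref{*} furnishes a constant independent of $n$ with $\sup_n\int_{\Omega}\omega_{1}\Psi(2|u_n|)\,dx<\infty$. This uniform control of the modular \emph{at the dilated argument} is exactly what replaces the missing $\Delta_2$-condition: on $\{|u_n|>R\}$ one has $\Psi(|u_n|)\le c_R^{-1}\Psi(2|u_n|)$ with $c_R:=\inf_{t>R}\Psi(2t)/\Psi(t)\to\infty$, so the tails $\int_{\{|u_n|>R\}}\omega_{1}\Psi(|u_n|)\,dx\le c_R^{-1}\sup_n\int_{\Omega}\omega_{1}\Psi(2|u_n|)\,dx$ are uniformly small, while on $\{|u_n|\le R\}$ one controls $\int_E\omega_{1}\Psi(|u_n|)\,dx\le\Psi(R)\int_E\omega_{1}\,dx$, which is small on sets $E$ of small measure since $\omega_{1}\in L^{\widetilde{\Phi}}(\Omega)$ and $\Omega$ is bounded. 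This is precisely the de la Vallée--Poussin mechanism, and it yields equi-integrability of $\{\omega_{1}\Psi(|u_n|)\}_n$. Combining it with the a.e. convergence, Vitali's convergence theorem gives $J(u_n)\to J(u)$; since $J(u_n)=\alpha$ for all $n$, we conclude $J(u)=\alpha$, hence $u\in M_\alpha$ and $M_\alpha$ is sequentially weak* closed.

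I expect the equi-integrability step to be the main obstacle, as it is exactly the place where the absence of the $\Delta_2$-condition would normally break norm-continuity of $J$; the role of $\Delta_2$ is played instead by the strict growth comparison $\Psi\prec\prec\Phi$ of Definition \ref{definition22}, transmitted through the uniform bound of Lemma \ref{algebric1}. (In the special case $\Psi\in\Delta_2$ the argument is even shorter, since then $J$ is continuous for the norm of $L_{\omega_1}^{\Psi}(\Omega)$ and strong convergence alone gives $J(u_n)\to J(u)$.)
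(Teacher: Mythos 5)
Your overall strategy (compact embedding $\Rightarrow$ a.e.\ convergence, then Vitali via equi-integrability of $\{\omega_{1}\Psi(|u_n|)\}_n$) is reasonable, but the key step as you justify it fails. You claim that $c_R:=\inf_{t>R}\Psi(2t)/\Psi(t)\to\infty$ as $R\to\infty$, and you use $c_R^{-1}\to 0$ to make the tails $\int_{\{|u_n|>R\}}\omega_{1}\Psi(|u_n|)\,dx$ uniformly small. This is false in general: convexity only gives $\Psi(2t)\ge 2\Psi(t)$, so $c_R\ge 2$, and for $\Psi(t)=t^{p}$ one has $c_R=2^{p}$ for every $R$, so $c_R$ need not tend to infinity at all (nor does the absence of $\Delta_2$ help you; $\Delta_2$ bounds the ratio from above, it says nothing about the infimum blowing up). With only $c_R\ge 2$ your tail bound reads $\int_{\{|u_n|>R\}}\omega_{1}\Psi(|u_n|)\,dx\le \tfrac12\sup_n\int_\Omega\omega_{1}\Psi(2|u_n|)\,dx$, a fixed constant, so the de la Vall\'ee--Poussin mechanism does not close and equi-integrability is not established. (Your route could be repaired differently: write $\Psi(|u_n|)\le\tfrac12\Psi(2|u|)+\tfrac12\Psi(2|u_n-u|)$ and use that $\|u_n-u\|_{L^{\Psi}_{\omega_1}(\Omega)}\to0$ forces $\int_\Omega\omega_{1}\Psi(\lambda|u_n-u|)\,dx\to0$ for each fixed $\lambda$ by convexity of the modular; but that is not the argument you gave.)

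The paper takes a different and more direct route that sidesteps equi-integrability of the sequence entirely. It estimates the difference of the modulars by monotonicity of $\psi$,
\begin{equation*}
\left|\Psi(|u_n|)-\Psi(|u|)\right|\le\psi\left(|u|+|u_n-u|\right)\,|u_n-u|,
\end{equation*}
and then applies the H\"older inequality \eqref{weithedh} to get
\begin{equation*}
|J(u_n)-J(u)|\le 2\left\|\psi\left(|u|+|u_n-u|\right)\right\|_{L_{\omega_{1}}^{\widetilde{\Psi}}(\Omega)}\left\|u_n-u\right\|_{L_{\omega_{1}}^{\Psi}(\Omega)}.
\end{equation*}
The second factor tends to zero by the compact embedding; the first is shown to be uniformly bounded using \eqref{messt2} to pass from $\widetilde{\Psi}(\psi(\cdot))$ to $\Psi(2\,\cdot)$, a convexity split of $\Psi(2(|u|+|u_n-u|))$, and Lemma \ref{algebric1}. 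This ``bounded times vanishing'' structure is exactly what replaces the missing $\Delta_2$-condition, and it is the idea your proposal is missing.
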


\begin{proof}
	Let $\left\lbrace u_{n} \right\rbrace_{k}\subset M_{\alpha}$ be a sequence such that $u_{n}\rightharpoonup^*u$. We show that $J(u)=\alpha$. Since $\psi$ is increasing and by the H$\ddot{o}$lder inequality (\ref{weithedh}), we obtain
	\begin{equation*}
		\begin{aligned}
			\left|J(u_{n})-J(u)\right|&\leq\int_{\Omega}\omega_{1}(x)\left|\Psi\left(\left|u_{n}(x)\right|\right)-\Psi\left(\left|u(x)\right|\right)\right|dx\\
			&\leq\int_{\Omega}\omega_{1}(x)\left(\int_{\left|u(x)\right|}^{\left|u(x)\right|+\left|u_{n}(x)-u(x)\right|}\psi(s)ds\right)dx\\
			&\leq\int_{\Omega}\omega_{1}(x)\psi\left(\left|u(x)\right|+\left|u_{n}(x)-u(x)\right|\right)\left|u_{n}(x)-u(x)\right|dx\\
			&\leq2\left\|\psi\left(\left|u\right|+\left|u_{n}-u\right|\right) \right\|_{L_{\omega_{1}}^{\widetilde{\Psi}}\left(\Omega\right)} \left\|u_{n}-u \right\|_{L_{\omega_{1}}^{\Psi}\left(\Omega\right) }.
		\end{aligned}
	\end{equation*}
	With this estimation we just need to prove that $\psi\left(\widetilde{u}_{n}\right)$ is uniformly bounded in $L_{\omega_{1}}^{\widetilde{\Psi}}\left(\Omega\right)$, where $\tilde{u}_{n}(x):=\left|u(x)\right|+\left|u_{n}(x)-u(x)\right|$. Indeed, if $\left\|\psi\left(\widetilde{u}_{n}\right) \right\|_{L_{\omega_{1}}^{\widetilde{\Psi}}\left(\Omega\right)}\leq2$, then the proof is acheived. To this end, assume that $\left\|\psi\left(\widetilde{u}_{n}\right) \right\|_{L_{\omega_{1}}^{\widetilde{\Psi}}\left(\Omega\right)}>2$, then since $\widetilde{\Psi}$ is covexe, we get
	\begin{equation*}
		1<\int_{\Omega}\omega_{1}(x)\widetilde{\Psi}\left(\frac{2\psi\left(\widetilde{u}_{n}\right)}{\left\|\psi\left(\widetilde{u}_{n}\right) \right\|_{L_{\omega_{1}}^{\widetilde{\Psi}}\left(\Omega\right)}} \right)dx\leq\frac{2}{\left\|\varphi\left(\widetilde{u}_{n}\right) \right\|_{L_{\omega_{1}}^{\widetilde{\Psi}}\left(\Omega\right)}}\int_{\Omega}\omega_{1}(x)\widetilde{\Psi}\left(\psi\left(\widetilde{u}_{n}\right)\right)dx.
	\end{equation*}
	It follows from (\ref{2.2}) and (\ref{messt2})
	\begin{equation*}
		\begin{aligned}
			\frac{\left\|\psi\left(\widetilde{u}_{n}\right) \right\|_{L_{\omega_{1}}^{\widetilde{\Psi}}\left(\Omega\right)}}{2}&\leq\int_{\Omega}\omega_{1}(x)\widetilde{\Psi}\left(\psi\left(\widetilde{u}_{n}\right)\right)dx\\
			&\leq\int_{\Omega}\omega_{1}(x)\left(\int_{0}^{\phi\left(\widetilde{u}_{n}\right)}\psi^{-1}(s)ds\right) dx\\
			&\leq\int_{\Omega}\omega_{1}(x)\widetilde{u}_{n}\psi\left(\widetilde{u}_{n}\right)dx\\
			&\leq\int_{\Omega}\omega_{1}(x)\Psi\left(2\widetilde{u}_{n}\right) dx.
		\end{aligned}
	\end{equation*}
	Moreover, since $\Psi$ is convexe we have for all $\epsilon>0$
	\begin{small}
		\begin{equation*}
			\int_{\Omega}\omega_{1}(x)\Psi\left(2\widetilde{u}_{n}\right) dx\leq\frac{1}{1+\epsilon}\int_{\Omega}\omega_{1}(x)\Psi\left(2\left(1+\epsilon\right) \left|u\right|\right)dx+\frac{\epsilon}{1+\epsilon}\int_{\Omega}\omega_{1}(x)\Psi\left(2\frac{1+\epsilon}{\epsilon}\left|u_{n}-u\right|\right)dx.
		\end{equation*}
	\end{small}By Lemma \ref{algebric1}, we have $2\left(1+\epsilon\right) \left|u\right|\in K_{\omega}^{\Psi}(\Omega)$ and $2\frac{(1+\epsilon)}{\epsilon}\left|u_{n}-u\right|\in K_{\omega}^{\Psi}(\Omega)$, then $\psi\left(\widetilde{u}_{n}\right)$ is uniformly bounded in $L_{\omega_{1}}^{\widetilde{\Psi}}\left(\Omega\right)$. Thus, $I(u_{n})\longrightarrow I(u)$. Hence, $I(u)=\alpha$ and $u\in M_{\alpha}$.
	The proof is complete.
\end{proof}

Now we are in position to prove Theorem \ref{T1}.
\begin{proof}[\textbf{Proof of Theorem \ref{T1}}]
	We assert that $M_{\alpha}$ is not empty since $\Psi$ is increasing. Given $\left\lbrace u_{n}\right\rbrace_{n}\subset M_{\alpha}$, a minimizing sequence for (\ref{minid}), i.e
	\begin{equation*}
		\underset{n\longrightarrow\infty}{\lim}I\left(u_{n}\right)=\Lambda_{\alpha}.
	\end{equation*}
	Since $I$ is coercive, then $\left\lbrace u_{n}\right\rbrace_{n}$ is bounded in $W_{0,\omega,\omega_{1}}^{1}L^{\Phi,\Psi}\left(\Omega\right)$ which is the dual of a separable Banach space. Thus, up to a subsequence still denoted by $\left\lbrace u_{n}\right\rbrace_{n}$, there exists $u\in W_{0,\omega,\omega_{1}}^{1}L^{\Phi,\Psi}\left(\Omega\right)$ stisfies $u_{n}\rightharpoonup^{*}u$ in $W_{0,\omega,\omega_{1}}^{1}L^{\Phi,\Psi}\left(\Omega\right)$. By Lemma \ref{weakclosed}, we obtain that $u\in M_{\alpha}$ and since $I$ is weak* lower semicontinuous, we get
	\begin{equation}\label{zb}
		I(u)\leq\underset{n\longrightarrow\infty}{\liminf}I(u_{n})=\Lambda_{\alpha}^{\left(\ref{mainproblem0}\right)}.
	\end{equation}
	Finaly, by (\ref{zb}) and the definition of $\Lambda_{\alpha}^{\left(\ref{mainproblem0}\right)}$, we find
	\begin{equation*}\label{2}
		I(u)=\Lambda_{\alpha}^{\left(\ref{mainproblem0}\right)}.
	\end{equation*}
	Hence, $u$ is a solution of minimization problem (\ref{minid}) and can be assumed to be one-signed in $\Omega$, since if u solves
	minimization problem (\ref{minid}), then $\left|u\right|$ do.	
\end{proof}
\begin{Lemma}\label{delta}
	Given $u,v\in E_{\omega_{1}}^{\Psi}(\Omega)$, such that $u\neq0$ and $\int_{\Omega}\omega_{1}(x)\psi\left(\left|u\right|\right)vdx\neq0$. Then
	\begin{equation*}
		\int_{\Omega}\omega_{1}\Psi\left(\left(1-\epsilon\right)\left| u\right| +\delta \left|v\right| \right)dx=\int_{\Omega}\omega_{1}\Psi\left(\left|u\right|\right)
	\end{equation*}
	defines continuously differentiate function $\delta$ of $\epsilon$ in some interval $\left(-\epsilon_{0},\epsilon_{0}\right)$ with $\epsilon_{0}>0$, such that $\delta(0)=0$ and
	\begin{equation*}
		\delta^{'}(0)=\frac{\int_{\Omega}\omega_{1}\psi\left(\left|u \right|\right)udx}{\int_{\Omega}\omega_{1}\psi\left(\left|u \right| \right)vdx}.
	\end{equation*}
\end{Lemma}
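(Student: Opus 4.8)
The plan is to read the defining relation as an implicit equation and to apply the implicit function theorem to the scalar function of two real variables
\[
F(\epsilon,\delta):=\int_{\Omega}\omega_{1}(x)\Psi\big((1-\epsilon)|u|+\delta|v|\big)\,dx-\int_{\Omega}\omega_{1}(x)\Psi(|u|)\,dx .
\]
As in Lemma \ref{algebric3} I may assume $u,v\ge 0$ a.e. in $\Omega$. First I would check that $F$ is well defined and finite on a neighbourhood of the origin: by Lemma \ref{algebric1} each of $(1-\epsilon)u$ and $\delta v$ belongs to $K^{\Psi}_{\omega_{1}}(\Omega)$ for all small $\epsilon,\delta$, and the convexity argument of Lemma \ref{algebric3}(b) shows that the combination $(1-\epsilon)u+\delta v$ remains in $K^{\Psi}_{\omega_{1}}(\Omega)$; hence both integrals converge for $(\epsilon,\delta)$ in a small box around $(0,0)$, and clearly $F(0,0)=0$.

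Next I would establish that $F\in C^{1}$ near the origin and compute its partial derivatives by differentiating under the integral sign,
\[
\partial_{\epsilon}F=-\int_{\Omega}\omega_{1}\,\psi\big((1-\epsilon)u+\delta v\big)\,u\,dx,\qquad \partial_{\delta}F=\int_{\Omega}\omega_{1}\,\psi\big((1-\epsilon)u+\delta v\big)\,v\,dx .
\]
The justification rests on the earlier technical lemmas: Lemma \ref{algebric3} guarantees that $\psi\big((1-\epsilon)u+\delta v\big)\in L^{\widetilde{\Psi}}_{\omega_{1}}(\Omega)$, so that by the weighted H\"{o}lder inequality \eqref{weithedh} the products $\psi(\cdots)\,u$ and $\psi(\cdots)\,v$ are integrable. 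To pass the derivative inside the integral I would bound the difference quotients using the monotonicity of $\psi$ together with the Young-type estimate \eqref{messt2}, producing an integrable dominating function, and then invoke the dominated convergence theorem; continuity of the two partials in $(\epsilon,\delta)$ follows in the same way.

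Evaluating at the origin gives $\partial_{\epsilon}F(0,0)=-\int_{\Omega}\omega_{1}\psi(|u|)u\,dx$ and $\partial_{\delta}F(0,0)=\int_{\Omega}\omega_{1}\psi(|u|)v\,dx$. The hypothesis $\int_{\Omega}\omega_{1}\psi(|u|)v\,dx\neq 0$ is exactly the nonvanishing of $\partial_{\delta}F(0,0)$, so the implicit function theorem produces a $C^{1}$ map $\epsilon\mapsto\delta(\epsilon)$ on some interval $(-\epsilon_{0},\epsilon_{0})$ with $F(\epsilon,\delta(\epsilon))=0$ and $\delta(0)=0$, satisfying
\[
\delta'(0)=-\frac{\partial_{\epsilon}F(0,0)}{\partial_{\delta}F(0,0)}=\frac{\int_{\Omega}\omega_{1}\psi(|u|)u\,dx}{\int_{\Omega}\omega_{1}\psi(|u|)v\,dx},
\]
which is the claimed formula.

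The main obstacle I anticipate is the analytic legitimacy of differentiating $F$ under the integral in this non-reflexive Orlicz framework, namely exhibiting a single integrable majorant for the difference quotients uniformly in $(\epsilon,\delta)$ near the origin. I expect to handle this precisely by combining Lemma \ref{algebric3} (to keep $\psi$ of the argument in $L^{\widetilde{\Psi}}_{\omega_{1}}(\Omega)$) with the inequality \eqref{messt2} and the weighted H\"{o}lder inequality \eqref{weithedh}; everything else is a routine application of the implicit function theorem.
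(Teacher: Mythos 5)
Your proposal is correct and follows essentially the same route as the paper: both set up the two-variable function $F(\epsilon,\delta)$, establish that it is $C^{1}$ near the origin by dominating the integrand and its difference quotients via the monotonicity of $\psi$ and the estimate $t\psi(t)\leq\Psi(2t)$ together with Lemma \ref{algebric1} (the paper dominates directly by $\omega_{1}\Psi(4|u|+2|v|)\in L^{1}(\Omega)$ rather than routing through H\"{o}lder, a negligible difference), and then conclude with the implicit function theorem using $\partial_{\delta}F(0,0)=\int_{\Omega}\omega_{1}\psi(|u|)v\,dx\neq0$.
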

\begin{proof}[\textbf{Proof}]
	Denote $D=(-1,1) \times(-1,1) \subset \mathbb{R}^2$ and define $g: D \rightarrow \mathbb{R}$ by
	$$
	g(\epsilon, \delta)=\int_{\Omega}\omega_{1} \Psi((1-\epsilon) \left|u \right|+\delta v)-\int_{\Omega} \omega_{1}\Psi(\left|u \right|) .
	$$
	By Lemma (\ref{algebric1}), since $2|u|+|v|\in E_{\omega_{1}}^{\Psi}(\Omega)$, then $2|u|+|v|\in K_{\omega_{1}}^{\Psi}(\Omega)$. Thus,
	$$
	\omega_{1}\Psi((1-\epsilon) \left|u \right|+\delta \left|v\right| ) \leq \omega_{1}\Psi(2|u|+|v|) \in L^1(\Omega) \text { for all }(\epsilon, \delta) \in D.
	$$
	Therefore, the dominated convergence theorem implies that $g$ is continous in $D$. Moreover, for all $(\epsilon, \delta) \in D$
	\begin{equation}\label{partialepsi}
		\left|\frac{\partial}{\partial \epsilon} \Psi\left((1-\epsilon) \left| u\right| +\delta \left|v\right| \right)\right|=\left|-\psi((1-\epsilon) u+\delta v) u\right| \leq \psi\left(2|u|+|v|\right)|u|
	\end{equation}
	By inequality (\ref{messt}), we obtain that
	\begin{equation*}
		\omega_{1}\psi(2|u|+|v|)|u|\leq \omega_{1}\psi\left(2|u|+|v|\right)\left(2|u|+|v|\right)\leq\omega_{1}\Psi\left(4|u|+2|v|\right).
	\end{equation*}
	Hence, by Lemma (\ref{algebric1}), that $4|u|+2|v|\in K_{\omega_{1}}^{\Psi}(\Omega)$. Thus, by (\ref{partialepsi}) we have
	\begin{equation*}
		\left|\omega_{1}\frac{\partial}{\partial \epsilon} \Psi\left((1-\epsilon) \left| u\right| +\delta \left| v\right| \right)\right|=\left|-\omega_{1}\psi((1-\epsilon) \left| u\right| +\delta \left| v\right| ) u\right| \leq \omega_{1}\psi\left(2|u|+|v|\right)|u|\in L^1(\Omega).
	\end{equation*}
	Similarly, we get for all $(\epsilon, \delta) \in D$
	\begin{equation*}
		\left|\omega_{1}\frac{\partial}{\partial \delta} \Psi((1-\epsilon) \left| u\right| +\delta \left| v\right| )\right|=|\omega_{1}\psi((1-\epsilon) u+\delta v) v| \leq \omega_{1}\psi(2|u|+|v|)|v| \in L^1(\Omega).
	\end{equation*}
	Compute $\frac{\partial g}{\partial \epsilon}(\epsilon, \delta)$ and $\frac{\partial g}{\partial \delta}(\epsilon, \delta)$, we get
	$$
	\begin{aligned}
		& \frac{\partial g}{\partial \epsilon}(\epsilon, \delta)=-\int_{\Omega}\omega_{1} \psi((1-\epsilon) \left|u\right|+\delta \left|v\right|)\left| u\right| , \\
		& \frac{\partial g}{\partial \delta}(\epsilon, \delta)=\int_{\Omega} \omega_{1}\psi((1-\epsilon) \left|u\right|+\delta \left|v\right|) \left| v\right|  .
	\end{aligned}
	$$
	Again, by the dominated convergence theorem, $g \in C^1(D)$. Finaly, since $g(0,0)=0$ and $\frac{\partial g}{\partial \delta}(0,0)=\int_{\Omega}\omega_{1}\psi(\left|u\right| )v \neq 0$, then by the implicit function theorem there exists a continuously differentiable function $\delta$ defined in some interval $\left(-\epsilon_0,\epsilon_0\right)$ satisfying
	\begin{equation}
		g(\epsilon, \delta(\epsilon))=0 \quad \text { for all }-\epsilon_0<\epsilon<\epsilon_0
	\end{equation}
	and
	\begin{equation*}
		\delta^{\prime}(0)=-\frac{\frac{\partial g}{\partial \epsilon}(0,0)}{\frac{\partial g}{\partial \delta}(0,0)}=\frac{\int_{\Omega} \omega_{1}\psi(\left| u\right| ) \left| u\right| }{\int_{\Omega} \omega_{1}\psi(\left| u\right| ) \left| v\right| }.
	\end{equation*}
\end{proof}
The proof is complete.
\begin{Lemma}\label{algebric4}
	If $u_{\alpha}\in W_{0,\omega,\omega_{1}}^{1}L^{\Phi,\Psi}\left(\Omega\right)$ solves the minimisation Problem (\ref{minid}), then $\varphi(\nabla u_{\alpha})\in L_{\omega}^{\tilde{\Phi}}(\Omega)$.
\end{Lemma}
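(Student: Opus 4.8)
The plan is to reduce the statement to the single integrability fact
\[
\int_\Omega \omega(x)\,|\nabla u_\alpha|\,\varphi(|\nabla u_\alpha|)\,dx<\infty .
\]
Indeed, by the first inequality in \eqref{messt2}, $\widetilde\Phi(\varphi(t))\le t\varphi(t)$, this bound immediately gives $\int_\Omega\omega\,\widetilde\Phi(\varphi(|\nabla u_\alpha|))\,dx<\infty$, i.e. $\varphi(\nabla u_\alpha)\in K^{\widetilde\Phi}_\omega(\Omega)\subset L^{\widetilde\Phi}_\omega(\Omega)$. Since the $\Delta_2$-condition is unavailable, the mere finiteness of $I(u_\alpha)$ does \emph{not} control the displayed integral (this is exactly the non-$\Delta_2$ obstruction), so the minimality of $u_\alpha$ must be used through a carefully chosen admissible variation.

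Write $u:=u_\alpha$, which may be taken $u\ge 0$, $u\not\equiv 0$. First I would fix a direction $v\in C_c^\infty(\Omega)$, $v\ge 0$, and rescale it so that $\int_\Omega\omega_1\psi(u)v\,dx>\int_\Omega\omega_1\psi(u)u\,dx$; both integrals are finite because $u\in\WW$ forces $2u\in K^{\Psi}_{\omega_1}(\Omega)$ (Lemma \ref{algebric1}) and $\psi(u)\in K^{\widetilde\Psi}_{\omega_1}(\Omega)$ by the preceding lemma. Lemma \ref{delta} then yields a $C^1$ function $\delta(\epsilon)$ with $\delta(0)=0$ and $0<\delta'(0)=\big(\int\omega_1\psi(u)u\big)\big/\big(\int\omega_1\psi(u)v\big)<1$, such that $w_\epsilon:=(1-\epsilon)u+\delta(\epsilon)v\in M_\alpha$ for small $\epsilon>0$; note $w_\epsilon\ge 0$ and $w_\epsilon\in\WW$, so minimality gives $I(u)\le I(w_\epsilon)$.

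Next I would use the convexity of $\xi\mapsto\Phi(|\xi|)$ via its subgradient inequality at $\nabla w_\epsilon$, integrate against $\omega$, and combine with $I(u)\le I(w_\epsilon)$. Using $\nabla u-\nabla w_\epsilon=(1-\epsilon)^{-1}(\epsilon\nabla w_\epsilon-\delta(\epsilon)\nabla v)$ and $\tfrac{\nabla w_\epsilon\cdot\nabla v}{|\nabla w_\epsilon|}\le|\nabla v|$, the resulting inequality collapses to
\[
A_\epsilon:=\int_\Omega\omega\,\varphi(|\nabla w_\epsilon|)|\nabla w_\epsilon|\,dx\ \le\ \beta_\epsilon\int_\Omega\omega\,\varphi(|\nabla w_\epsilon|)|\nabla v|\,dx,\qquad \beta_\epsilon:=\tfrac{\delta(\epsilon)}{\epsilon}.
\]
A Young inequality together with $\widetilde\Phi(\varphi(t))\le t\varphi(t)$ bounds the right-hand side by $\beta_\epsilon(A_\epsilon+I(v))$, so that $A_\epsilon\le\frac{\beta_\epsilon}{1-\beta_\epsilon}I(v)$ provided $A_\epsilon<\infty$ and $\beta_\epsilon<1$; the latter holds for small $\epsilon$ since $\beta_\epsilon\to\delta'(0)<1$, and $I(v)<\infty$ as $v$ is smooth and compactly supported.

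The main obstacle is the a priori finiteness $A_\epsilon<\infty$ for fixed $\epsilon$, since this is again the non-$\Delta_2$ difficulty; the key is that $w_\epsilon$ has gradient essentially $(1-\epsilon)\nabla u$. From $|\nabla w_\epsilon|\le(1-\epsilon)|\nabla u|+\delta(\epsilon)\|\nabla v\|_\infty$ and the convexity estimate $\Phi(|\nabla u|)\ge\varphi(|\nabla w_\epsilon|)\big(|\nabla u|-|\nabla w_\epsilon|\big)$, on the set $\{|\nabla u|\ge 2\delta(\epsilon)\|\nabla v\|_\infty/\epsilon\}$ one gets $|\nabla w_\epsilon|\le(1-\tfrac\epsilon2)|\nabla u|$ and hence $\varphi(|\nabla w_\epsilon|)|\nabla w_\epsilon|\le\tfrac2\epsilon\Phi(|\nabla u|)$, which is $\omega$-integrable because $I(u)<\infty$; on the complementary set $|\nabla w_\epsilon|$ is bounded and $\int_\Omega\omega\,dx<\infty$ (as $\omega\in L^{\widetilde\Psi}(\Omega)\subset L^1(\Omega)$ on the bounded $\Omega$). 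This yields $A_\epsilon<\infty$. Finally, since $\nabla w_\epsilon\to\nabla u$ a.e. as $\epsilon\to 0^+$, Fatou's lemma applied to the uniform bound $A_\epsilon\le\frac{\beta_\epsilon}{1-\beta_\epsilon}I(v)$ gives $\int_\Omega\omega\,|\nabla u|\varphi(|\nabla u|)\,dx\le\frac{\delta'(0)}{1-\delta'(0)}I(v)<\infty$, which closes the argument and proves $\varphi(\nabla u_\alpha)\in L^{\widetilde\Phi}_\omega(\Omega)$.
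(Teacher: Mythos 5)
Your proposal is correct and follows essentially the same route as the paper, which for this lemma simply defers to Proposition 4.3 of Mustonen--Tienari: use the admissible perturbation $w_\epsilon=(1-\epsilon)u_\alpha+\delta(\epsilon)v$ from Lemma \ref{delta}, exploit minimality and convexity to bound $\int_\Omega\omega\,\varphi(|\nabla w_\epsilon|)|\nabla w_\epsilon|\,dx$, absorb via Young's inequality and $\widetilde\Phi(\varphi(t))\le t\varphi(t)$, and pass to the limit by Fatou. In fact your write-up supplies details the paper leaves implicit (rescaling $v$ so that $\delta'(0)<1$, and the a priori finiteness of $A_\epsilon$ needed to justify the absorption), so it is, if anything, more complete than the paper's own argument.
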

\begin{proof}[\textbf{Proof}]
	The proof run analogously to the proof of Proposition 4.3 in \cite{mustonen1999eigenvalue} replacing the function $\Psi$ in \cite{mustonen1999eigenvalue}[Proposition 4.3] by a function
	\begin{equation*}
		Z(\epsilon)=\int_{\Omega}\omega\Phi\left(\left(1-\epsilon\right)\left|\nabla u\right|+\delta(\epsilon)\left|\nabla u\right|\right)
	\end{equation*}
	where $\delta$ is the continuously differentiate function apeared in Lemma \ref{delta}.
\end{proof}
Now we are in position to prove Theorem \ref{T2}.
\begin{proof}[\textbf{Proof of Theorem \ref{T2}}]
	Define functionals $DJ$ and $DI$ by
	\begin{equation*}
		\langle DJ,v\rangle:=\int_{\Omega}\omega(x)\frac{\varphi\left(\nabla u_{\alpha}\right)}{\left|\nabla u_{\alpha}\right| }\nabla u_{\alpha}\nabla  vdx
	\end{equation*}
	\begin{equation*}
		\langle DI,v\rangle:=\int_{\Omega}\omega_{1}(x)\psi(\left|u_{\alpha}\right|)\frac{u_{\alpha}}{\left|u_{\alpha}\right|}vdx
	\end{equation*}
	We already know from above that $DI,  DJ \in \left( W^{1}_{0,\omega,\omega_{1}}E^{\Phi,\Psi}(\Omega)\right)^{\prime}$. According to Proposition \ref{KER},we shall prove that $\ker DJ\subset\ker DI$ to prove the existance of $\lambda=\lambda_{\alpha}$ such that for all $v\in W^{1}_{0,\omega,\omega_{1}}E^{\Phi,\Psi}(\Omega)$
	\begin{equation}\label{solution}
		\langle DI,v\rangle=\lambda \langle DJ,v\rangle.
	\end{equation}
	Since $W^{1}_{0,\omega,\omega_{1}}E^{\Phi,\Psi}(\Omega)$ is $\sigma(W^{1}_{0,\omega,\omega_{1}}L^{\Phi,\Psi}(\Omega), \left( W^{1}_{0,\omega,\omega_{1}}E^{\Phi,\Psi}(\Omega)\right)^{\prime})$ dense in $W^{1}_{0,\omega,\omega_{1}}L^{\Phi,\Psi}(\Omega)$, (\ref{solution}) holds also for all $v \in W^{1}_{0,\omega,\omega_{1}}L^{\Phi,\Psi}(\Omega)$. 
	To this end it is sufficient to show that $V_{J}\subset V_{I} $, where $V_{J}$ and $V_{I}$ are defined by
	\begin{equation*}
		V_{I}:=\left\lbrace v\in W_{0,\omega,\omega_{1}}^{1}E^{\Phi,\Psi}\left(\Omega\right);\ \int_{\Omega}\omega(x)\frac{\varphi\left(\nabla u_{\alpha}\right)}{\left|\nabla u_{\alpha}\right| }\nabla u_{\alpha}\nabla  vdx>0 \right\rbrace
	\end{equation*}
	\begin{equation*}
		V_{J}:=\left\lbrace v\in W_{0,\omega,\omega_{1}}^{1}E^{\Phi,\Psi}\left(\Omega\right);\ \int_{\Omega}\omega_{1}(x)\psi(\left|u_{\alpha}\right|)\frac{u_{\alpha}}{\left|u_{\alpha}\right|}vdx>0 \right\rbrace.
	\end{equation*}
	By Lemma \ref{delta}, there exists $\epsilon_{0}>0$ and $\delta\in C^{1}\left(-\epsilon_{0},\epsilon_{0}\right)$ such that for all $\epsilon\in\left( -\epsilon_0,\epsilon_0\right)$, we have
	\begin{equation*}
		\int_{\Omega}\omega_{1}\Psi\left(\left(1-\epsilon\right)\left|u_{\alpha}\right|+\delta v\right)dx=\int_{\Omega}\omega_{1}\Psi\left(\left|u_{\alpha}\right|\right)=\alpha.
	\end{equation*}
	and
	\begin{equation*}
		\delta^{\prime}(0)=\frac{\int_{\Omega} \omega_{1}\psi(\left| u_{\alpha}\right| ) \left| u_{\alpha}\right| }{\int_{\Omega} \omega_{1}\psi(\left| u_{\alpha}\right| ) \left| v\right| },
	\end{equation*}
	which is positive quantities by the definition of $V_{I}$. For this reason we may choose $0<\epsilon_{1}<\epsilon_{0}$, such that
	$$
	\frac{1}{2} \delta^{\prime}(0)<\delta^{\prime}(\epsilon)<2 \delta^{\prime}(0) \quad \text { for all }-\epsilon_1<\epsilon<\epsilon_1 \text {. }
	$$
	Hence, by integration, we get
	$$
	\frac{1}{2} \delta^{\prime}(0)<\frac{\delta(\epsilon)}{\epsilon}<2 \delta^{\prime}(0) \quad \text { for all } 0<\epsilon<\epsilon_1 .
	$$
	Denote $w_\epsilon=(1-\epsilon) u_\alpha+\delta(\epsilon) v$. By the definition of $u_\alpha$ we have
	\begin{equation}\label{4.4}
		\int_{\Omega} \frac{\omega \Phi\left(\left|\nabla w_\epsilon\right|\right)-\omega\Phi\left(\left|\nabla u_\alpha\right|\right)}{\delta(\epsilon)} \geq 0 \quad \text { for all } 0<\epsilon<\epsilon_1 \text {. }
	\end{equation}
	Since $\nabla w_\epsilon \rightarrow \nabla u_\alpha$ a.e. in $\Omega$, when $\epsilon \rightarrow 0^{+}$, direct calculation gives
	$$
	\begin{aligned}
		\frac{\Phi\left(\left|\nabla w_\epsilon\right|\right)-\Phi\left(\left|\nabla u_\alpha\right|\right)}{\delta(\epsilon)} & =\frac{ \Phi\left(\left|\nabla w_\epsilon\right|\right)- \Phi\left(\left|\nabla u_\alpha\right|\right)}{\left|\nabla w_c\right|-\left|\nabla u_\alpha\right|} \frac{\left|\nabla w_\epsilon\right|^2-\left|\nabla u_\alpha\right|^2}{\left(\left|\nabla w_c\right|+\left|\nabla u_\alpha\right|\right) \delta(\epsilon)} \\
		& \rightarrow \varphi\left(\left|\nabla u_\alpha\right|\right)\left(-\frac{1}{\delta^{\prime}(0)}\left|\nabla u_\alpha\right|+\frac{\nabla u_\alpha \cdot \nabla v}{\left|\nabla u_\alpha\right|}\right)
	\end{aligned}
	$$
	a.e. in $\Omega$, when $\epsilon \rightarrow 0^{+}$. Moreover, by monotonicity and the triangle inequality, we have
	$$
	\begin{aligned}
		\left|\frac{\Phi\left(\left|\nabla w_\epsilon\right|\right)-\Phi\left(\left|\nabla u_\alpha\right|\right)}{\delta(\epsilon)}\right| & \leq\left(\varphi\left(\left|\nabla w_\epsilon\right|\right)+\varphi\left(\left|\nabla u_\alpha\right|\right)\right) \frac{\left|\nabla w_\epsilon-\nabla u_\alpha\right|}{\delta(\epsilon)} \\
		& \leq\left(2 \varphi\left(\left|\nabla u_\alpha\right|\right)+\varphi\left(\frac{\delta(\epsilon)}{\epsilon}|\nabla v|\right)\right)\left(\frac{\epsilon}{\delta(\epsilon)}\left|\nabla u_\alpha\right|+|\nabla v|\right) \\
		& \leq\left(2 \varphi\left(\left|\nabla u_\alpha\right|\right)+\varphi\left(2 \delta^{\prime}(0)|\nabla v|\right)\right)\left(\frac{2}{\delta^{\prime}(0)}\left|\nabla u_\alpha\right|+|\nabla v|\right) \\
		& \in L^1(\Omega) .
	\end{aligned}
	$$
	By the dominated convergence theorem,
	$$
	\frac{\Phi\left(\left|\nabla w_\epsilon\right|\right)-\Phi\left(\left|\nabla u_\alpha\right|\right)}{\delta(\epsilon)} \rightarrow-\frac{1}{\delta^{\prime}(0)} \varphi\left(\left|\nabla u_\alpha\right|\right)\left|\nabla u_\alpha\right|+\frac{\varphi\left(\left|\nabla u_\alpha\right|\right)}{\left|\nabla u_\alpha\right|} \nabla u_\alpha \cdot \nabla v
	$$
	in $L^1(\Omega)$ as $\epsilon \rightarrow 0^{+}$. In view of (\ref{4.4}), we conclude
	$$
	\int_{\Omega} \omega\frac{\varphi\left(\left|\nabla u_\alpha\right|\right)}{\left|\nabla u_\alpha\right|} \nabla u_\alpha \cdot \nabla v \geq \frac{1}{\delta^{\prime}(0)} \int_{\Omega} \omega\varphi\left(\left|\nabla u_\alpha\right|\right)\left|\nabla u_\alpha\right|>0
	$$
	implying $v\in V_{I}$. Since $v$ was arbitrary, $V_{J} \subset V_I$ follows. 
\end{proof}     
\begin{proof}[\textbf{Proof of Theorem \ref{T3}}]
	Let $\left\{\lambda_k\right\}_{k \in \mathbb{N}}$ be sequence of eigenvalues of (\ref{mainproblem0}) such that $\lambda_k \rightarrow \lambda$ and let $\left\{u_k\right\}_{k \in \mathbb{N}} \subset W_{0,\omega,\omega_{1}}^{1}L^{\Phi,\Psi}\left(\Omega\right)$ be the corresponding sequence of associated eigenfunctions, i.e.,
	$$
	\int_{\Omega} \omega\varphi\left(\left|\nabla u_k\right|\right) \frac{\nabla u_k}{\left|\nabla u_k\right|} \nabla v d \mu=\lambda_k \int_{\Omega} \omega_{1}\psi\left(\left|u_k\right|\right) \frac{u_k}{\left|u_k\right|} v \quad \forall v \in W_{0,\omega,\omega_{1}}^{1}L^{\Phi,\Psi}\left(\Omega\right) .
	$$
	Arguing as in the proof of Proposition \ref{4.2}, up to a subsequence, there exists $u \in W_{0,\omega,\omega_{1}}^{1}L^{\Phi,\Psi}\left(\Omega\right)$ such that
	$$
	\begin{array}{ll}
		u_k \longrightarrow u & \text { strongly in } L_{\omega_{1}}^\Psi(\Omega), \\
		u_k \longrightarrow u & \text { a.e. in } \mathbb{R}^n .
	\end{array}
	$$
	From the continuity of $t \mapsto \varphi(t) \frac{t}{|t|}$ and $t \mapsto \psi(t) \frac{t}{|t|}$  we deduce that
	$$
	\varphi\left(\left|\nabla u_k\right|\right) \frac{\nabla u_k}{\left|\nabla u_k\right|} \longrightarrow \varphi\left(\left|\nabla u\right|\right) \frac{\nabla u}{\left|\nabla u\right|} \quad \text { a.e. in } \Omega
	$$
	and
	$$
	\psi\left(\left|u_k\right|\right) \frac{u_k}{\left|u_k\right|} \longrightarrow \psi\left(\left| u\right|\right) \frac{ u}{\left| u\right|} \quad \text { a.e. in } \Omega
	$$
	Hence, taking limit as $k \rightarrow \infty$ in (5.1) we obtain that
	$$
	\int_{\Omega} \omega\varphi\left(\left|\nabla u\right|\right) \frac{\nabla u}{\left|\nabla u\right|} \nabla v d \mu=\lambda \int_{\Omega}\omega_{1} \psi(|u|) \frac{u}{|u|} v \quad \text { for all } v \in W_{0,\omega,\omega_{1}}^{1}L^{\Phi,\Psi}\left(\Omega\right).
	$$
	The proof is complete.
\end{proof}
\begin{Remark}
	The Problem (\ref{mainproblem0}) can be studied in $W^{1}_{0,\omega}L^{\Phi}(\Omega)$ for the same reason of Remark \ref{remark}	
\end{Remark}

\appendix
\section{An abstract existence result}

\begin{Proposition} \cite{gasinski2005nonsmooth}\label{compact}
	If $X$ is a reflexive Banach space, $Y$ is a Banach space, $Z\subset X$ is nonempty, closed and convex and $J:Z\rightarrow Y$ is completely continuous, then $J$ is compact.
\end{Proposition}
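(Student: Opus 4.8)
The plan is to unwind the two definitions and reduce everything to the sequential characterization of compactness together with the weak sequential compactness furnished by reflexivity. Recall that $J$ being \emph{completely continuous} means that whenever $u_n\rightharpoonup u$ weakly in $X$ with $u_n,u\in Z$, one has $J(u_n)\to J(u)$ strongly in $Y$; while $J$ being \emph{compact} means that $J$ maps bounded subsets of $Z$ onto relatively compact subsets of $Y$. So it suffices to fix a bounded set $A\subset Z$ and show that every sequence drawn from $J(A)$ admits a strongly convergent subsequence in $Y$.

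First I would take an arbitrary bounded sequence $\{u_n\}_n\subset A$. Since $X$ is reflexive and $\{u_n\}_n$ is bounded, the Eberlein--\v{S}mulian theorem provides a subsequence, still denoted $\{u_n\}_n$, and an element $u\in X$ with $u_n\rightharpoonup u$ weakly in $X$. This is precisely the point at which the reflexivity hypothesis is consumed: it is what turns boundedness into weak precompactness.

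The key step is to verify that the weak limit $u$ remains inside $Z$. Here I would invoke Mazur's theorem, which asserts that a convex set that is strongly closed is also weakly closed. Since $Z$ is nonempty, closed and convex, it is therefore weakly sequentially closed, and consequently $u\in Z$. Now $u_n,u\in Z$ with $u_n\rightharpoonup u$, so the complete continuity of $J$ yields $J(u_n)\to J(u)$ strongly in $Y$. Thus every sequence in $J(A)$ has a strongly convergent subsequence, whence $J(A)$ is relatively compact; as $A$ was an arbitrary bounded subset of $Z$, the map $J$ is compact.

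I do not expect any serious obstacle. The only delicate point is the passage from strong to weak closedness of $Z$, which is exactly where convexity enters (via Mazur) and cannot be dispensed with, while reflexivity is needed only to extract the weakly convergent subsequence. Everything else is a direct translation between the sequential forms of the two notions involved.
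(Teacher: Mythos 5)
Your argument is correct and is the standard proof of this fact: reflexivity plus Eberlein--\v{S}mulian gives a weakly convergent subsequence from any bounded sequence in $Z$, Mazur's theorem keeps the weak limit in the closed convex set $Z$, and complete continuity then converts weak convergence into strong convergence of the images, which is exactly relative compactness of $J(A)$ in the metric space $Y$. The paper itself offers no proof here --- it simply cites the result from Gasi\'nski--Papageorgiou --- so there is nothing to compare against; your write-up supplies precisely the canonical argument that the cited reference contains.
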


\begin{Theorem}\cite[Theorem 3.6]{bonano}\label{bonano}
	Let X be a reflexive real Banach space and $I:X\rightarrow\mathbb{R}$ a coercive, continuously Gateaux differentiable and sequentially weakly lower semicontinuous functional whose Gateaux derivative admits a continuous inverse on $X$.
	Let $J:X\rightarrow \mathbb{R}$ be a continuously Gateaux differentiable functional whose Gateaux derivative is compact such that
	$$(a_0)\quad\inf_{x\in X}I(x)=I(0)=J(0)=0.$$
	Assume that there exist $r > 0$ and $\overline{x}\in X$, with $r < I(\overline{x})$, such that:
	$$(a_1)\quad\frac{\displaystyle\sup_{I(x)\leq r}J(x)}{r}<\frac{J(\overline{x})}{I(\overline{x})};$$
	$$(a_2)\quad
	\mbox{for each}\;
	\lambda\in\Lambda_r:=\Big(\frac{I(\overline{x})}{J(\overline{x})}, \frac{r}{\displaystyle\sup_{I(x)\leq r}J(x)}\Big),
	\mbox{the functional}\;
	I-\lambda J
	\;
	\hbox{is coercive}.$$
	Then for each $\lambda\in\Lambda_r$, the functional $I-\lambda J$ has at least three distinct critical points in $X$.
\end{Theorem}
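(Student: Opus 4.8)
The plan is to turn the existence of three solutions into a statement about the critical points of the energy functional $E_\lambda := I - \lambda J$, since $u$ is a critical point of $E_\lambda$ exactly when $I'(u) = \lambda J'(u)$. First I would assemble the analytic ingredients that make a variational attack work. As $I$ and $J$ are $C^1$, so is $E_\lambda$; moreover $J$ is sequentially weakly continuous (its derivative $J'$ is compact) and $I$ is sequentially weakly lower semicontinuous, so $E_\lambda$ is sequentially weakly lower semicontinuous. I would then verify the Palais--Smale condition: for a $PS$ sequence $\{u_n\}$, coercivity of $E_\lambda$ (hypothesis $(a_2)$, valid for $\lambda\in\Lambda_r$) forces boundedness, hence $u_n\rightharpoonup u$ up to a subsequence; compactness gives $J'(u_n)\to J'(u)$, so $I'(u_n)=E_\lambda'(u_n)+\lambda J'(u_n)\to \lambda J'(u)$ strongly, and since $I'$ has a continuous inverse, $u_n\to (I')^{-1}(\lambda J'(u))$ strongly. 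This is precisely the mechanism used in Proposition \ref{homeomor}.

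Next I would read off the geometry encoded by $\lambda\in\Lambda_r$. Writing $s:=\sup_{I(x)\le r}J(x)$, the upper endpoint $\lambda<r/s$ gives the barrier estimate $E_\lambda\ge r-\lambda s>0$ on the level set $\{I=r\}$, while the lower endpoint $\lambda>I(\overline{x})/J(\overline{x})$ gives $E_\lambda(\overline{x})=I(\overline{x})-\lambda J(\overline{x})<0$ with $I(\overline{x})>r$. The sublevel set $\{I\le r\}$ is weakly compact (weakly closed since $I$ is weakly lower semicontinuous, bounded since $I$ is coercive, in a reflexive space), so $E_\lambda$ attains its minimum over it at some $u_1$. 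Because $E_\lambda(u_1)\le E_\lambda(0)=0<r-\lambda s\le \inf_{\{I=r\}}E_\lambda$, the point $u_1$ lies in the open interior $\{I<r\}$ and is therefore a genuine local minimizer of $E_\lambda$ on all of $X$, with $E_\lambda(u_1)\le 0$.

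For the minimax point I would join $u_1$ to $\overline{x}$: both have energy below $r-\lambda s$, and by continuity of $I$ every path between them meets $\{I=r\}$, so the mountain--pass level $c:=\inf_{\gamma}\max_t E_\lambda(\gamma(t))$ satisfies $c\ge r-\lambda s>0>\max\{E_\lambda(u_1),E_\lambda(\overline{x})\}$. The Palais--Smale condition then furnishes a critical point $u_2$ with $E_\lambda(u_2)=c>0$, which is automatically distinct from $u_1$ by the opposite sign of the energies. For the third point I would invoke coercivity and weak lower semicontinuity to produce a global minimizer $u_0$ with $E_\lambda(u_0)\le E_\lambda(\overline{x})<0<E_\lambda(u_2)$, so $u_0\ne u_2$; if $I(u_0)\ge r$ then $u_0\ne u_1$ as well (since $I(u_1)<r$), and $u_0,u_1,u_2$ are three distinct critical points.

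The step I expect to be the main obstacle is ruling out the degenerate case in which the global minimizer coincides with the interior local minimizer, $u_0=u_1\in\{I<r\}$, which would leave only two critical points in hand. Handling this is exactly where the strict efficiency condition $(a_1)$ and the admissible window $\Lambda_r$ are needed: the barrier estimate $E_\lambda\ge r-\lambda s>0$ on $\{I=r\}$ is what both pushes $u_1$ into the interior and certifies a strictly positive minimax level, and the same separation should be leveraged to exhibit a second minimum on the outer side of $\{I=r\}$ near $\overline{x}$ (or to force $I(u_0)\ge r$). Making this "two distinct minima plus mountain pass" dichotomy airtight, together with a clean verification of $PS$, is where the real work lies.
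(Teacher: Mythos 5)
This statement is not proved in the paper at all: it is quoted verbatim from Bonanno--Marano \cite{bonano} and used as a black box in the proof of Theorem \ref{T0}, so there is no internal argument to compare yours against. Judged on its own terms, your strategy (barrier estimate $\inf_{\{I=r\}}(I-\lambda J)\geq r-\lambda s>0$ from the upper endpoint of $\Lambda_r$, a local minimizer inside $\{I<r\}$, a negative-energy point $\overline{x}$ outside, a mountain pass in between) is the right skeleton and is faithful to the spirit of the original proof, which runs through a local-minimum theorem plus a Pucci--Serrin type argument. The (PS) verification, the weak sequential lower semicontinuity of $I-\lambda J$, and the positivity $J(\overline{x})>0$ needed for $(I-\lambda J)(\overline{x})<0$ are all correctly handled or routine.

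The genuine gap is exactly the one you flag and do not close: producing a \emph{third} critical point when the global minimizer $u_0$ lands in $\{I<r\}$ and coincides with $u_1$, leaving you with only $u_1$ and the mountain-pass point $u_2$. Neither of your two suggested exits works as stated. You cannot ``force $I(u_0)\geq r$'' (nothing in the hypotheses prevents the global minimum from sitting inside $\{I<r\}$), and you cannot obtain the ``second minimum on the outer side'' by direct minimization, because $\{I\geq r\}$ is \emph{not} weakly closed: $I$ is only sequentially weakly lower semicontinuous, so its sublevel sets are weakly closed but its superlevel sets are not, and a weak limit of a minimizing sequence in the outer region may escape into $\{I<r\}$. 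The standard repair is to fix $0<\eta<r-\lambda s$, take the path component $U$ of $\{I-\lambda J<\eta\}$ containing $\overline{x}$ (which is contained in $\{I>r\}$ since every path to $\{I\leq r\}$ must cross $\{I=r\}$ where the energy exceeds $\eta$), and apply Ekeland's variational principle on $\overline{U}$: since $\inf_{\overline{U}}(I-\lambda J)\leq (I-\lambda J)(\overline{x})<0<\eta\leq (I-\lambda J)|_{\partial U}$, Ekeland produces a (PS) sequence in the open set $U$, and the Palais--Smale condition you already verified yields a critical point $y^{*}\in U$ with $I(y^{*})>r$ and negative energy, distinct from both $u_1$ and $u_2$. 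Equivalently, one can quote Pucci--Serrin once two distinct local minima are exhibited. Without one of these devices the argument only guarantees two critical points in the degenerate case, so as written the proof is incomplete.
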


\section{Ljusternik-Schnirelman}
We shall specifically refer to the abstract theorem of the so-called Ljusternik-Schnirelman theory  from \cite[Theorem 9.27]{motreanu2014topological}. See also \cite{browder1965lusternik, motreanu2014topological, zeidler2013nonlinear}.
\begin{Theorem}\cite[Theorem 9.27]{motreanu2014topological}\label{B1}
	Given $\alpha>0$, assume that $A$, $B$ are two functionals defined in a reflexive Banach space $X$, such that
	\begin{itemize}
		\item[($h_1$)] $A$,$B$  are  $C^1(X,\R)$ even functionals  with $A(0)=B(0)=0$ and the level set
		$$
		M_\alpha := \{ u\in X \colon B(u)=\alpha\}
		$$
		is bounded.
		
		\item [($h_2$)] $A'$ is strongly continuous, i.e.,
		$$
		u_n\rightharpoonup u \text{ in } X \implies A'(u_n) \to I'(u).
		$$
		Moreover, for any $u$ in the closure of the convex hull of $M_\alpha$,
		$$
		\langle A'(u),u\rangle=0 \iff A(u)=0 \iff u=0.
		$$
		
		\item [($h_3$)] $B'$ is continuous, bounded and, as $n\to\infty$, it holds that
		$$
		u_n\rightharpoonup u, \quad B'(u_j)\rightharpoonup v, \quad \langle B'(u_n),u_n\rangle\to\langle v,u\rangle \implies u_n\to u \text{ in } X.
		$$
		
		\item [($h_4$)] For every $u \in X \setminus\{0\}$ it holds that
		$$
		\langle B'(u),u\rangle>0,\qquad \displaystyle\lim_{t\to+\infty}B(tu)=+\infty,\qquad \displaystyle\inf_{u\in M_\alpha}\langle B'(u),u\rangle>0.
		$$
	\end{itemize}
	
	Define max-min values
	\begin{equation*}\label{ak}
		c_{k,\alpha}=\begin{cases}
			\sup_{K\in \mathcal{C}_k}\inf_{u\in K}A(u), & \mathcal{C}_k\neq\emptyset , \\
			0, &  \mathcal{C}_k= \emptyset,
		\end{cases}
	\end{equation*}
	where, for any $k\in\mathbb{N}$,
	$$
	\mathcal{C}_k:=\{ K\subset M_\alpha \text{ compact, symmetric with } A(u)>0 \text{ on } K  \text{ and } \gamma(K)\geq k\},
	$$
	and the Krasnoselskii genus of $K$ is defined as
	$$
	\gamma(K):=\inf\{p\in\mathbb{N}\colon \exists h\colon K\to \R^p\setminus\{0\}\ \text{such that}\ h \text{ is continuous and odd} \},
	$$
	see \cite{yu1964topological} for details.
	
	Thus,  $\{c_{k,\alpha}\}_{k\geq1}$ forms a nonincreasing sequence
	$$
	+\infty> c_{1,\alpha}\geq c_{2,\alpha}\geq \ldots\geq c_{k,\alpha}\geq\ldots\geq0.
	$$
	
	Under these considerations, there exists a sequence $\{(\mu_{k,\alpha},u_{k,\alpha})\}_{k\geq1}$ such that
	$$
	\langle A'(u_{k,\alpha}),v\rangle=\mu_{k,\alpha} \langle B'(u_{k,\alpha}),v\rangle \qquad \forall v\in X
	$$
	such that $u_{k,\alpha} \in M_\alpha$, $A(u_{k,\alpha})=c_{k,\alpha}$,  $\mu_{k,\alpha}\neq 0$, $\mu_{k,\alpha}\to 0$, and $u_{k,\alpha}\rightharpoonup 0$ in $X$.
\end{Theorem}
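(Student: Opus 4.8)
The statement is the abstract Ljusternik--Schnirelmann theorem for nonlinear eigenvalue problems, and the plan is to realise the numbers $c_{k,\alpha}$ as critical values of the restriction $A|_{M_\alpha}$ and to extract the eigenpairs $(\mu_{k,\alpha},u_{k,\alpha})$ from the Lagrange multiplier rule. First I would record the standard properties of the Krasnoselskii genus $\gamma$: monotonicity under inclusion, subadditivity, invariance under odd homeomorphisms, and the fact that a compact symmetric set avoiding the origin admits a symmetric neighbourhood of the same genus. From the inclusion $\mathcal{C}_{k+1}\subseteq\mathcal{C}_k$ the monotonicity $c_{k,\alpha}\ge c_{k+1,\alpha}$ is immediate; boundedness of $M_\alpha$ in $(h_1)$ together with the continuity of $A$ gives $c_{1,\alpha}<+\infty$, and the constraint $A>0$ imposed on admissible sets yields $c_{k,\alpha}\ge0$, which produces the chain $+\infty>c_{1,\alpha}\ge c_{2,\alpha}\ge\cdots\ge0$.

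The heart of the argument is to show that every positive $c_{k,\alpha}$ is a critical value of $A|_{M_\alpha}$, and here the compactness hypotheses $(h_2)$ and $(h_3)$ enter. I would first verify a Palais--Smale condition on the bounded manifold $M_\alpha$: if $u_n\in M_\alpha$ with $A(u_n)$ bounded and the constrained derivative $A'(u_n)-\mu_n B'(u_n)\to0$, then reflexivity gives $u_n\rightharpoonup u$ along a subsequence, strong continuity of $A'$ in $(h_2)$ gives $A'(u_n)\to A'(u)$, the multipliers $\mu_n=\langle A'(u_n),u_n\rangle/\langle B'(u_n),u_n\rangle$ stay controlled because the denominator is bounded below by $(h_4)$, so $B'(u_n)$ converges and the $(S_+)$-type implication of $(h_3)$ forces $u_n\to u$ strongly. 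With Palais--Smale in hand, a deformation lemma on $M_\alpha$ shows $c_{k,\alpha}$ is critical: were it a regular value, an odd, genus-preserving flow would push some $K\in\mathcal{C}_k$ into the sublevel set $\{A<c_{k,\alpha}-\varepsilon\}$, contradicting the definition of $c_{k,\alpha}$ as a supremum. At the resulting critical point $u_{k,\alpha}$ the Lagrange rule gives $A'(u_{k,\alpha})=\mu_{k,\alpha}B'(u_{k,\alpha})$ with $\mu_{k,\alpha}=\langle A'(u_{k,\alpha}),u_{k,\alpha}\rangle/\langle B'(u_{k,\alpha}),u_{k,\alpha}\rangle$; since $A(u_{k,\alpha})=c_{k,\alpha}>0$, the equivalence in $(h_2)$ forces $\langle A'(u_{k,\alpha}),u_{k,\alpha}\rangle\ne0$, so $\mu_{k,\alpha}\ne0$.

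It remains to pass to the limit $k\to\infty$. Since $A'$ is completely continuous, $A$ is weakly sequentially continuous, and the superlevel sets $\{u\in M_\alpha:A(u)\ge\varepsilon\}$ have finite genus for each $\varepsilon>0$; hence for $k$ larger than that genus no admissible $K$ can stay inside $\{A\ge\varepsilon\}$, which forces $c_{k,\alpha}\to0$. Then for any weak limit $u_{k,\alpha}\rightharpoonup w$ the weak continuity of $A$ gives $A(w)=\lim c_{k,\alpha}=0$, and the nondegeneracy in $(h_2)$ yields $w=0$, i.e. $u_{k,\alpha}\rightharpoonup0$. Finally, strong continuity of $A'$ together with $A'(0)=0$ (from evenness) gives $\langle A'(u_{k,\alpha}),u_{k,\alpha}\rangle\to0$, while $(h_4)$ keeps the denominator away from zero, so $\mu_{k,\alpha}\to0$. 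The main obstacle is the deformation step: one must construct a locally Lipschitz, odd pseudo-gradient field tangent to $M_\alpha$ whose flow simultaneously preserves the genus and strictly decreases $A$ off the critical levels, and it is precisely the Palais--Smale compactness distilled from $(h_2)$--$(h_3)$ that makes this flow well behaved.
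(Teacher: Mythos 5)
The paper itself contains \emph{no proof} of this statement: it is quoted in Appendix~B directly from \cite[Theorem 9.27]{motreanu2014topological} (it is the classical Browder--Zeidler Ljusternik--Schnirelmann eigenvalue theorem, cf.\ \cite{zeidler2013nonlinear}) and is used in Section~5 purely as a black box, so there is no internal argument to compare yours against. What you have written is a reconstruction of the standard proof from that literature --- genus-based minimax values, a Palais--Smale condition on the constraint set distilled from $(h_2)$--$(h_4)$, an odd deformation along $M_\alpha$, and the Lagrange multiplier rule --- and in outline it is the right one.

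Three of your steps, however, are stated in a form that would fail as written and need repair. (i) The bound $c_{1,\alpha}<+\infty$ does not follow from ``boundedness of $M_\alpha$ plus continuity of $A$'': in an infinite-dimensional space a $C^1$ functional can be unbounded on a bounded set. What gives the bound is that complete continuity of $A'$ in $(h_2)$ makes $A$ weakly sequentially continuous, and such a functional is bounded on bounded subsets of a reflexive space. (ii) In your Palais--Smale verification, $(h_4)$ bounds $\mu_n=\langle A'(u_n),u_n\rangle/\langle B'(u_n),u_n\rangle$ from above, not away from zero; if $\mu_n\to 0$ you cannot solve for $B'(u_n)$ and invoke $(h_3)$, so the implication ``multipliers controlled $\Rightarrow B'(u_n)$ converges'' breaks. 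The repair is a dichotomy: when $\mu_n\to0$, boundedness of $B'$ gives $A'(u_n)\to 0$, strong continuity gives $A'(u)=0$, Mazur's theorem places the weak limit $u$ in the closed convex hull of $M_\alpha$, and the nondegeneracy in $(h_2)$ forces $u=0$ and the level to be $0$; hence Palais--Smale holds at every \emph{positive} level, which is all the deformation step needs, but this case analysis must be made explicit. (iii) The claim that each superlevel set $\{u\in M_\alpha\colon A(u)\geq\varepsilon\}$ has finite genus --- which is what drives $c_{k,\alpha}\to 0$, $\mu_{k,\alpha}\to 0$ and $u_{k,\alpha}\rightharpoonup 0$ --- is not a one-line consequence of complete continuity of $A'$; it is a genuine separate lemma (proved in \cite{zeidler2013nonlinear} by a finite-dimensional approximation argument), and in your sketch it is asserted rather than proved.
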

\section{ An abstract Eigenvalues Problem without $\Delta_{2}$}
\begin{Proposition}\label{KER}\cite{zeidler2013nonlinear}
	Assume that the following two conditions hold:\\
	(i) $X$ and $Y$ are $B$-spaces over $\mathbb{K}$, where $\mathbb{K}=\mathbb{R}$ or $\mathbb{C}$.\\
	(ii) $A: X \rightarrow Y$ and $B: X \rightarrow \mathbb{K}$ are continuous linear operators and $R(A)$ is closed.\\
	Then if
	$$
	B h=0 \text { for all } h \in X \text { such that } A h=0
	$$
	holds, there exists $a \Lambda \in Y^*$ such that
	$$
	\lambda_0 B k+\Lambda(A k)=0 \quad \text { for all } k \in X,
	$$
	with $\lambda_0=1$. For $R(A)=Y, \Lambda$ is unique.
\end{Proposition}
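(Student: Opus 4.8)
The plan is to factor the functional $B$ through the operator $A$ and then invoke the Hahn--Banach theorem. The hypothesis that $Bh=0$ whenever $Ah=0$ (i.e.\ $\ker A\subseteq\ker B$) is what makes the factorization well defined, while the closed-range assumption supplies exactly the continuity needed before the extension step.

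First I would define a scalar functional $\Lambda_0$ on the range $R(A)$ by setting
$$
\Lambda_0(Ax):=-Bx,\qquad x\in X.
$$
This is well defined: if $Ax_1=Ax_2$, then $A(x_1-x_2)=0$, so $x_1-x_2\in\ker A\subseteq\ker B$, whence $Bx_1=Bx_2$. Linearity of $\Lambda_0$ on $R(A)$ follows at once from the linearity of $A$ and $B$.

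The hard part will be showing that $\Lambda_0$ is bounded on $R(A)$, and this is precisely where the closed-range hypothesis enters. Since $A$ is continuous, $\ker A$ is closed, so the quotient $X/\ker A$ is a Banach space and $A$ induces an injective continuous linear map $\bar A\colon X/\ker A\to Y$ with range $R(A)$. Because $R(A)$ is closed in $Y$, it is itself a Banach space, and $\bar A\colon X/\ker A\to R(A)$ is a continuous linear bijection between Banach spaces. By the open mapping (bounded inverse) theorem, $\bar A^{-1}$ is bounded; equivalently, there is a constant $C>0$ such that every $y\in R(A)$ admits a representative $x\in X$ with $Ax=y$ and $\|x\|_X\le C\|y\|_Y$. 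For any such $x$,
$$
|\Lambda_0(y)|=|Bx|\le\|B\|\,\|x\|_X\le C\,\|B\|\,\|y\|_Y,
$$
so $\Lambda_0$ is a bounded linear functional on the Banach space $R(A)$.

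Finally I would extend $\Lambda_0$ to a functional $\Lambda\in Y^*$ by the Hahn--Banach theorem, so that $\Lambda(Ax)=\Lambda_0(Ax)=-Bx$ for every $x\in X$; this yields $Bx+\Lambda(Ax)=0$ on all of $X$ with $\lambda_0=1$, as asserted. For the uniqueness claim, note that when $R(A)=Y$ no genuine extension is needed: the identity $\Lambda(Ax)=-Bx$ already prescribes $\Lambda$ on every point of $Y=R(A)$, so any admissible $\Lambda$ must coincide with $\Lambda_0$ and is therefore unique.
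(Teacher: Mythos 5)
Your proof is correct. The paper does not prove this proposition at all---it is quoted directly from Zeidler's book---and your argument (factor $B$ through the quotient $X/\ker A$, use the closed range together with the bounded inverse theorem to obtain boundedness of the induced functional on $R(A)$, then extend by Hahn--Banach) is precisely the standard proof of this Lagrange-multiplier lemma, so there is nothing to contrast. One microscopic point of wording: the bounded inverse theorem controls the quotient norm, $\inf_{z\in\ker A}\|x-z\|_X\le C\|y\|_Y$, and since this infimum need not be attained you can only guarantee a representative with $\|x\|_X\le C'\|y\|_Y$ for constants $C'>C$; replacing your $C$ by $2C$ (or $C+\varepsilon$) repairs the statement and changes nothing downstream.
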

\section*{Acknowledgement(s)}
We thank the referees for their times and comments
\section*{Disclosure statement}
No potential conflict of interest was reported by the author(s).

%

%

\end{document}